\numberwithin{equation}{section}
\newtheorem{theorem}{Theorem}[section]
\newtheorem{remark}[theorem]{Remark}
\newtheorem{lemma}[theorem]{Lemma}
\newtheorem{proposition}[theorem]{Proposition}
\subjclass[2010]{35J10; 35J20; 47A75; 65N30}
\keywords{Schr\"odinger equation, eigenvalue problem, secular equation, Rayleigh-Ritz method, augmented plane wave method}
\title[Lower bound for secular equations on divided regions]{Lower bound for solutions to the secular equation in a direct sum of the Sobolev spaces of divided regions}
\author{Sohei Ashida}
\begin{document}
\maketitle

\begin{abstract}
In this paper a lower bound for solutions to the secular equation of the Schr\"odinger equation with basis functions discontinuous on boundaries of divided regions is given. If the functions do not have the discontinuity, the bound reduces to that for the usual Rayleigh-Ritz method. Difference from the usual Rayleigh-Ritz method is bounded by the degree of discontinuity. The result can be regarded as a theoretical basis of the augmented plane wave method for the band structure calculations in solid state physics. The result would be useful also for other electronic eigenvalue problems in which the behavior of the eigenfunction near nuclei is very different from that in the interstitial region, because it allows us to use different basis functions in different regions in contrast with the usual Rayleigh-Ritz method in which we need to use basis functions with inappropriate behaviors in wide regions. The proof is based only on new general results about the Sobolev space, in particular, equivalence of two semi-norms concerned with discontinuity of the functions in a direct sum of the Sobolev spaces. Therefore, the result does not depend on specific forms of basis functions at all. The main ingredient of the proof is an estimate of elements of the orthogonal complement of $H^1_0(\Omega)$ by their boundary values based on a characterization of them as weak solutions to an elliptic equation on a region $\Omega$.
\end{abstract}

\section{Introduction}\label{firstsec}
In this paper we consider the eigenvalue problem (Schr\"odinger equation)
\begin{equation}\label{myeq1.1}
-\Delta u+Vu=Eu,
\end{equation}
for $u\in H^2(\mathbb R^n)$ or $u$ in a certain subspace of $H^2(D)$ on a region $D\subset \mathbb R^n, n\in\mathbb N$, where $\Delta$ is the Laplacian, $V$ is a measurable real-valued function on $\mathbb R^n$ or $D$, and $E$ is an eigenvalue. When $n=1$, exceptionally detailed information of the eigenfunction $u$ and the eigenvalue $E$ can be obtained by considering local solutions of \eqref{myeq1.1} on several intervals and connecting the solutions at the endpoints of the intervals (see e.g. \cite[Section 34]{Sch} and \cite{Ya}). Also for $n\geq 2$ the structures of eigenfunctions and eigenvalues are strongly affected by the local behavior of $V$. In particular, in electronic eigenvalue problems the structures of eigenfunctions of an electron in neighborhoods of nuclei are very different from those in the interstitial region. Therefore, it is desirable to develop methods to construct local solutions in several regions $\Omega_1,\dots,\Omega_N$ and connect the solutions at the boundaries $\Omega_{\alpha}\cap\Omega_{\beta},\ \alpha\neq\beta$ also in the case of $n\geq 2$. However, when $n\geq 2$, connection of local solutions at intersections $\partial\Omega_{\alpha}\cap\partial\Omega_{\beta},\ \alpha\neq \beta$ can not be achieved exactly in general. Actually, exact local solution itself is difficult to obtain in general when $n\geq 2$. Nevertheless, in physical situations we can speculate local basis functions that approximate the local true solutions by their linear combinations. Thus it is expected to obtain approximations of the true eigenfunctions by the linear combinations of the basis functions $w^i_{\alpha},\ i=1,2,\dots$ in different regions $\Omega_{\alpha},\ \alpha=1,\dots,N$. These linear combinations can not be connected exactly on the boundaries $\Omega_{\alpha}\cap\Omega_{\beta},\ \alpha\neq\beta$ as stated above. Instead, in practical calculations a feasible method would be to construct linearly independent functions $u^i\in \bigoplus_{\alpha=1}^NH^2(\Omega_{\alpha}),\ i=1,\dots M$ 
that are linear combinations of the basis functions $w^i_{\alpha}$ on each $\Omega_{\alpha}$ and have only small discontinuity on the boundaries $\Omega_{\alpha}\cap\Omega_{\beta}$ first, and then solve the secular equation given by these functions $u^i$ as in the Rayleigh-Ritz method (see e.g. \cite[Theorem XIII.3]{RS4}) to obtain the energies $\tilde E_m=\tilde E_m(u^1,\dots,u^M),\ m=1,\dots,M$.

Here let us recall the secular equation and the Rayleigh-Ritz method. Let $X$ be a Hilbert space, $A$ be a semibounded selfadjoint operator in $X$, $q$ be the quadratic form associated with $A$ (see the appendix), and $Q\subset X$ be the form domain of $q$. For linearly independent elements $\{u^1,\dots,u^M\}\in Q$ in $X$ we set $h_{ij}:=q(u^i,u^j)$ and $s_{ij}:=\langle u^i,u^j\rangle$. The secular equation for $E\in \mathbb R$ is defined by 
$$\mathrm{det}\, (H-ES)=0,$$
where $H=(h_{ij}),\ S=(s_{ij})$. In particular, if $\{u^i\}$ is orthonormal, $S$ is a unit matrix and $E$ is an eigenvalue of $H$. Let $\tilde E_1,\dots,\tilde E_M\in\mathbb R$ be solutions to the secular equation and $E_1,\dots,E_M\in\mathbb R$ be the $M$ lowest eigenvalues of $A$. Then by the min-max principle (cf. \cite[Theorem XIII.1, 2]{RS4}) we have
\begin{equation}\label{myeq1.1.0.0.1}
\tilde E_m\geq E_m,\ m=1,\dots,M.
\end{equation}
The evaluation of eigenvalues $E_m$ based on this inequality is called the Rayleigh-Ritz method.

For the Schr\"odinger equation the quadratic form is $\langle\nabla u,\nabla v\rangle+\langle u,Vv\rangle$ and the form domain is $H^1(\mathbb R^n)$ or a subspace of $H^1(D)$, where $\langle\cdot,\cdot\rangle$ is the usual inner product in $L^2(\mathbb R^n)$ or $L^2(D)$ and $\langle\nabla u,\nabla v\rangle=\sum_{j=1}^n\langle\partial_{x_j} u,\partial_{x_j} v\rangle$. However, when we use the method in the first paragraph, because of the discontinuity on $\partial\Omega_{\alpha}\cap\partial\Omega_{\beta}$ we have $u^i\notin H^1(\mathbb R^n)$ or $u^i\notin H^1(D)$. Therefore, although the energy $\tilde E_m$ is a solution of the formal secular equation, the min-max principle can not be applied and \eqref{myeq1.1.0.0.1} does not hold. This means that the eigenvalues obtained by the secular equation are not upper bounds of the true eigenvalues, and that $\inf_{\{u^1,\dots,u^M\}}\tilde E_m(u^1,\dots,u^M),\ m=1,\dots,M$ do not coincide with the true eigenvalues in contrast to the Rayleigh-Ritz method. Recall that decrease of the value $\tilde E_m$ obtained from the secular equation in the usual Rayleigh-Ritz method means that the value has been improved, because it is an upper bound.

An existing practical method that can be regarded as an implementation of the idea in the first paragraph is the augmented plane wave (APW) method (see e.g. \cite{GP}) proposed by Slater \cite{Sl} in which the problem as above arises. The APW method is one of the methods of band structure calculations in solid state physics, and its purpose is to obtain eigenvalues of the Bloch functions. The APW method is preferred, because the method converges rapidly and it is independent of the degree of localization of electrons (see e.g. \cite{GP}). There is a huge amount of physical literature about the APW method, but the method is rarely studied from mathematically rigorous theoretical standpoint. 
Even the quadratic form associated with the selfadjoint realization of $-\Delta+V$ in the unit cell can not be found in the literature. As for the mathematical and theoretical convergence analysis of the algorithm, the author is aware of only one result by Chen and Schneider \cite{CS} that proves existence of a convergent subsequence of approximate solutions as the basis set increases and an error estimate for the subsequence (In \cite{CS} the Bloch function of the case $k=0$ in the notation below is considered). Let us consider the Bloch functions in the  periodic lattice with the primitive translation vectors $a_1,\dots,a_n$. The Bloch function is a generalized eigenfunction of $-\Delta+V$ (i.e. a solution to $(-\Delta+V)u=Eu$ with some $E\in\mathbb R$ in the sense of distribution) satisfying the condition that there exists some $k\in \mathbb R^n$ such that $u(x+T)=e^{ik\cdot T}u(x)$ for any $T\in \mathbb R^n$ written as $T=l_1a_1+\dotsm+l_na_n,\ l_1,\dots,l_n\in \mathbb Z$. Under a mild assumption on the potential, we can see that the Bloch function satisfies $u\in H^2_{\mathrm{loc}}(\mathbb R^n)$ and if it is restricted to the unit cell 
\begin{equation}\label{myeq1.1.0.1}
D:=\left\{x=\sum_{i=1}^nc_ia_i:\forall i,\ 0< c_i<1\right\},
\end{equation}
it becomes an eigenfunction of the selfadjoint operator $-\Delta+V$ in $L^2(D)$ with the domain
\begin{equation}\label{myeq1.1.1}
\begin{split}
\mathcal D_k:=\Bigg\{&u\in H^2(D):\forall j,\ u(x+a_j)=e^{ik\cdot a_j}u(x),\\
&(G_j\cdot\nabla) u(x+a_j)=e^{ik\cdot a_j}(G_j\cdot\nabla) u(x),\ \mathrm{for}\ x=\sum_{l\neq j}c_la_l,\ 0< c_l< 1\Bigg\},
\end{split}
\end{equation}
where $G_i\in\mathbb R^n$ is the reciprocal lattice vector such that $G_i\cdot a_j=2\pi\delta_{ij}$ (see the appendix). Here the boundary values are defined in the sense of the trace operator introduced later. The role of the Bloch function as in solid state physics does not seem to have been justified in mathematically rigorous way yet, although it is out of scope of this paper. Expansion of $L^2(\mathbb R^n)$ functions by the Bloch functions was proved by \cite{OK} for continuous potentials, although the proof is incomplete because the dependence of the Bloch functions on $k$ is not considered (cf. \cite[pp.151--152]{Wi} and \cite[pp.615--616]{BLP}). A complete proof owing to \cite{Ge} of the expansion is given in \cite[Theorem XIII.98]{RS4} (see also \cite[Chap.4, Theorem 3.1]{BLP}) under a certain mild assumption on the potential. For the holomorphic dependence of the Bloch functions on $k$ for $L^2$ potentials in the three-dimensional unit cell see \cite{Wi}.

In the APW method we divide $D$ into regions as $D=\Omega_1\cup\left(\bigcup_{\alpha=2}^N\overline \Omega_{\alpha}\right)$,
\begin{align*}
\Omega_{\alpha}&:=\{x:|x-\bar x_{\alpha}|<R_{\alpha}\},\ \alpha=2,\dots,N,\\
\Omega_1&:=D\setminus\left(\bigcup_{\alpha=2}^N\overline\Omega_{\alpha}\right),
\end{align*}
where $\bar x_{\alpha},\ \alpha=2,\dots,N$ are atomic sites and $R_{\alpha}>0$ is small enough so that $\overline\Omega_{\alpha}\subset D$ and $\overline\Omega_{\alpha}\cap\overline\Omega_{\beta}=\emptyset,\ 2\leq\alpha,\beta\leq N,\ \alpha\neq\beta$ (see Figure \ref{fig4}). 
\begin{figure}[h]
\centering
\includegraphics[width=0.8\textwidth, trim={3cm 4.5cm 3cm 1cm}]{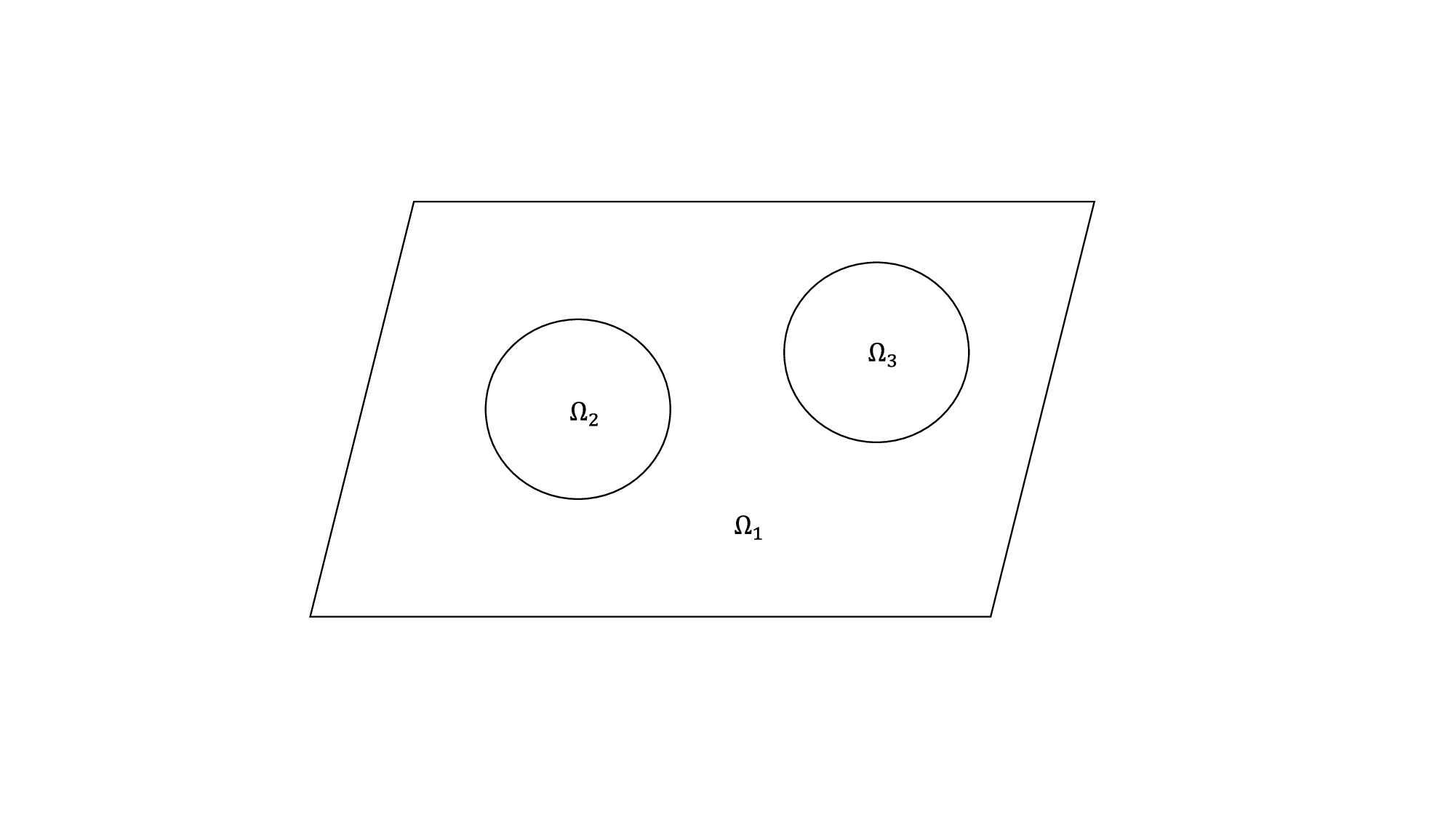}
\caption{Regions in the unit cell}\label{fig4}
\end{figure}
In the usual APW method we assume that the potential has the form
$$V(x)=\begin{cases}
V(|x-\bar x_{\alpha}|),\ &x\in\Omega_{\alpha},\ \alpha\neq 1 \\
0 & x\in\Omega_1
\end{cases},$$
that is, $V$ is spherically symmetric in $\Omega_{\alpha},\ 2\leq \alpha\leq N$. Such a potential is called Muffin-Tin potential. In $\Omega_1$ the eigenfunction $u$ is given by the linear combination of the plane waves $v_G(x)=e^{i(k+G)\cdot x}$, where $G$ is a reciprocal lattice vector i.e. $\forall i,\ G\cdot a_i/(2\pi)\in\mathbb Z$. Note that these functions $v_G(x)$ are local solutions, but they have discrete energies $|k+G|^2$ and we have not constructed local solutions for arbitrary energy $E$. On the other hand, in $\Omega_{\alpha},\ \alpha\geq 2$, we consider the linear combination
$$v_G(x)=e^{i(k+G)\cdot \bar x_{\alpha}}\sum_{l=0}^{\infty}\sum_{m=-l}^lA^{\alpha,k+G}_{lm}\chi^{\alpha}_l(r_{\alpha},E)Y_{lm}(\theta_{\alpha},\varphi_{\alpha}),$$
where $(r_{\alpha},\theta_{\alpha},\varphi_{\alpha})$ is the polar coordinates of $x-\bar x_{\alpha}$, $\chi^{\alpha}_l(r_{\alpha},E)$ is a radial function depending on the energy $E$, $Y_{lm}$ is the spherical harmonics and $A^{\alpha,k+G}_{lm}$ is a constant. $A^{\alpha,k+G}_{lm}$ are determined so that the right hand side conincides with $e^{i(k+G)\cdot x}$ on the boundaries $\partial\Omega_{\alpha}$. In order to determine $A^{\alpha,k+G}_{lm}$ we use the well-known expansion of the plane wave $e^{i(k+G)\cdot x}$ with respect to the center $\bar x_{\alpha}$:
$$e^{i(k+G)\cdot x}=4\pi e^{i(k+G)\cdot \bar x_{\alpha}}\sum_{lm}i^lj_l(r'r_{\alpha})Y^*_{lm}(\theta',\varphi')Y_{lm}(\theta_{\alpha},\varphi_{\alpha}),$$
where $(r_{\alpha},\theta_{\alpha},\varphi_{\alpha})$ and $(r',\theta',\varphi')$ are polar coordinates of $x-\bar x_{\alpha}$ and $k+G$ respectively, and $j_l$ is the spherical Bessel function. From the condition that this expression coincides with the expression of $v_G(x)$ inside $\Omega_{\alpha}$ on the boundary $\partial\Omega_{\alpha}$ it follows that
$$A^{\alpha,k+G}_{lm}=4\pi i^l\frac{j_l(r'R_{\alpha})Y^*_{lm}(\theta',\varphi')}{\chi^{\alpha}_l(R_{\alpha},E)},\quad l=0,1,2,\dots,\quad  m=-l,\dots,l.$$
The eigenfunction $u$ is obtained as a linear combination of $u^i=v_{G_i}$ for different reciprocal lattice vectors $G_i$. Note that $u^i\in \bigoplus_{\alpha=1}^NH^2(\Omega_{\alpha})$ but $u^i\notin H^2(D)$ (cf. Proposition \ref{conectionpro}). Thus they are not eigenfunctions in $H^2(D)$. Actually, the energies of $u^i$ in $\Omega_{\alpha},\ \alpha\geq 2$ and in $\Omega_1$ are different. Note also that the components of $u^i$ in $\Omega_{\alpha},\ \alpha\geq 2$ themselves depend on the energy $E$ through $\chi^{\alpha}_l(r_{\alpha},E)$ and $A^{\alpha,k+G}_{lm}$. We consider the secular equation of such waves $\{u^1,\dots,u^M\}$ and the eigenvalue $E$ is determined from the condition that $E$ is a solution of the equation. Once $E$ is determined, since it is satisfying the secular equation for $u^i$, if $u^i$ were in $H^1(D)$, $E$ would be an upper bound of the true eigenvalue by the min-max principle. However, in practice the infinitely many coefficients $A^{\alpha,k+G}_{lm}$ can not be calculated, and we need to cut off the expansion at some finite number of coefficients. Thus $u^i$ are discontinuous on the boundaries $\partial\Omega_{\alpha}$, and the problem as in the third paragraph arises.

In the theory of finite element method there is a method to use functions discontinuous on boundaries of the elements. Such a method is called nonconforming finite element method (cf. \cite[section 10.3]{BS}). There are also other methods such as discontinuous Galerkin (DG) method (cf. \cite{PE}). In some cases error estimates were obtained for nonconforming method and DG method. There is also an error estimate of a DG method in the same situation as that of the APW method (cf. \cite{LC}, actually, rather different methods are called by the same term "discontinuous Galerkin method"). In these analysis specific forms of basis functions play crucial roles.

There would be other problems in which basis functions in divided regions are useful. When we seek critical points of the quadratic form
$$q(u,u):=\langle \nabla u,\nabla u\rangle+\langle u,Vu\rangle,$$
by discretization methods such as linear combination of atomic orbitals (LCAO) method, we need to consider linear combinations of basis functions defined on the whole space $\mathbb R^{n}$. However, these functions are chosen so that they exhibit appropriate behaviors as approximate solutions to the Schr\"odinger equation only in a small region such as a neighborhood of one of nuclear positions. (An example is the Slater type orbital centered at the nucleus.) These functions would exhibit inappropriate behaviors in the other regions. When we use the linear combination of these functions, the functions must contribute in these inappropriate regions with the same coefficient of the linear combination as that in the appropriate small region. This would make the approximation poor. On the other hand, if we consider $q(u,u)$ on $\bigoplus_{\alpha=1}^NH^2(\Omega_{\alpha})$, we can choose different basis functions appropriate for different $\Omega_{\alpha}$ which would be able to highly approximate the solution to the Schr\"odinger equation in the region.
Therefore, it would be very useful if there exists a method to use different basis sets in different divided regions and to estimate the error due to the discontinuity of functions on the intersections $\partial \Omega_{\alpha}\cap \partial\Omega_{\beta},\ \alpha\neq \beta$ of the boundaries of the regions.

The purpose of this paper is to estimate the effect of the discontinuity of functions on the boundaries to the solutions of the secular equation not only in the APW method but also in general eigenvalue problems. Here note that the basis functions in the APW method work only for the special setting. They are plane waves and products of spherical harmonics and radial functions centered at atomic sites. They work for the Muffin-Tin potential, but for general potentials they would not be good choices for the basis functions. Therefore, for the present purpose we need to develop a theory that does not depend on specific structures of the basis functions. The result in this paper is not a convergence analysis. Rather, it is a lower bound for the solutions to the secular equation that allows discontinuity on the boundaries of regions, and it gives an apriori estimate of the difference between the infimum of the solutions to the secular equation as the basis functions change and the true eigenvalue. The result can also be used for a posteriori lower bounds for the solutions. The proof of the main result is based on general results about the Sobolev space obtained in this paper, i.e. Proposition \ref{conectionpro}, Lemmas \ref{orthogonallem}, \ref{decompositionlem}, \ref{decompositionlem2} and in particular,Theorem \ref{distthm} which means that the semi-norm defined by the degree of discontinuity and  that defined by the distance from the Sobolev space in the whole region are equivalent. Thus the main result does not depend on specific forms of basis functions at all. The result can also be regarded as an upper bound of the true eigenvalue, and it should be mentioned that accurate lower bounds to the true eigenvalues can not be obtained, whatever method is chosen even for rather simple systems such as small molecules at present (for methods for accurate lower bounds to eigenvalues see e.g. \cite{We,BG,GHT}). 

The contents of this paper is as follows. In Section \ref{firstsec2} the main results are stated. Several general results are prepared in Section \ref{secondsec}. The main results are proved in Section \ref{thirdsec}. In the appendix we deal with selfadjointness, quadratic forms associated with selfadjoint operators and regularity of the Bloch function.

\section{Main results}\label{firstsec2}
Let us introduce some definitions and assumptions. For a region $\Omega\subset\mathbb R^{n}$ and a natural number $l\geq 1$ we define the Sobolev space $H^l(\Omega)$ by $H^l(\Omega):=\{u\in L^2(\Omega): D^{\gamma}u\in L^2(\Omega)\ \mathrm{for}\ |\gamma|\leq l\}$ which is a Hilbert space with respect to the inner product $(u,v)_l:=\sum_{|\gamma|\leq l}\langle D^{\gamma}u,D^{\gamma}v\rangle_{\Omega}$, where for a multi-index $\gamma=(\gamma_1,\dots,\gamma_{n})\in\mathbb N^{n}$ we define $D^{\gamma}u:=\partial_{x_1}^{\gamma_1}\dotsm\partial_{x_{n}}^{\gamma_{n}}u$ and $\langle \cdot,\cdot\rangle_{\Omega}$ is the usual inner product in $L^2(\Omega)$. We also define for a $C^l$ manifold $\mathcal M$ and a real number $0\leq s\leq l$ the Sobolev space $H^s(\mathcal M)$ using a partition of unity  (cf. \cite{Wl}). For a natural number $l\geq 1$ and a region $\Omega\subset \mathbb R^{n}$ with a bounded boundary (i.e. the boundary is a bounded set as a subset of $\mathbb R^{n}$) of class $C^l$ (see e.g. \cite[Section 6.2]{GT}) there exists a continuous linear operator $T_{l-1}:H^l(\Omega)\to \bigoplus_{j=0}^{l-1}H^{l-j-1/2}(\partial\Omega)$ called a trace operator with the property
$$T_{l-1}u=\left(u|_{\partial\Omega},\dots,\frac{\partial^{l-1}u}{\partial \nu^{l-1}}\bigg|_{\partial\Omega}\right),$$
for $u\in C^l(\overline\Omega)$ (cf. \cite{Wl}), where $u|_{\partial\Omega}$ is the usual boundary value and $\frac{\partial^{j}u}{\partial \nu^{j}}$ is the $j$th derivative in the direction of the inward pointing normal. Note that if $u\in H^l(\Omega),\ l\geq 1$, we have $T_0u\in H^{l-1/2}(\partial\Omega)$ because it is the first component of $T_{l-1}u$. Even for $u\in H^l(\Omega)$ let us denote $T_0u\in H^{l-1/2}(\partial\Omega)$ also by $u|_{\partial \Omega}$, and we sometimes use the notation $u(x)$ for the value of $u|_{\partial\Omega}$ at $x\in \partial\Omega$ (cf. definition of $\mathcal D_k$ in \eqref{myeq1.1.1}).

First we consider the eigenvalue problem in $\mathbb R^n$. We suppose that regions $\Omega_{\alpha}\subset \mathbb R^{n},\ \alpha=1,\dots,N$ satisfy the assumption 
\begin{itemize}
\item[(A)] $\Omega_{\alpha}\cap\Omega_{\beta}=\emptyset, \alpha\neq \beta,\ \bigcup_{\alpha=1}^N\overline{\Omega}_{\alpha}=\mathbb R^{n},\ \partial\Omega_{\alpha}$ is of class $C^3$ and bounded, and each $\partial\Omega_{\alpha}\cap\partial\Omega_{\beta}, \alpha\neq \beta$ is a connected component of $\partial\Omega_{\alpha}$ or the empty set (cf. Figure \ref{omegaarr}).
\end{itemize}
\begin{figure}[h]
\centering
\includegraphics[width=0.8\textwidth, trim={0 2.5cm 2cm 0}]{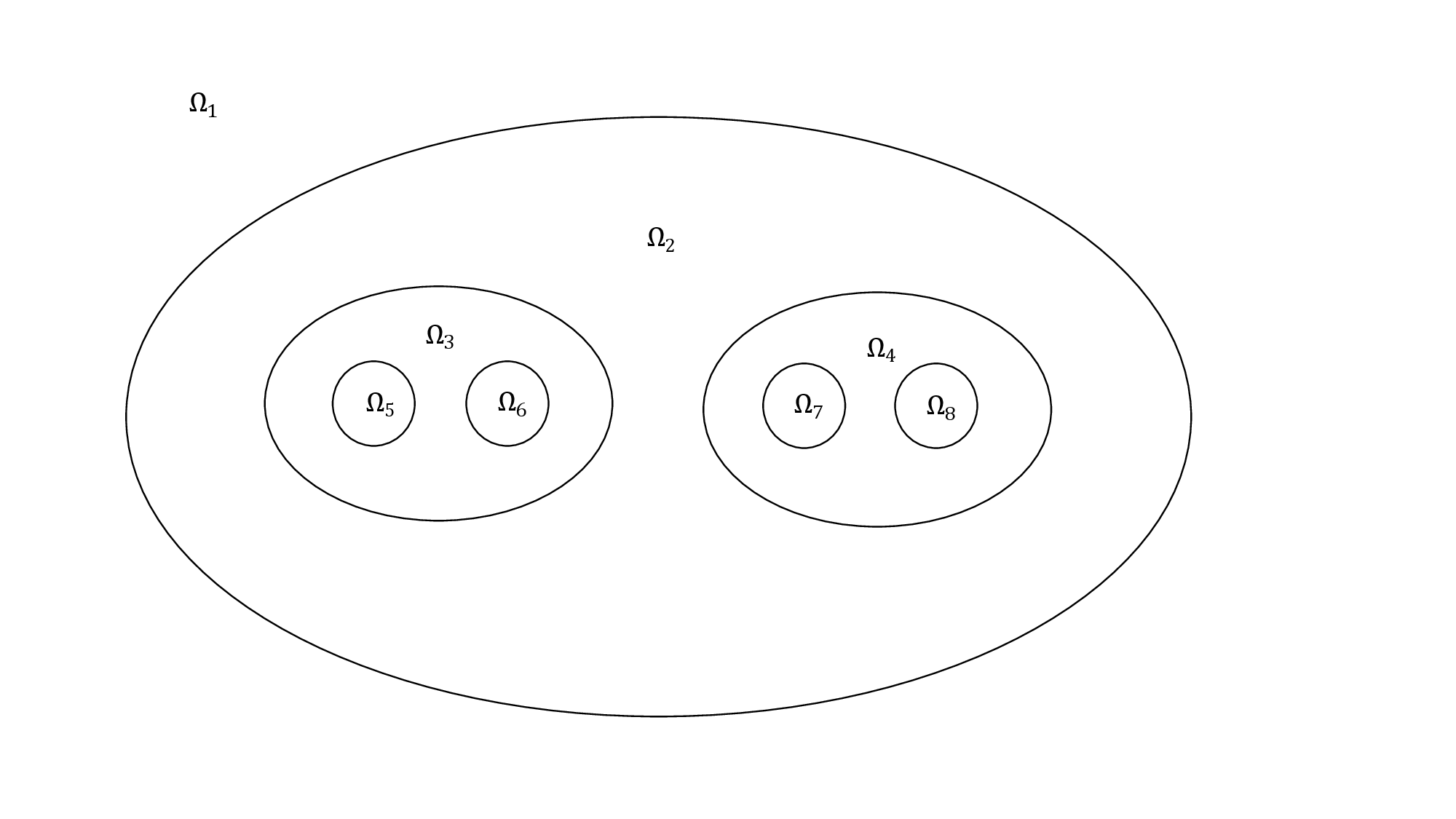}
\caption{Example of the arrangement of regions}\label{omegaarr}
\end{figure}
From $u\in H^2(\mathbb R^n)$ we can construct $(\tilde u_1,\dots,\tilde u_N)\in \bigoplus_{\alpha=1}^NH^2(\Omega_{\alpha})$ by
$\tilde u_{\alpha}(x)=u(x),\ x\in\Omega_{\alpha}$. Conversely, for $(\tilde u_1,\dots,\tilde u_N)\in \bigoplus_{\alpha=1}^NH^2(\Omega_{\alpha})$ we define $u\in L^2(\mathbb R^n)$ by
$u(x)=\tilde u_{\alpha}(x),\ x\in \Omega_{\alpha}$. (Note that the Lebesgue measure of $\partial\Omega_{\alpha}$ is $0$, so that the values of $u$ on $\partial\Omega_{\alpha}$ do not matter.) In this sense we have
$$H^2(\mathbb R^n)\subsetneq\bigoplus_{\alpha=1}^NH^2(\Omega_{\alpha})\subsetneq L^2(\mathbb R^n).$$
For the potential we assume
\begin{itemize}
\item[(B)] 
There exist $0\leq a<1$ and $b\geq 0$ such that for any $\tilde u_{\alpha}\in H^1(\Omega_{\alpha})$
\begin{equation*}\label{myeqf2.1}
\langle \tilde u_{\alpha},|V|\tilde u_{\alpha}\rangle_{\Omega_{\alpha}}\leq a\lVert \nabla\tilde u_{\alpha}\rVert_{L^2(\Omega_{\alpha})}^2+b\lVert \tilde u_{\alpha}\rVert_{L^2(\Omega_{\alpha})}^2,\quad \alpha=1,\dots N,
\end{equation*}
and for any $u\in H^2(\mathbb R^n)$
\begin{equation*}\label{myeqf2.2}
\lVert Vu\rVert_{L^2(\mathbb R^n)}\leq a\lVert\Delta u\rVert_{L^2(\mathbb R^n)}+b\lVert u\rVert_{L^2(\mathbb R^n)}.
\end{equation*}
\end{itemize}
The assumption (B) is satisfied e.g. if $V$ is a sum of Coulomb potentials centered at nuclear positions. It follows from (B) that $-\Delta+V$ in $L^2(\mathbb R^n)$ is selfadjoint with the domain $H^2(\mathbb R^n)$. We assume that the eigenvalues of $-\Delta+V$ are bounded from above by a constant $d$ as follows.
\begin{itemize}
\item[(C)] There exist $M\in\mathbb N$ isolated eigenvalues $E_1,\dots, E_M$ of the selfadjoint operator $-\Delta+V$ in $L^2(\mathbb R^n)$ with the domain $H^2(\mathbb R^n)$ in ascending order at the bottom of the spectrum, and there exists a constant $d\geq0$ such that $E_m\leq d,\ m=1,\dots,M$.
\end{itemize}
We denote $\sum_{\alpha=1}^N\langle \tilde u_{\alpha},\tilde v_{\alpha}\rangle_{\Omega_{\alpha}}$ also by $\langle u,v\rangle$ for $u=(\tilde u_1,\dots,\tilde u_N), v=(\tilde v_1,\dots,\tilde v_N)\in \bigoplus_{\alpha=1}^NL^2(\Omega_{\alpha})$. Let $u^i=(\tilde u_1^i,\dots,\tilde u_N^i)\in \bigoplus_{\alpha=1}^NH^2(\Omega_{\alpha}),\ i=1,\dots,M$ be tuples of functions such that $\langle u^i,u^j\rangle=\delta_{ij}$.
We define an $M\times M$ matrix $H=(h_{ij})$ by $h_{ij}:=\langle \nabla u^i,\nabla u^j\rangle+\langle u^i,V u^j\rangle$, where $\nabla u^i:=(\nabla\tilde u^i_1,\dots,\nabla\tilde u^i_N)$, $\langle \nabla u^i,\nabla u^j\rangle:=\sum_{\alpha=1}^N\langle \nabla\tilde u^i_{\alpha},\nabla\tilde u^j_{\alpha}\rangle_{\Omega_{\alpha}}$ and $\langle \nabla\tilde u_{\alpha},\nabla\tilde v_{\alpha}\rangle_{\Omega_{\alpha}}:=\sum_{l=1}^n\langle \partial_{x_l}\tilde u_{\alpha},\partial_{x_l}\tilde v_{\alpha}\rangle_{\Omega_{\alpha}}$. Let $\tilde E_m=\tilde E_m(u^1,\newline\dots,u^M)$ be the $m$th eigenvalue of $H$ in ascending order. The following is a main result.

\begin{theorem}\label{mainthm}
Under the assumptions (A), (B) and (C) there exist constants $C,\delta>0$ depending only on $\{\Omega_{\alpha}\},\ a,\ b$ and $d$ such that if
\begin{equation*}
M\sum_{i=1}^M\sum_{\alpha\neq \beta}\lVert \tilde u^i_{\alpha}|_{\partial{\Omega_{\alpha}}}-\tilde u^i_{\beta}|_{\partial{\Omega_{\beta}}}\rVert_{H^{3/2}(\partial\Omega_{\alpha}\cap\partial\Omega_{\beta})}<\delta,
\end{equation*}
we have
\begin{align*}
&\tilde E_m\geq E_m-CM^{5/2}\sum_{i=1}^M\sum_{\alpha\neq \beta}\lVert \tilde u^i_{\alpha}|_{\partial{\Omega_{\alpha}}}-\tilde u^i_{\beta}|_{\partial{\Omega_{\beta}}}\rVert_{H^{3/2}(\partial\Omega_{\alpha}\cap\partial\Omega_{\beta})},
\end{align*}
for any $1\leq m\leq M$.
\end{theorem}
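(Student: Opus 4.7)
The plan is to replace each basis vector $u^i$ by an approximant $\hat u^i$ in the form domain $Q = H^1(\mathbb R^n)$, apply the min--max principle to the $M$-dimensional subspace spanned by the $\hat u^i$, and then compare the resulting Ritz values to the $\tilde E_m$ by a perturbation estimate. The key input is Theorem \ref{distthm}, which identifies the discontinuity semi-norm of an element of $\bigoplus_\alpha H^2(\Omega_\alpha)$ with its distance to $H^1(\mathbb R^n)$. Applying it to each $u^i$ produces $\hat u^i \in H^1(\mathbb R^n)$ with
$$\lVert u^i - \hat u^i \rVert_{\bigoplus_\alpha H^1(\Omega_\alpha)} \leq C \eta_i, \qquad \eta_i := \sum_{\alpha\neq\beta} \lVert \tilde u^i_\alpha|_{\partial\Omega_\alpha} - \tilde u^i_\beta|_{\partial\Omega_\beta} \rVert_{H^{3/2}(\partial\Omega_\alpha\cap\partial\Omega_\beta)},$$
where $C$ depends only on the geometry of $\{\Omega_\alpha\}$. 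Set $\eta := \sum_i \eta_i$, so the hypothesis reads $M\eta < \delta$.

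I would then reduce to the case $\tilde E_m \leq d$, since $\tilde E_m > d \geq E_m$ is immediate otherwise. In that case the min--max description of $\tilde E_m$ furnishes an $m$-dimensional subspace $V^* \subset \mathrm{span}\{u^i\}$ on which $q(u,u) \leq d\,\lVert u\rVert^2$, so assumption (B) yields a uniform bound $\lVert u\rVert_{\bigoplus H^1} \leq K\lVert u\rVert$ with $K = K(a,b,d)$ for every $u\in V^*$. Let $\hat V \subset H^1(\mathbb R^n)$ denote the image of $V^*$ under the linear extension of $u^i \mapsto \hat u^i$. For $\eta$ small enough this map is injective so $\dim\hat V = m$, and the min--max principle for $-\Delta+V$ on $L^2(\mathbb R^n)$ gives
$$E_m \leq \max_{0\neq \hat u \in \hat V} \frac{q(\hat u, \hat u)}{\lVert \hat u \rVert^2}.$$
For $u = \sum_i c_i u^i \in V^*$ with $\lVert u \rVert = 1$ and $\hat u = \sum_i c_i \hat u^i$, a bilinear expansion combined with (B) yields
$$|q(\hat u,\hat u) - q(u,u)| + \bigl|\lVert \hat u\rVert^2 - \lVert u\rVert^2\bigr| \leq C\, \lVert u - \hat u\rVert_{\bigoplus H^1},$$
while $\lVert u - \hat u\rVert_{\bigoplus H^1} \leq \sum_i |c_i|\,C\eta_i$. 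Propagating these estimates into the Rayleigh quotient on the right-hand side above, and taking the maximum over $\hat u \in \hat V$, yields $E_m \leq \tilde E_m + CM^{5/2}\eta$, which rearranges to the claim.

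The principal obstacle will be the polynomial bookkeeping in $M$: the passage from coefficient-wise estimates to Ritz-matrix norms, together with the inversion of the Gram matrix $\hat S = (\langle \hat u^i,\hat u^j\rangle)$ needed to reduce the generalized eigenvalue problem for $(\hat H, \hat S)$ to an ordinary one, inflates $\eta$ by factors of $M^{1/2}$ and $M$ at several points and accounts for the final exponent $M^{5/2}$. The smallness condition $M\eta<\delta$ is precisely what keeps $\hat S$ in a neighborhood of the identity so that this inversion is legitimate and $\hat V$ remains an honest $m$-dimensional subspace. A secondary difficulty is securing the uniform a priori $H^1$-bound on the Ritz subspace, which is handled by the case distinction $\tilde E_m \gtrless d$ so that all constants ultimately depend only on $a,b,d$ and the geometry of $\{\Omega_\alpha\}$, independently of the particular basis.
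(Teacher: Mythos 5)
Your proposal follows essentially the same route as the paper: reduce to the case $\tilde E_m\leq d$ so that assumption (B) gives a uniform (broken) $H^1$ bound on the relevant Ritz vectors, use Theorem \ref{distthm} to replace each $u^i$ by a nearby element of $H^1(\mathbb R^n)\cap\bigl(\bigoplus_{\alpha}H^2(\Omega_{\alpha})\bigr)$, and then compare Ritz values by a perturbation argument before invoking the min--max principle. The only substantive difference is the final linear algebra. The paper first diagonalizes $H$ (which already inflates each discontinuity $\eta_i$ to $\sum_j\eta_j$), orthonormalizes the projected functions via a quantitative Gram--Schmidt lemma (Lemma \ref{Schmidt}), and applies Weyl's perturbation inequality for Hermitian matrices together with $\lVert\cdot\rVert_2\leq\sqrt M\lVert\cdot\rVert_{\infty}$; the factors $M$, $M$ and $\sqrt M$ picked up at these three stages are exactly what produce the exponent $5/2$. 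You instead take the image of the span of the first $m$ eigenvectors of $H$ and bound the Rayleigh quotient there directly, needing only that the perturbed Gram matrix stays near the identity so that the image is honestly $m$-dimensional. Carried out carefully, your formulation controls the error for a normalized combination $u=\sum_ic_iu^i$ by $\sum_i|c_i|\eta_i\leq\sum_i\eta_i$ uniformly over unit coefficient vectors, so it would in fact deliver the lower bound with a milder $M$-dependence than $M^{5/2}$; your own attribution of the exponent to Gram-matrix inversion and matrix-norm conversions is unnecessarily pessimistic, because the subspace form of the min--max principle never requires you to orthonormalize or to pass to operator norms of $M\times M$ matrices. Since the theorem only claims $M^{5/2}$, this discrepancy is harmless, and the argument as sketched is sound.
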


\begin{remark}\label{mainrem}
(1) The lower bound deteriorates as $M$ increases. However, after solving the secular equation, we obtain the corresponding linear combinations $\varphi^m=\sum_{i=1}^Mc_i^mu^i$ of the basis functions such that $\langle \nabla \varphi^m,\nabla \varphi^m\rangle+\langle \varphi^m,V\varphi^m\rangle=\tilde E_m$, where $(c^m_1,\dots,c_M^m)^T$ is an eigenvector of the matrix $H$. Thus to estimate $\tilde E_{m_0}$ for a fixed $m_0$ a posteriori, we have only to consider $\varphi^1,\dots,\varphi^{m_0}$, and therefore, the lower bound with $M$  and $u^i$ replaced by $m_0$ and $\varphi^i$ holds. In particular, for the a posteriori estimate of $\tilde E_1$, $M$ is always $1$.

(2) Even when $\{u^i\}$ is not orthonormalized, considering the orthonormalization we can see that the same form of estimate holds for the eigenvalues obtained by solving the secular equation. However, for the constant $\delta$ not to be very small and $C$ not to be very large, the discontinuity should not become large by the orthonormalization.

The same remarks hold also for Theorem \ref{mainthm2}.
\end{remark}

Theorem \ref{mainthm} means that the effect of discontinuity of $u^i$ on the boundaries $\partial\Omega_{\alpha}\cap\partial\Omega_{\beta}$ to $\tilde E_m$  is estimated  by the difference of the boundary values $\lVert \tilde u_{\alpha}^i|_{\partial{\Omega_{\alpha}}}-\tilde u_{\beta}^i|_{\partial{\Omega_{\beta}}}\rVert_{H^{3/2}(\partial\Omega_{\alpha}\cap\partial\Omega_{\beta})}$. The infimum of $\tilde E_m$ with respect to $\{u^1,\dots,u^M\}$ is also bounded from above as follows. By $H^2(\mathbb R^n)\subset \bigoplus_{\alpha=1}^NH^2(\Omega_{\alpha})$ choosing eigenfunctions of $-\Delta+V$ in $L^2(\mathbb R^n)$ as $u^1\dots,u^M$ we can see that
$$E_m\geq\inf_{\substack{u^1,\dots,u^M\in \bigoplus_{\alpha=1}^NH^2(\Omega_{\alpha})\\ \langle u^i,u^j\rangle=\delta_{ij}\\ \sum_{i,\alpha\neq \beta}\lVert \tilde u^i_{\alpha}|_{\partial{\Omega_{\alpha}}}-\tilde u^i_{\beta}|_{\partial{\Omega_{\beta}}}\rVert_{H^{3/2}}\leq\epsilon}}\tilde E_m(u^1,\dots,u^M),$$
for any $\epsilon\geq0$. If $\epsilon$ is small enough, combining this and Theorem \ref{mainthm} we obtain
$$E_m\geq\inf_{\substack{u^1,\dots,u^M\in \bigoplus_{\alpha=1}^NH^2(\Omega_{\alpha})\\ \langle u^i,u^j\rangle=\delta_{ij}\\ \sum_{i,\alpha\neq \beta}\lVert \tilde u^i_{\alpha}|_{\partial{\Omega_{\alpha}}}-\tilde u^i_{\beta}|_{\partial{\Omega_{\beta}}}\rVert_{H^{3/2}}\leq\epsilon}}\tilde E_m(u^1,\dots,u^M)\geq E_m-CM^{5/2}\epsilon,$$
which coincides with the principle of the Rayleigh-Ritz method when $\epsilon=0$. This means that even if $u^1,\dots,u^M$ are discontinuous on the boundaries, the solutions to the secular equation can be close to the true eigenvalue by lowering the value $\tilde E_m$ and making the degree of discontinuity $\epsilon$ small.

Next let us consider the case corresponding to the APW method. Let $D$ be the unit cell in \eqref{myeq1.1.0.1} and $k\in\mathbb R^n$ be fixed.
Instead of (A)--(C) we assume (A'), (B') and (C'), as follows.
\begin{itemize}
\item[(A')] $\Omega_{\alpha}\cap\Omega_{\beta}=\emptyset, \alpha\neq \beta,\ \bigcup_{\alpha=1}^N\overline{\Omega}_{\alpha}=\overline{D},\ \partial\Omega_{\alpha}\cap D$ is of class $C^3$, and each $\partial\Omega_{\alpha}\cap\partial\Omega_{\beta}, \alpha\neq \beta$ is a connected component of $\partial\Omega_{\alpha}$ or the empty set. 
\item[(B')] 
There exist $0\leq a<1$ and $b\geq 0$ such that for any $\tilde u_{\alpha}\in H^1(\Omega_{\alpha})$
\begin{equation*}\label{myeqf2.3}
\langle \tilde u_{\alpha},|V|\tilde u_{\alpha}\rangle_{\Omega_{\alpha}}\leq a\lVert \nabla\tilde u_{\alpha}\rVert_{L^2(\Omega_{\alpha})}^2+b\lVert \tilde u_{\alpha}\rVert_{L^2(\Omega_{\alpha})}^2,\quad \alpha=1,\dots N,
\end{equation*}
and for any $u\in \mathcal D_k$ 
\begin{equation*}\label{myeqf2.4}
\lVert Vu\rVert_{L^2(D)}\leq a\lVert\Delta u\rVert_{L^2(D)}+b\lVert u\rVert_{L^2(D)},
\end{equation*}
where $\mathcal D_k$ is defined by \eqref{myeq1.1.1}.
\end{itemize}
The condition (B') is satisfied e.g. if $V$ has only singularities like Coulomb potentials centered at nuclear positions. 
In the appendix we prove that $-\Delta+V$ with the domain $\mathcal D_k$ is a selfadjoint operator under the assumption (B'), so that the following assumption (C') is meaningful.
\begin{itemize}
\item[(C')] There exist $M\in\mathbb N$ isolated eigenvalues $E_1,\dots, E_M$ of the selfadjoint operator $-\Delta+V$ in $L^2(D)$ with the domain $\mathcal D_k$ in ascending order at the bottom of the spectrum, and there exists a constant $d\geq0$ such that $E_m\leq d,\ m=1,\dots,M$.
\end{itemize}
We may assume $\partial\Omega_1\cap\partial D\neq\emptyset$, that is, $\Omega_1$ is the outermost interstitial region. In the appendix we also prove that the operator $-\Delta+V$ on the domain $\mathcal D_k$ is the operator associated with the quadratic form $\langle \nabla u,\nabla u\rangle_D +\langle u,Vu\rangle_D$ with the form domain
\begin{equation*}
Q_k:=\Bigg\{u\in H^1(D): \forall j,\ u(x+a_j)=e^{ik\cdot a_j}u(x),\mathrm{for}\ x=\sum_{l\neq j}c_la_l,\ 0< c_l< 1\Bigg\}.
\end{equation*}
Correspondingly we define
\begin{align*}
\tilde Q_k:=\Bigg\{u=(\tilde u_1,\dots,\tilde u_N)\in \bigoplus_{\alpha=1}^NH^1(\Omega_{\alpha}): &\forall j,\ \tilde u_1(x+a_j)=e^{ik\cdot a_j}\tilde u_1(x),\\
&\mathrm{for}\ x=\sum_{l\neq j}c_la_l,\ 0< c_l< 1\Bigg\}.
\end{align*}
Let $u^i=(\tilde u_1^i,\dots,\tilde u_N^i)\in \tilde Q_k\cap(\bigoplus_{\alpha=1}^NH^2(\Omega_{\alpha})),\ i=1,\dots,M$ be tuples of functions such that $\langle u^i,u^j\rangle=\delta_{ij}$. Define an $M\times M$ matrix $H=(h_{ij})$ and $\tilde E_m,\ m=1,\dots,M$ in the same way as those stated above Theorem \ref{mainthm}. Under these settings we have the result corresponding to Theorem \ref{mainthm}.

\begin{theorem}\label{mainthm2}
Under the assumptions (A'), (B') and (C') there exist constants $C,\delta>0$ depending only on $\{\Omega_{\alpha}\},\ a,\ b$ and $d$ such that if
\begin{equation*}
M\sum_{i=1}^M\sum_{\alpha\neq \beta}\lVert \tilde u^i_{\alpha}|_{\partial{\Omega_{\alpha}}}-\tilde u^i_{\beta}|_{\partial{\Omega_{\beta}}}\rVert_{H^{3/2}(\partial\Omega_{\alpha}\cap\partial\Omega_{\beta})}<\delta,
\end{equation*}
we have
\begin{align*}
&\tilde E_m\geq E_m-CM^{5/2}\sum_{i=1}^M\sum_{\alpha\neq \beta}\lVert \tilde u^i_{\alpha}|_{\partial{\Omega_{\alpha}}}-\tilde u^i_{\beta}|_{\partial{\Omega_{\beta}}}\rVert_{H^{3/2}(\partial\Omega_{\alpha}\cap\partial\Omega_{\beta})},
\end{align*}
for any $1\leq m\leq M$.
\end{theorem}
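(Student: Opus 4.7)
The plan is to adapt the proof of Theorem \ref{mainthm} to the periodic setting, the only essential differences being that the ambient space is the unit cell $D$ with Bloch-type boundary conditions across $\partial D$, so that the form domain is $Q_k$ in place of $H^1(\mathbb R^n)$ and the operator domain is $\mathcal D_k$ in place of $H^2(\mathbb R^n)$. Since the form domain $Q_k$ is characterized by a boundary condition only on the first component $\tilde u_1$ in the direct sum decomposition, and the hypothesis $u^i\in\tilde Q_k$ already imposes that condition, all of the auxiliary general results in Section \ref{secondsec} (in particular Theorem \ref{distthm}) apply verbatim, with $\bigoplus_\alpha H^1(\Omega_\alpha)\cap\tilde Q_k$ and $Q_k$ replacing the corresponding $\mathbb R^n$ spaces.

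The core of the argument is to produce, for each $i=1,\dots,M$, an auxiliary function $\hat u^i\in Q_k$ that is close to $u^i$ in an $H^1$-type norm, with error controlled by the boundary discontinuity
\[
\epsilon_i:=\sum_{\alpha\neq\beta}\lVert\tilde u^i_\alpha|_{\partial\Omega_\alpha}-\tilde u^i_\beta|_{\partial\Omega_\beta}\rVert_{H^{3/2}(\partial\Omega_\alpha\cap\partial\Omega_\beta)}.
\]
By Theorem \ref{distthm} (equivalence of the discontinuity semi-norm and the distance semi-norm to $Q_k$), together with the decomposition lemmas, such $\hat u^i\in Q_k$ exists with $\lVert u^i-\hat u^i\rVert_{\oplus H^1}\leq C\epsilon_i$, where $C$ depends only on $\{\Omega_\alpha\}$. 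Using assumption (B') to control $V$-terms by $H^1$-norms, one deduces
\[
|\hat h_{ij}-h_{ij}|\leq C(a,b,d)(\epsilon_i+\epsilon_j),\qquad |\hat s_{ij}-\delta_{ij}|\leq C\sqrt{\epsilon_i\epsilon_j}.
\]
The a priori bound on $\lVert\nabla u^i\rVert$ needed to control the cross terms comes from the fact that $\tilde E_m$ are bounded in terms of $h_{ii}$, which, together with (C') and a bootstrap using $\delta$ small enough, keeps all quantities uniformly bounded in terms of $d$.

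Next, because $\hat u^i\in Q_k$, the generalized Rayleigh--Ritz principle (min-max on the form domain, cf.\ \cite[Theorem XIII.3]{RS4}) applies to the pencil $(\hat H,\hat S)$: its $m$-th smallest generalized eigenvalue $\hat E_m$ satisfies $\hat E_m\geq E_m$ for each $m=1,\dots,M$. It then remains to compare the eigenvalues of the pencil $(\hat H,\hat S)$ with those of $(H,I)$, which are $\tilde E_m$. Since $\hat S-I$ and $\hat H-H$ have operator norms bounded by $CM^{1/2}\epsilon$ (via Cauchy--Schwarz over the $M^2$ entries of size $O(\epsilon/\sqrt M)$ in the cross terms), and $\hat S$ is invertible for $\delta$ small, standard matrix perturbation (Weyl/Hoffman--Wielandt type estimates, combined with the bound $|\tilde E_m|\leq d+O(\epsilon)$) yields $|\hat E_m-\tilde E_m|\leq CM^{5/2}\sum_i\epsilon_i$; the extra powers of $M$ arise from bounding $\lVert\hat S^{-1}\rVert$ and from propagating entrywise $O(\epsilon)$ bounds through an $M\times M$ matrix. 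Combining with $\hat E_m\geq E_m$ yields the claimed inequality.

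The main obstacle will be tracking the precise power $M^{5/2}$: this requires that the construction of $\hat u^i$, and the subsequent matrix element estimates, depend linearly on $\epsilon_i$ (rather than, say, on $\sqrt{\epsilon_i}$) and that the perturbation of the generalized eigenvalue problem $(\hat H,\hat S)$ be handled without picking up extra powers of $M$ from $\hat S^{-1}$. A secondary technical point is ensuring that the constants $C,\delta$ depend only on $\{\Omega_\alpha\},a,b,d$ and not on $k$ or $M$; for this, one verifies that the Bloch phase enters only through the phase factors $e^{ik\cdot a_j}$, which have modulus one and therefore do not affect any of the $H^1$ or boundary norm estimates used in Theorem \ref{distthm}.
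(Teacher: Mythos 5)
Your proposal follows essentially the same route as the paper: reduce to the argument for Theorem \ref{mainthm}, replacing $H^1(\mathbb R^n)$ by $Q_k$ via the (A') case of Theorem \ref{distthm} (which rests on Lemma \ref{decompositionlem2}), and justifying the lower bound $\hat E_m\geq E_m$ by the appendix result that $-\Delta+V$ on $\mathcal D_k$ is the selfadjoint operator associated with the form $\langle\nabla u,\nabla u\rangle_D+\langle u,Vu\rangle_D$ on $Q_k$. The remaining differences are minor: you treat the generalized pencil $(\hat H,\hat S)$ where the paper explicitly orthonormalizes via Lemma \ref{Schmidt} (equivalent, and your overlap estimate should read $|\hat s_{ij}-\delta_{ij}|\leq C(\epsilon_i+\epsilon_j)$ rather than $C\sqrt{\epsilon_i\epsilon_j}$, which changes nothing downstream), and the a priori bound on $\lVert\nabla u^i\rVert$ is obtained by first diagonalizing $H$ and discarding the eigenvalues above $d$ (so that $h_{ii}=\tilde E_i\leq d$ and (B') yields the bound, at the cost of the factor $M$ in the discontinuity), not by ``bounding $\tilde E_m$ in terms of $h_{ii}$''.
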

This result can be regarded as a theoretical basis of the APW method, because it shows that the value $\tilde E_m$ given by the method can be close to the true eigenvalue $E_m$ with the basis functions $u^1,\dots,u^M$ that lower the value obtained by the method, if the discontinuity is small enough. (The discontinuity need to become smaller as $M$ increases, which is accomplished by increasing the basis functions in the regions near the atomic sites.) Recall also that for the a posteriori estimate of $\tilde E_{m_0}$ for some fixed $m_0$, $M$ can be replaced by $m_0$ (cf. Remark \ref{mainrem}. (1)). Here it is important to note the following. Since the basis functions in the APW method themselves depend on the parameter $E$, it is not clear if the increase of the number of basis functions (increase of the number of reciprocal lattice vectors $G$ used for the basis function $v_G$) lowers the value obtained by the method.

The main ingredient of the proof of the results is a characterization of elements in the orthogonal complement $(H_0^1(\Omega))^{\perp}$ of $H_0^1(\Omega)$ in $H^1(\Omega)$ for a bounded region $\Omega$ as weak solutions to an elliptic partial differential equation. Roughly speaking, using a bound of the $H^2(\Omega)$ norm by a norm of the boundary value of the solution (cf. Lemma \ref{orthogonallem}), we can show that the distance between $u\in \bigoplus_{\alpha=1}^NH^2(\Omega_{\alpha})$ (resp., $u\in \tilde Q_k\cap(\bigoplus_{\alpha=1}^NH^2(\Omega_{\alpha}))$) and the space $H^1(\mathbb R^n)\cap(\bigoplus_{\alpha=1}^NH^2(\Omega_{\alpha}))$ (resp., $Q_k\cap(\bigoplus_{\alpha=1}^NH^2(\Omega_{\alpha}))$) is a semi-norm equivalent to $\sum_{\alpha\neq \beta}\lVert \tilde u_{\alpha}|_{\partial{\Omega_{\alpha}}}-\tilde u_{\beta}|_{\partial{\Omega_{\beta}}}\rVert_{H^{3/2}(\partial\Omega_{\alpha}\cap\partial\Omega_{\beta})}$ (cf. Theorem \ref{distthm}). The estimate for the matrix $H$ follows from this equivalence of the semi-norms. In order to apply the estimate for $(H_0^1(\Omega))^{\perp}$ to the proof of the equivalence of the semi-norms, we need to express an element in $\bigoplus_{\alpha=1}^NH^2(\Omega_{\alpha})$ as a sum of functions on regions without holes (cf. Lemma \ref{decompositionlem}). This is achieved by construction of functions using trace operators and right inverses of the trace operators.


\section{Some preliminaries}\label{secondsec}
The essence of the proof of Theorems \ref{mainthm} and \ref{mainthm2} is the equivalence of two semi-norms concerned with discontinuity of the functions in a direct sum of the Sobolev spaces. For the proof of the equivalence we need to prove several results about the Sobolev space.
Let $\Omega_{\alpha}\subset \mathbb R^{n},\ \alpha=1,\dots,N$ be regions with boundaries of class $C^l$, $l\in \mathbb N,\ l\geq 1$ such that $\Omega_{\alpha}\cap\Omega_{\beta}=\emptyset,\ \alpha\neq \beta$ and $D$ be a region such that $\bigcup_{\alpha=1}^N\overline{\Omega}_{\alpha}=\overline{D}$. We denote by $T^{\alpha}_{l-1}$ the trace operator in $\Omega_{\alpha}$.
First we consider the condition for an element in $\bigoplus_{\alpha=1}^NH^l(\Omega_{\alpha})$ to be a tuple obtained by cutting an element in $H^l(D)$.

\begin{proposition}\label{conectionpro}
Assume that $\tilde u_{\alpha}\in H^l(\Omega_{\alpha}),\ \alpha=1,\dots,N$, and define $u\in L^2(D)$ by $u(x)=\tilde u_{\alpha}(x)$ for $x\in\Omega_{\alpha}$. Then $u\in H^l(D)$ if and only if
$$(T_{l-1}^{\alpha}\tilde u_{\alpha})(x)=(JT_{l-1}^{\beta}\tilde u_{\beta})(x),\ x\in \partial\Omega_{\alpha}\cap\partial\Omega_{\beta},$$
for any $\alpha\neq\beta$ such that $\partial \Omega_{\alpha}\cap\partial\Omega_{\beta}\neq\emptyset$, where $J:\bigoplus_{m=0}^{l-1}H^{l-m-1/2}(\partial\Omega_{\alpha}\cap\partial\Omega_{\beta})\to\bigoplus_{m=0}^{l-1}H^{l-m-1/2}(\partial\Omega_{\alpha}\cap\partial\Omega_{\beta})$ is defined by
$$J\bm g:=(g_0,\dots,(-1)^{m}g_{m},\dots,(-1)^{l-1}g_{l-1}),$$
for $\bm g=(g_0,\dots,g_{l-1})\in \bigoplus_{m=0}^{l-1}H^{l-m-1/2}(\partial\Omega_{\alpha}\cap\partial\Omega_{\beta})$.
\end{proposition}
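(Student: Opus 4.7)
The plan splits along the two implications; necessity is the softer direction, sufficiency is the substantive one.

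\textbf{Necessity.} For $u\in H^l(D)\Rightarrow T^\alpha_{l-1}\tilde u_\alpha=J\,T^\beta_{l-1}\tilde u_\beta$, I would use density. Pick $u_k\in C^\infty(\overline D)$ with $u_k\to u$ in $H^l(D)$. For each smooth $u_k$, at any $x\in\partial\Omega_\alpha\cap\partial\Omega_\beta$ the inward normals satisfy $\nu_\alpha(x)=-\nu_\beta(x)$, so
$$\partial_{\nu_\alpha}^m u_k(x)=(-1)^m\partial_{\nu_\beta}^m u_k(x),\qquad m=0,\dots,l-1,$$
which is precisely $T^\alpha_{l-1}(u_k|_{\Omega_\alpha})=J\,T^\beta_{l-1}(u_k|_{\Omega_\beta})$ on the shared boundary. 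Passing to the limit using continuity of the restriction $H^l(D)\to H^l(\Omega_\alpha)$ and of $T^\alpha_{l-1}$ transfers the identity to $u$.

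\textbf{Sufficiency.} For the reverse direction I would show directly that the piecewise candidates
$$v_\gamma(x):=D^\gamma\tilde u_\alpha(x),\qquad x\in\Omega_\alpha,\ |\gamma|\le l,$$
coincide with the distributional derivatives $D^\gamma u$ on $D$; since each $v_\gamma\in L^2(D)$, this gives $u\in H^l(D)$. For $\phi\in C_c^\infty(D)$, I would write $\int_D u\,D^\gamma\phi\,dx=\sum_\alpha\int_{\Omega_\alpha}\tilde u_\alpha D^\gamma\phi\,dx$ and apply Green's formula $|\gamma|$ times on each $\Omega_\alpha$ (justified by $\tilde u_\alpha\in H^l(\Omega_\alpha)$, $\phi\in C^\infty$):
$$\int_{\Omega_\alpha}\tilde u_\alpha\,D^\gamma\phi\,dx=(-1)^{|\gamma|}\int_{\Omega_\alpha}D^\gamma\tilde u_\alpha\cdot\phi\,dx+B_\alpha(\tilde u_\alpha,\phi),$$
with $B_\alpha$ a finite sum of surface integrals over $\partial\Omega_\alpha$ of products of traces of $D^{\gamma'}\tilde u_\alpha$ ($|\gamma'|\le l-1$) against derivatives of $\phi$ and polynomial expressions in the outward normal $\mu_\alpha=-\nu_\alpha$. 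Contributions from $\partial\Omega_\alpha\cap\partial D$ vanish since $\phi$ has compact support in $D$. On each interface $\partial\Omega_\alpha\cap\partial\Omega_\beta$ the $\alpha$- and $\beta$-contributions pair up: because $\mu_\alpha=-\mu_\beta$, an $m$-th normal derivative of $\tilde u_\alpha$ appears with a relative sign $(-1)^m$ against the corresponding derivative of $\tilde u_\beta$, and the hypothesis $T^\alpha_{l-1}\tilde u_\alpha=J\,T^\beta_{l-1}\tilde u_\beta$ supplies exactly this $(-1)^m$, so the two cancel; tangential derivatives of matched traces match automatically. Summing over $\alpha$ yields $\int_D u\,D^\gamma\phi\,dx=(-1)^{|\gamma|}\int_D v_\gamma\phi\,dx$, hence $D^\gamma u=v_\gamma$.

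\textbf{Main obstacle.} The delicate point is the bookkeeping for $|\gamma|\ge 2$, since iterated Green's identities produce boundary integrands that mix tangential and normal derivatives of $\tilde u_\alpha$ in a form one must reorganize before the cancellation is manifest. The cleanest route is induction on $l$. The base case $l=1$ is a single Green's identity as above. For the inductive step, it suffices to show that if the $T^\alpha_{l-1}$-compatibility holds for $\{\tilde u_\alpha\}$, then the $T^\alpha_{l-2}$-compatibility holds for $\{\partial_j\tilde u_\alpha\}$ on each interface; combining this with $u\in H^{l-1}(D)$ and $\partial_j u\in H^{l-1}(D)$ for all $j$ gives $u\in H^l(D)$. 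The reduction is transparent in local coordinates flattening $\partial\Omega_\alpha\cap\partial\Omega_\beta$ to a hyperplane $\{x_n=0\}$ and aligning $\nu_\alpha$ with $\partial_{x_n}$: the matching condition becomes pure continuity of $\partial_{x_n}^m\tilde u$ across the hyperplane for $m=0,\dots,l-1$, which is preserved under tangential differentiation and loses exactly one order under $\partial_{x_n}$. A partition of unity around each $\partial\Omega_\alpha\cap\partial\Omega_\beta$ patches the local statements into the global one.
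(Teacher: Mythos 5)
Your proposal is correct and follows essentially the same route as the paper: integrate by parts on each $\Omega_{\alpha}$ so that only interface terms survive, and reduce the case $l\geq 2$ to the first-order case applied to the derivatives $D^{\gamma}\tilde u_{\alpha}$ after flattening each interface and using that tangential differentiation commutes with the trace, with the sign $(-1)^m$ coming from $\nu^{\alpha}=-\nu^{\beta}$. The only divergence is the necessity direction, which you obtain by density of $C^{l}(\overline D)$ (this tacitly needs mild regularity of $\partial D$, harmless in the paper's settings), whereas the paper gets both implications at once by observing that the summed boundary integrals vanish for all test functions if and only if the traces match.
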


\begin{remark}
In \cite[Lemma 1.23]{PE} the result corresponding to the case $l=1$ has been given.
\end{remark}

\begin{proof}
Let $\varphi(x)\in C_0^{\infty}(D)$. Using the divergence theorem in $\Omega_{\alpha}$ for the vector valued function $(0,\dots,0,\tilde u_{\alpha}(x)\varphi(x),0,\dots,0)$ whose $i$th component is $\tilde u_{\alpha}(x)\varphi(x)$ we obtain
$$\int_{\Omega_{\alpha}}\tilde u_{\alpha}(x)\frac{\partial\varphi}{\partial x_i}(x)dx=-\int_{\partial\Omega_{\alpha}}\tilde u_{\alpha}|_{\partial\Omega_{\alpha}}(x)\varphi(x)\nu^{\alpha}_i(x)ds-\int_{\Omega_{\alpha}}\frac{\partial\tilde u_{\alpha}}{\partial x_i}(x)\varphi(x)dx,$$
where $\nu_i^{\alpha}(x)$ is the $i$th component of the inward pointing normal vector $\nu^{\alpha}(x)$ of $\partial \Omega_{\alpha}$ at $x\in\partial\Omega_{\alpha}$. Taking sums of both sides with respect to ${\alpha}$ we have
$$\int_{D}u(x)\frac{\partial\varphi}{\partial x_i}(x)dx=-\sum_{{\alpha}=1}^N\int_{\partial\Omega_{\alpha}}\tilde u_{\alpha}|_{\partial\Omega_{\alpha}}(x)\varphi(x)\nu^{\alpha}_i(x)ds-\sum_{{\alpha}=1}^N\int_{\Omega_{\alpha}}\frac{\partial \tilde u_{\alpha}}{\partial x_i}(x)\varphi(x)dx.$$
Thus $u(x)$ is weakly differentiable if and only if $\sum_{{\alpha}=1}^N\int_{\partial\Omega_{\alpha}}\tilde u_{\alpha}|_{\partial\Omega_{\alpha}}(x)\varphi(x)\nu^{\alpha}_i(x)ds=0,\ i=1,\dots,n$ for any $\varphi(x)\in C_0^{\infty}(D)$. For otherwise by considering a sequence $\{\varphi_m\}\subset C_0^{\infty}(D)$ whose support converges to a boundary $\partial\Omega_{\alpha}$, we can see that the right-hand side can not be written as $\int_Dw \varphi dx$ with some $w\in L^1_{\mathrm{loc}}(D)$. Since $\nu_i^{\beta}(x)=-\nu_i^{\alpha}(x)$ at $x\in \partial\Omega_{\alpha}\cap\partial\Omega_{\beta}$, this condition holds if and only if $\tilde u_{\alpha}|_{\partial\Omega_{\alpha}}(x)=\tilde u_{\beta}|_{\partial\Omega_{\beta}}(x),\ x\in\partial\Omega_{\alpha}\cap\partial\Omega_{\beta}$ for any ${\alpha}\neq \beta$ such that $\partial \Omega_{\alpha}\cap\partial\Omega_{\beta}\neq\emptyset$.

As for the higher order derivatives, for any $x\in \partial\Omega_{\alpha}\cap\partial\Omega_{\beta}$ we have a $C^l$-diffeomorphism $\psi$ from a neighborhood $\mathcal N\subset \mathbb R^{n}$ of $x$ to a neighborhood in $\mathbb R^n$ of a point $\psi(x)\in \mathbb R^{n-1}$ such that $\psi(\mathcal N\cap\partial \Omega_{\alpha}\cap\partial\Omega_{\beta})\subset\mathbb R^{n-1}$, where $\mathbb R^{n-1}$ is given by $\mathbb R^{n-1}=\{(x_1,\dots,x_{n-1},0):x_1,\dots,x_{n-1}\in\mathbb R\}$. By an argument using a partition of unity, we have only to consider $\eta\tilde u_{\alpha}$ and $\eta\tilde u_{\beta}$ for some $\eta\in C_0^{\infty}(\mathcal N)$, which we still denote by $\tilde u_{\alpha}$ and $\tilde u_{\beta}$. Moreover, denoting $\tilde u_{\alpha}\circ\psi^{-1}$ and $\tilde u_{\beta}\circ\psi^{-1}$ again by $\tilde u_{\alpha}$ and $\tilde u_{\beta}$, we have only to consider the functions $\tilde u_{\alpha}$ and $\tilde u_{\beta}$ defined on $\Omega_{\alpha}$ and $\Omega_{\beta}$ such that $\partial\Omega_{\alpha}\cap\partial\Omega_{\beta}\subset\mathbb R^{n-1}$. 

Approximating elements in $H^l(\Omega_{\alpha})$ by those in $C^{\infty}(\overline{\Omega}_{\alpha})$ we can see that $\frac{\partial^2\tilde u_{\alpha}}{\partial x_{i_1}\partial x_{i_2}}\newline=\frac{\partial^2\tilde u_{\alpha}}{\partial x_{i_2}\partial x_{i_1}},\ 1\leq i_1,i_2\leq n$ and $\frac{\partial}{\partial x_{i}}(\tilde u_{\alpha}|_{\partial\Omega_{\alpha}\cap\partial\Omega_{\beta}})=\frac{\partial \tilde u_{\alpha}}{\partial x_{i}}\Big|_{\partial\Omega_{\alpha}\cap\partial\Omega_{\beta}},\ 1\leq i\leq n-1$ for $\tilde u_{\alpha}\in H^l(\Omega_{\alpha}),\ l\geq 2$ in the sense of weak derivatives. Hence we can rewrite boundary value of any derivative in the form $\partial_{x_1}^{\gamma_1}\dotsm\partial_{x_{n-1}}^{\gamma_{n-1}}(\partial_{x_{n}}^{\gamma_{n}}\tilde u_{\alpha}|_{\partial\Omega_{\alpha}\cap\partial\Omega_{\beta}})$. By application to $(D^{\gamma}\tilde u_{\alpha})|_{\partial\Omega_{\alpha}\cap\partial\Omega_{\beta}}$ and $(D^{\gamma}\tilde u_{\beta})|_{\partial\Omega_{\alpha}\cap\partial\Omega_{\beta}}$ of the same argument as that for the boundary value $\tilde u_{\alpha}|_{\partial\Omega_{\alpha}}$ above we can see that $u$ has higher order weak derivatives if and only if $\partial_{x_{n}}^{\gamma_{n}}\tilde u_{\alpha}|_{\partial\Omega_{\alpha}\cap\partial\Omega_{\beta}}=\partial_{x_{n}}^{\gamma_{n}}\tilde u_{\beta}|_{\partial\Omega_{\alpha}\cap\partial\Omega_{\beta}}, x\in\partial\Omega_{\alpha}\cap\partial\Omega_{\beta}$. Finally noting that one of the derivations $\frac{\partial}{\partial \nu^{\alpha}}$ and $\frac{\partial}{\partial \nu^{\beta}}$ in the directions of the inward pointing normals is $\partial_{x_{n}}$ and the other is $-\partial_{x_{n}}$, we obtain the result.
\end{proof}

For the proof of Theorems \ref{mainthm} and \ref{mainthm2} the following lemma about a characterization of the intersection of $H^2(\Omega)$ and the orthogonal complement $(H_0^1(\Omega))^{\perp}$ of $H_0^1(\Omega)$ in $H^1(\Omega)$ for a region $\Omega$ with a bounded boundary is crucial.

\begin{lemma}\label{orthogonallem}
Let $\Omega$ be a region with a bounded boundary of class $C^3$. Then there exists a constant $C>0$ depending only on $\Omega$ such that
\begin{equation}\label{myeq2.0}
\lVert u\rVert_{H^2(\Omega)}\leq C\lVert u|_{\partial \Omega}\rVert_{H^{3/2}(\partial \Omega)},
\end{equation}
for any $u\in (H_0^1(\Omega))^{\perp}\cap H^2(\Omega)$.
\end{lemma}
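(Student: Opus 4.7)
The plan is to recognize the orthogonality condition as a weak elliptic equation and then invoke standard Dirichlet-problem estimates. Unpacking $u\in (H_0^1(\Omega))^{\perp}\cap H^2(\Omega)$ with respect to the usual $H^1$-inner product yields
\[
\int_{\Omega}\nabla u\cdot\nabla v\,dx+\int_{\Omega}uv\,dx=0\qquad\text{for every } v\in H_0^1(\Omega),
\]
which is precisely the weak form of $-\Delta u+u=0$ in $\Omega$. Since $u\in H^2(\Omega)$, the identity is in fact satisfied a.e. Consequently $u$ is an $H^2$-solution of the Dirichlet problem
\[
-\Delta u+u=0\text{ in }\Omega,\qquad u|_{\partial\Omega}=g:=T_0u\in H^{3/2}(\partial\Omega),
\]
and the task reduces to estimating $\|u\|_{H^2(\Omega)}$ by $\|g\|_{H^{3/2}(\partial\Omega)}$.

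First I would fix a continuous right inverse $R\colon H^{3/2}(\partial\Omega)\to H^2(\Omega)$ of the trace $T_0$, which exists because $\partial\Omega$ is $C^3$; since $\partial\Omega$ is compact one can moreover arrange that $Rg$ be supported in a bounded tubular neighborhood of $\partial\Omega$. Setting $w:=u-Rg\in H_0^1(\Omega)\cap H^2(\Omega)$, the function $w$ satisfies $-\Delta w+w=f$ with $f:=\Delta(Rg)-Rg\in L^2(\Omega)$ and $\|f\|_{L^2}\leq C\|g\|_{H^{3/2}}$. Testing this equation against $w$ and using coercivity of the $H^1$-form on $H_0^1(\Omega)$ gives $\|w\|_{H^1(\Omega)}\leq \|f\|_{L^2(\Omega)}$, hence the $H^1$-bound $\|u\|_{H^1(\Omega)}\leq C\|g\|_{H^{3/2}(\partial\Omega)}$. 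To upgrade to $H^2$, I would combine two local estimates. Near $\partial\Omega$, classical boundary $H^2$-regularity for $-\Delta+1$ with Dirichlet data in $H^{3/2}$ (boundary flattening plus a finite partition of unity, available because $\partial\Omega$ is compact $C^2$) yields $\|u\|_{H^2(\Omega\cap U)}\leq C(\|u\|_{H^1(\Omega)}+\|g\|_{H^{3/2}})$ on a bounded tubular neighborhood $U$ of $\partial\Omega$. Away from $\partial\Omega$, pick $\chi\in C^{\infty}(\mathbb R^n)$ supported in $\Omega$ with $\chi\equiv 1$ outside $U$; extending $\chi u$ by zero to $\mathbb R^n$ one computes
\[
(-\Delta+1)(\chi u)=-2\nabla\chi\cdot\nabla u-(\Delta\chi)u,
\]
whose $L^2(\mathbb R^n)$-norm is bounded by $C\|u\|_{H^1(\Omega)}$. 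The isomorphism $-\Delta+1\colon H^2(\mathbb R^n)\to L^2(\mathbb R^n)$ (Fourier transform) then gives $\|\chi u\|_{H^2(\mathbb R^n)}\leq C\|u\|_{H^1(\Omega)}$, and patching produces the claimed bound $\|u\|_{H^2(\Omega)}\leq C\|g\|_{H^{3/2}(\partial\Omega)}$.

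The main delicacy is that $\Omega$ need not be bounded (only $\partial\Omega$ is assumed so), so the standard global $H^2$-regularity theorem for the Dirichlet problem on a bounded domain cannot be quoted directly, and the far-field region must be handled separately by the cutoff argument above. It is crucial that the orthogonal complement be taken with respect to the $H^1$-inner product: this is what makes the resulting elliptic operator $-\Delta+1$ rather than $-\Delta$, and both the coercivity on $H_0^1(\Omega)$ used for the $H^1$-estimate and the isomorphism property on $H^2(\mathbb R^n)$ used for the far-field estimate rely on the added lower-order term. With $-\Delta$ alone these properties would fail on an unbounded $\Omega$, and the argument could not be closed.
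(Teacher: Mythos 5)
Your proof is correct, and its core reduction is the same as the paper's: identify $(H_0^1(\Omega))^{\perp}$ with weak solutions of $-\Delta u+u=0$, subtract a continuous lift of the trace so that the difference lies in $H^2(\Omega)\cap H_0^1(\Omega)$, and then bound that difference by elliptic regularity for the Dirichlet problem. The difference lies in how the regularity estimate is executed. The paper works abstractly: it identifies the domain of the Dirichlet Laplacian $-\Delta^{D}$ as $H^2(\Omega)\cap H_0^1(\Omega)$ (citing the global regularity theorem), writes $u=Z_1\bm g-(-\Delta^{D}+1)^{-1}(-\Delta+1)Z_1\bm g$, and concludes from boundedness of the resolvent $(-\Delta^{D}+1)^{-1}:L^2(\Omega)\to H^2(\Omega)\cap H_0^1(\Omega)$ via the open mapping theorem. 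You instead prove the estimate by hand: an energy (coercivity) bound for the $H^1$ norm, boundary $H^2$ estimates on a tubular neighborhood of the compact boundary, and a cutoff plus the Fourier-transform isomorphism $-\Delta+1:H^2(\mathbb R^n)\to L^2(\mathbb R^n)$ in the far field. Your route is longer but buys an explicit treatment of the case where $\Omega$ is unbounded (only $\partial\Omega$ is assumed bounded, and e.g.\ $\Omega_1$ in assumption (A) is unbounded), a point you rightly flag: the standard global $H^2$-regularity theorem the paper cites is stated for bounded domains, so your localization argument is exactly the supplement needed to make that citation airtight in the unbounded setting. The paper's route is shorter and packages the same content into the statement that $\mathcal D(-\Delta^{D})=H^2(\Omega)\cap H_0^1(\Omega)$ with the graph-norm equivalence supplied by the closed graph/open mapping theorem.
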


\begin{proof}
By definition, $u\in H^1(\Omega)$ belongs to $(H_0^1(\Omega))^{\perp}$ if and only if
\begin{equation*}\label{myeq2.1}
\langle \nabla v,\nabla u\rangle_{\Omega}+\langle v,u\rangle_{\Omega}=0,
\end{equation*}
for any $v\in H_0^1(\Omega)$. 
This means that $u\in (H_0^1(\Omega))^{\perp}$ if and only if $u$ is a weak solution of $-\Delta u+u=0$. Since $\Omega$ has a boundary of class $C^3$, the continuous linear right inverse $Z_1$ of the trace operator $T_1$ exists, i.e.  $T_1Z_1$ is the identity operator (cf. \cite[Theorem 8.8]{Wl}). Let us define $\bm g:=(T_0u,0)\in H^{3/2}(\partial\Omega)\bigoplus H^{1/2}(\partial\Omega)$. Then we have $Z_1 \bm g\in H^2(\Omega)$ and $T_0(u-Z_1 \bm g)=0$.  Here it is known that $w\in H^1(\Omega)$ satisfies $w\in H^1_0(\Omega)$, if and only if $T_0w=0$ (cf. \cite[Theorem 8.9]{Wl}). Therefore, we have $u-Z_1 \bm g\in H^2(\Omega)\cap H^1_0(\Omega)$. We denote by $-\Delta^D$ the selfadjoint operator in $L^2(\Omega)$ associated with the quadratic form $\langle \nabla u,\nabla u\rangle_{\Omega}$ with the form domain $H^1_0(\Omega)$ (cf. \cite[Definition in p.263]{RS4} and \cite[Theorem VIII.15]{RS}), where the superscript $D$ indicates the Dirichlet boundary condition. When we determine the domain of $-\Delta^D$, using the regularity theorem for elliptic equations (see e.g. \cite[Theorem 8.12]{GT}) we can see that the domain of $-\Delta^D$ is $\mathcal D(-\Delta^D)=H^2(\Omega)\cap H^1_0(\Omega)$, and thus $u-Z_1\bm g\in \mathcal D(-\Delta^D)$. Moreover, since $u, Z_1\bm g\in H^2(\Omega)$ and $u$ is a weak solution of $-\Delta u+u=0$, we have 
\begin{equation}\label{myeq2.1.0.0.1}
(-\Delta^D+1)(u-Z_1\bm g)=(-\Delta+1)(u-Z_1\bm g)=-(-\Delta+1)Z_1\bm g,
\end{equation}
where $-\Delta$ means the usual weak derivative.
Since $-\Delta^D\geq 0$, the real number $-1$ belongs to the resolvent set of $-\Delta^D$. Applying the resolvent $R(-1):=(-\Delta^D+1)^{-1}$ to the both sides of \eqref{myeq2.1.0.0.1}, we obtain $u=Z_1\bm g-R(-1)(-\Delta+1)Z_1\bm g$. Therefore, the estimate \eqref{myeq2.0} follows from the continuity of $Z_1:H^{3/2}(\partial \Omega)\bigoplus H^{1/2}(\partial\Omega)\to H^2(\Omega)$ and $R(-1):L^2(\Omega)\to H^2(\Omega)\cap H^1_0(\Omega)$ (which follows from the open mapping theorem). This completes the proof.
\end{proof}

We have a decomposition $H^2(\Omega_{\alpha})=(H^1_0(\Omega_{\alpha})\cap H^2(\Omega_{\alpha}))\bigoplus ((H^1_0(\Omega_{\alpha}))^{\perp}\cap H^2(\Omega_{\alpha}))$ (cf. proof of Theorem \ref{distthm}). Moreover, we obviously have
$$\sum_{\alpha\neq \beta}\lVert \tilde u_{\alpha}|_{\partial \Omega_{\alpha}}-\tilde u_{\beta}|_{\partial \Omega_{\beta}}\rVert_{H^{3/2}(\partial\Omega_{\alpha}\cap\partial\Omega_{\beta})}\newline=0,$$
for $(\tilde u_1,\dots,\tilde u_N)\in \bigoplus_{\alpha=1}^N(H^1_0(\Omega_{\alpha})\cap H^2(\Omega_{\alpha}))$. Therefore, one would expect that the norm of $(\tilde u_1,\dots,\tilde u_N)\in \bigoplus_{\alpha=1}^N((H^1_0(\Omega_{\alpha}))^{\perp}\cap H^2(\Omega_{\alpha}))$ could be estimated using $\sum_{\alpha\neq \beta}\lVert \tilde u_{\alpha}|_{\partial \Omega_{\alpha}}-\tilde u_{\beta}|_{\partial \Omega_{\beta}}\rVert_{H^{3/2}(\partial\Omega_{\alpha}\cap\partial\Omega_{\beta})}$, which is the point of departure of the idea for Theorem \ref{distthm}. However, this is not the case, because on $\partial\Omega_{\alpha}\cap\partial\Omega_{\beta}$ the values of $\tilde u_{\alpha}|_{\partial \Omega_{\alpha}}$ and $\tilde u_{\beta}|_{\partial \Omega_{\beta}}$ can cancel out. In order to avoid this, we decompose an element in $\bigoplus_{\alpha=1}^NH^2(\Omega_{\alpha})$ into a sum of tuples that have nonzero difference $\tilde u_{\alpha}|_{\partial \Omega_{\alpha}}-\tilde u_{\beta}|_{\partial \Omega_{\beta}}$ only on different  $\partial\Omega_{\alpha}\cap\partial\Omega_{\beta}$ on which the difference is given by $\tilde u_{\alpha}|_{\partial \Omega_{\alpha}}$ because $\tilde u_{\beta}|_{\partial \Omega_{\beta}}=0$, which is the goal of Lemma \ref{decompositionlem} below.

Suppose that $\Omega_{\alpha},\ \alpha=1,\dots,N$ are regions satisfying (A) or (A'). Then for any $1\leq \alpha\leq N$ there exists a region $\hat \Omega^{\alpha}\subset \mathbb R^n$ satisfying the following condition. In the case of (A) (resp., (A')) if $\Omega_{\alpha}$ is bounded (resp., $\partial\Omega_{\alpha}\cap\partial D=\emptyset$), then $\Omega_{\alpha}\subset\hat\Omega^{\alpha}$, $\partial\hat\Omega^{\alpha}\subset\partial\Omega_{\alpha}$ and $\partial\hat\Omega_{\alpha}$ has only one connected component. If $\Omega_{\alpha}$ is unbounded (resp., $\partial\Omega_{\alpha}\cap\partial D\neq\emptyset$), then $\hat\Omega^{\alpha}=\mathbb R^n$ (resp., $\hat\Omega^{\alpha}=D$) in the case of (A) (resp., (A')). Intuitively, $\hat\Omega^{\alpha}$ is the region obtained by filling up the holes in $\Omega_{\alpha}$ (see Figure \ref{fig2}).
\begin{figure}[h]
\centering
\includegraphics[width=0.9\textwidth, trim={0 1cm 2cm 0}]{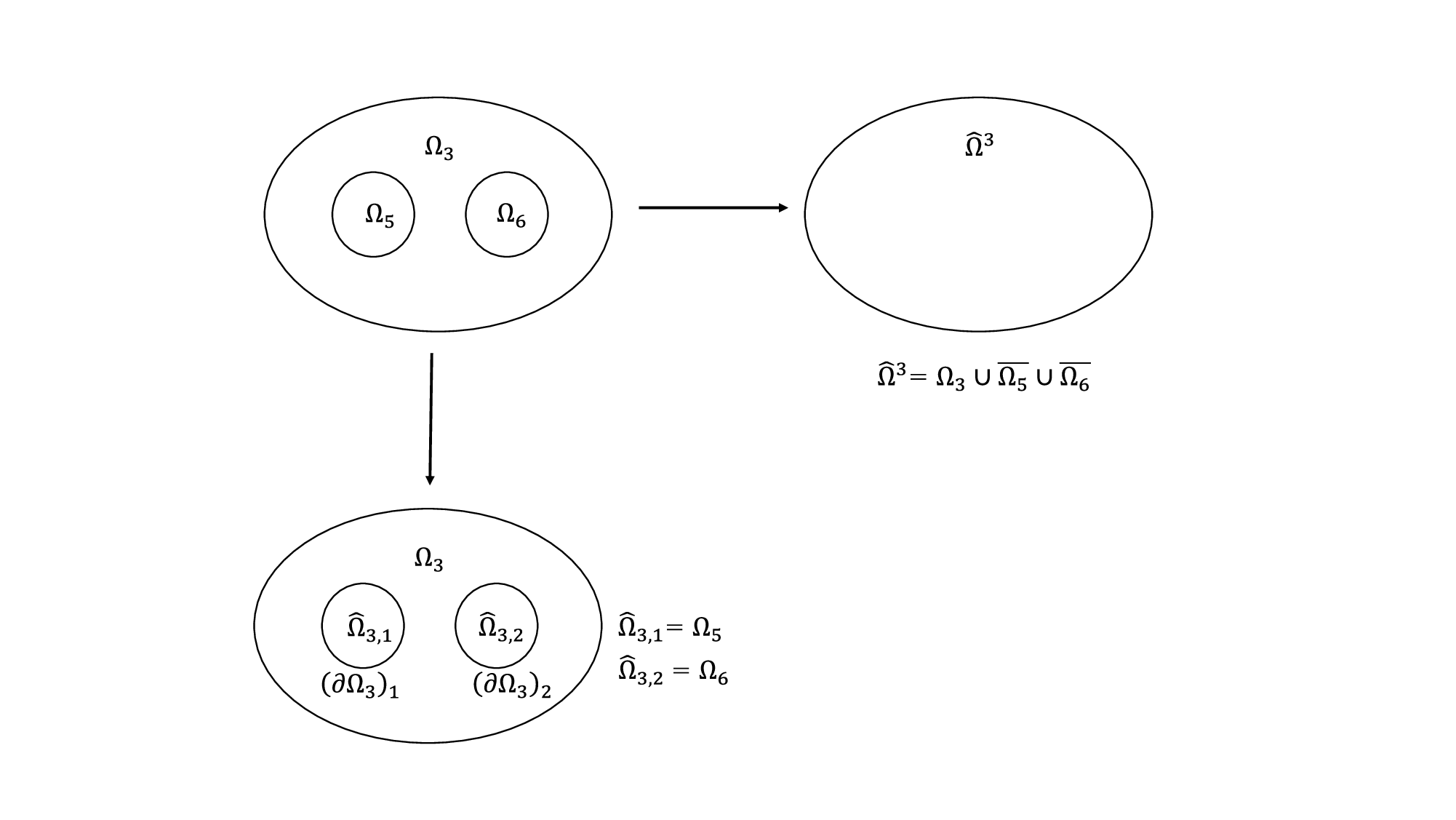}
\caption{Definitions of the regions}\label{fig2}
\end{figure}
\begin{figure}[h]
\centering
\includegraphics[width=0.8\textwidth, trim={0 1cm 2cm 0}]{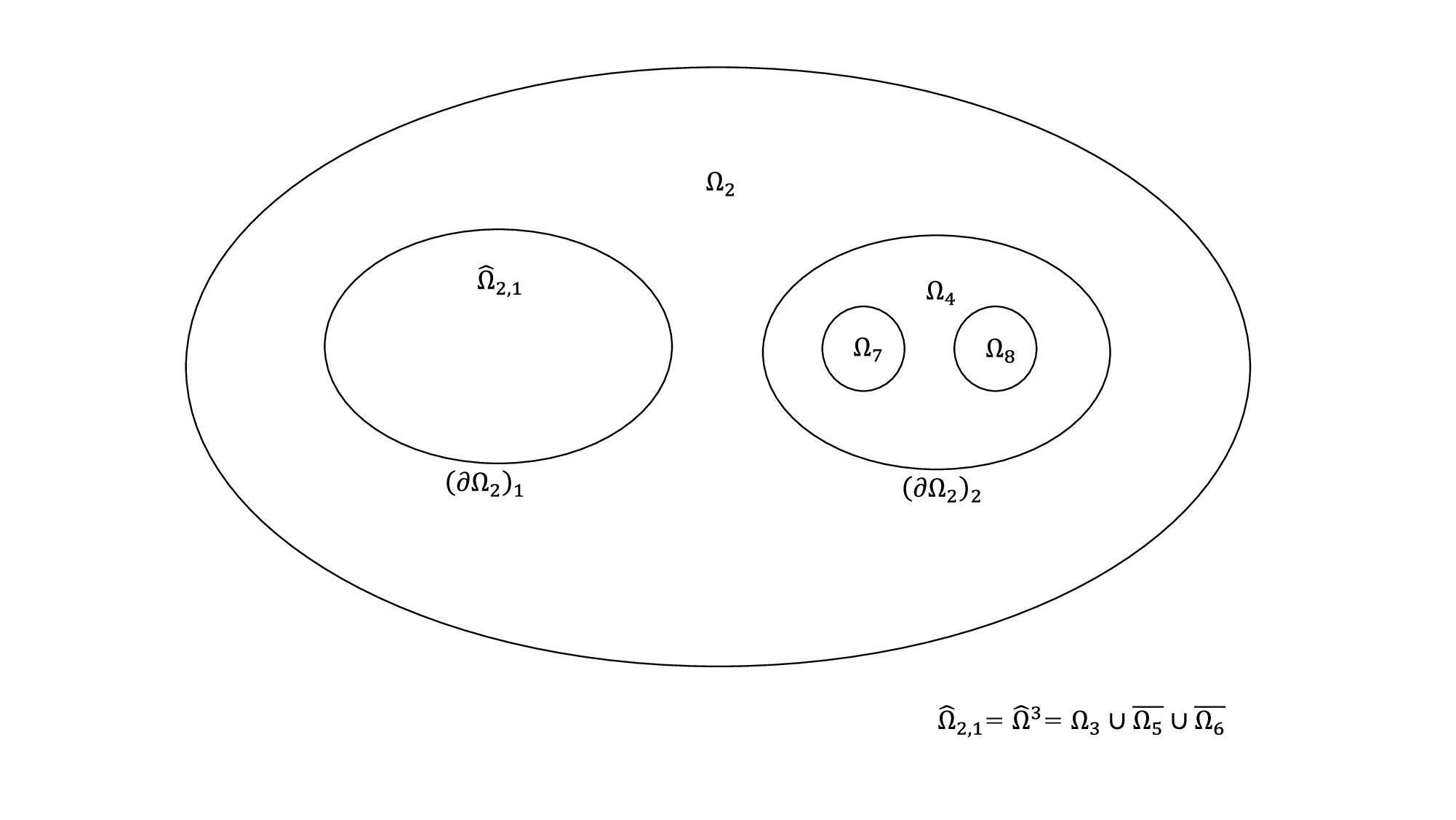}
\caption{Definitions of the regions corresponding to Figure \ref{omegaarr}}\label{fig3}
\end{figure}
For example, $\hat\Omega^3=\Omega_3\cup\overline{\Omega_5}\cup\overline{\Omega_6}$ in the case of Figure \ref{fig2}, and $\hat\Omega^2=\Omega_2\cup\overline{\Omega_3}\cup\overline{\Omega_5}\cup\overline{\Omega_6}\cup\overline{\Omega_4}\cup\overline{\Omega_7}\cup\overline{\Omega_8}$ in the case of Figure \ref{omegaarr}. We denote connected components of $\partial\Omega_{\alpha}$ that enclose bounded regions not including points in $\Omega_{\alpha}$ by $(\partial\Omega_{\alpha})_1,\dots,(\partial\Omega_{\alpha})_{P_{\alpha}}$ (if such connected components exist), and denote the bounded region in $\mathbb R^n$ whose boundary is $(\partial\Omega_{\alpha})_p$ by $\hat\Omega_{\alpha,p}$. Intuitively, $\hat\Omega_{\alpha,p},\ p=1,\dots,P_{\alpha}$ are holes in $\Omega_{\alpha}$ (see Figures \ref{fig2} and \ref{fig3}). Then there exists some $1\leq \beta\leq N$ such that $\hat\Omega^{\beta}=\hat\Omega_{\alpha,p}$. For example, $\hat\Omega_{2,1}=\hat\Omega^3=\Omega_3\cup\overline{\Omega_5}\cup\overline{\Omega_6}$ in the case of Figure \ref{omegaarr} (see Figure \ref{fig3}). We shall consider the case of (A).
Let $u_{\alpha}\in H^2(\hat\Omega^{\alpha})$. If we set $\tilde u_{\beta}\in H^2(\Omega_{\beta})$ by $\tilde u_{\beta}(x)=u_{\alpha}(x),\ x\in \Omega_{\beta}$ for $\Omega_{\beta}\subset\hat \Omega^{\alpha}$ and $\tilde u_{\beta}=0$ for $\Omega_{\beta}\not\subset \hat\Omega^{\alpha}$, we can identify $u_{\alpha}\in H^2(\hat\Omega^{\alpha})$ with $(\tilde u_1,\dots,\tilde u_N)\in \bigoplus_{\beta=1}^NH^2(\Omega_{\beta})$. In this sense we have
$$H^2(\hat\Omega^{\alpha})\subset\bigoplus_{\beta=1}^NH^2(\Omega_{\beta}),$$
which yields
$$\sum_{\alpha=1}^NH^2(\hat\Omega^{\alpha})\subset\bigoplus_{\beta=1}^NH^2(\Omega_{\beta}).$$
In fact, the opposite inclusion holds.

\begin{lemma}\label{decompositionlem}
Under the assumption (A) we have
$$\sum_{\alpha=1}^NH^2(\hat\Omega^{\alpha})=\bigoplus_{\beta=1}^NH^2(\Omega_{\beta}).$$
\end{lemma}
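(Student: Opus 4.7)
The inclusion $\sum_{\alpha=1}^N H^2(\hat\Omega^\alpha) \subset \bigoplus_{\beta=1}^N H^2(\Omega_\beta)$ having been established in the paragraph preceding the lemma, my task is to prove the reverse inclusion, which I will do by an explicit top-down construction. First I organize the regions into a rooted tree by declaring $\beta \prec \alpha$ whenever $\hat\Omega^\beta \subsetneq \hat\Omega^\alpha$. Under (A) the unique root $\alpha_0$ is the index with $\Omega_{\alpha_0}$ unbounded (so $\hat\Omega^{\alpha_0} = \mathbb R^n$), and the children of a node $\alpha$ are precisely the indices $\beta$ for which $\hat\Omega^\beta$ coincides with one of the filled holes $\hat\Omega_{\alpha,p}$ of $\Omega_\alpha$. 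The key observation is that for $x \in \Omega_\gamma$, the set of indices $\alpha$ with $x \in \hat\Omega^\alpha$ is exactly the ancestor chain $\{\alpha : \alpha \succeq \gamma\}$.

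Fix $(\tilde u_1, \dots, \tilde u_N) \in \bigoplus_\beta H^2(\Omega_\beta)$ and process the tree from the root downward. At the step for $\alpha$ all $u_\beta$ with $\beta \succ \alpha$ have already been defined, and I set
\[
w_\alpha := \tilde u_\alpha - \sum_{\beta \succ \alpha} u_\beta\big|_{\Omega_\alpha} \in H^2(\Omega_\alpha),
\]
which lies in $H^2(\Omega_\alpha)$ since each summand does. Because $\partial\Omega_\alpha$ is bounded and of class $C^3$, a standard Sobolev extension theorem supplies a continuous linear operator $E_\alpha : H^2(\Omega_\alpha) \to H^2(\mathbb R^n)$; for bounded $\Omega_\alpha$ this is classical, and for the single unbounded $\Omega_{\alpha_0}$ the same finite patching of local $C^3$ half-space extensions with the identity far from the (bounded) boundary, glued by a partition of unity, delivers it. I then set $u_\alpha := (E_\alpha w_\alpha)|_{\hat\Omega^\alpha} \in H^2(\hat\Omega^\alpha)$, so that $u_\alpha|_{\Omega_\alpha} = w_\alpha$.

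Verification is then immediate. Identifying each $u_\alpha \in H^2(\hat\Omega^\alpha)$ with the tuple whose $\beta$-th component is $u_\alpha|_{\Omega_\beta}$ if $\Omega_\beta \subset \hat\Omega^\alpha$ and $0$ otherwise, for any $\gamma$ and $x \in \Omega_\gamma$ one has
\[
\Big(\sum_\alpha u_\alpha\Big)(x) = \sum_{\alpha \succeq \gamma} u_\alpha(x) = \sum_{\beta \succ \gamma} u_\beta(x) + w_\gamma(x) = \tilde u_\gamma(x)
\]
by the very definition of $w_\gamma$, so the tuple of restrictions of $\sum_\alpha u_\alpha$ is $(\tilde u_1,\dots,\tilde u_N)$. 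The only nontrivial ingredient is the existence of the extension operator $E_\alpha$ at each step, which is precisely the point at which the $C^3$ regularity of $\partial\Omega_\alpha$ enters; everything else is a direct calculation propagating through the finite tree.
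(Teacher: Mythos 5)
Your proof is correct, and while the overall bookkeeping (ordering the regions by nesting and telescoping the contributions) is the same as the paper's induction, the key construction step is genuinely different. The paper builds each $v_{\alpha}\in H^2(\hat\Omega^{\alpha})$ by keeping $\tilde u^{\alpha-1}_{\alpha}$ on $\Omega_{\alpha}$ and filling each hole $\hat\Omega_{\alpha,p}$ with $Z_1^{\alpha,p}JT_1^{\alpha,p}\tilde u^{\alpha-1}_{\alpha}$, i.e.\ by prescribing matching Cauchy data on $(\partial\Omega_{\alpha})_p$ via a right inverse of the trace operator and then invoking Proposition \ref{conectionpro} (with the sign-flip $J$ accounting for the opposite inward normals) to conclude membership in $H^2(\hat\Omega^{\alpha})$. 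You instead apply a whole-space Sobolev extension operator $E_{\alpha}:H^2(\Omega_{\alpha})\to H^2(\mathbb R^n)$ to $w_{\alpha}$ and restrict to $\hat\Omega^{\alpha}$, which lands in $H^2(\hat\Omega^{\alpha})$ with no gluing argument at all. For the bare set equality asserted by the lemma your route is shorter and self-contained: it bypasses Proposition \ref{conectionpro} and the operator $J$ entirely, at the cost of invoking the (standard, and available under (A) since $\partial\Omega_{\alpha}$ is bounded and $C^3$) extension theorem, including for the one unbounded region, which you correctly address. What the paper's construction buys in exchange is that the filled-in part of $v_{\alpha}$ is determined purely by the boundary trace of $\tilde u^{\alpha-1}_{\alpha}$ through the continuous operators $T_1^{\alpha,p}$ and $Z_1^{\alpha,p}$, which keeps the construction in the same trace-operator framework used in Lemma \ref{orthogonallem} and Theorem \ref{distthm}; since Theorem \ref{distthm} only uses that $v_{\alpha}\in H^2(\hat\Omega^{\alpha})$ and $\sum_{\alpha}v_{\alpha}=u$, your decomposition would serve there equally well. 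Two small points worth making explicit if you write this up: that a point of $\Omega_{\gamma}$ lies in $\hat\Omega^{\alpha}$ exactly for the ancestors $\alpha\succeq\gamma$ (which you state and which underlies the telescoping), and that the nested-or-disjoint structure of the $\hat\Omega^{\alpha}$ really does make $\prec$ a tree with the unbounded region as root; both follow from (A) but deserve a sentence.
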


\begin{proof}
We may assume that the regions are labeled so that if $\Omega_{\alpha}$ is included in some region $\hat\Omega_{\beta,p}$, we have $\alpha>\beta$  (see Figure \ref{omegaarr}). Let $u=(\tilde u_1,\dots,\tilde u_N)\in \bigoplus_{\beta=1}^NH^2(\Omega_{\beta})$. We shall construct $v_{\alpha}\in H^2(\hat \Omega^{\alpha}),\ 1\leq \alpha\leq N$ so that $u=\sum_{\alpha=1}^Nv_{\alpha}$. We denote the restriction of the trace operator in $\Omega_{\alpha}$ to a connected component $(\partial\Omega_{\alpha})_p$ by $T^{\alpha,p}_1:H^2(\Omega_{\alpha})\to H^{3/2}((\partial\Omega_{\alpha})_p)\bigoplus H^{1/2}((\partial\Omega_{\alpha})_p)$. We also denote the right inverse of the trace operator in $\hat\Omega_{\alpha,p}$ by $Z_1^{\alpha,p}:H^{3/2}((\partial\Omega_{\alpha})_p)\bigoplus H^{1/2}((\partial\Omega_{\alpha})_p)\to H^2(\hat\Omega_{\alpha,p})$.

We construct auxiliary functions $\tilde u^{\alpha}_{\beta}\in H^2(\Omega_{\beta}),\ 1\leq \beta\leq N$ inductively with respect to $\alpha=0,\dots,N$. We define $\tilde u_{\beta}^0\in H^2(\Omega_{\beta})$ by $\tilde u_{\beta}^0:=u_{\beta}$. Assume that $\tilde u^{\alpha}_{\beta},\ 1\leq \beta\leq N$ have been constructed. We construct $v_{\alpha+1}\in H^2(\hat \Omega^{\alpha+1})$ by
$$v_{\alpha+1}(x):=\begin{cases}
\tilde u^{\alpha}_{\alpha+1}(x),& x\in\Omega_{\alpha+1}\\
(Z^{\alpha+1,p}_1JT^{\alpha+1,p}_1\tilde u_{\alpha+1}^{\alpha})(x),& x\in\hat\Omega_{\alpha+1,p},\ p=1,\dots,P_{\alpha+1}
\end{cases}.$$
where $J$ is the operator defined in Proposition \ref{conectionpro}.
Since the images of $\tilde u^{\alpha}_{\alpha+1}$ and $Z^{\alpha+1,p}_1JT^{\alpha+1,p}_1\tilde u^{\alpha}_{\alpha+1}$ on $(\partial\Omega_{\alpha+1})_p$ by the trace operators are $T^{\alpha+1,p}_1\tilde u_{\alpha+1}^{\alpha}$ and $JT^{\alpha+1,p}_1\newline\tilde u^{\alpha}_{\alpha+1}$, by Proposition \ref{conectionpro} we can see that $v_{\alpha+1}\in H^2(\hat\Omega^{\alpha+1})$. We also set $\tilde u_{\beta}^{\alpha+1}(x)=\tilde u_{\beta}^{\alpha}(x)-v_{\alpha+1}(x),\ x\in\Omega_{\beta}$ (recall that we assume $v_{\alpha+1}(x)=0$ for $x\notin \hat\Omega^{\alpha+1}$).

By this procedure we construct $v_1,\dots,v_N$.
Regarding $v_{\alpha}\in H^2(\hat\Omega^{\alpha})$ as elements in $\bigoplus_{\beta=1}^NH^2(\Omega_{\beta})$ as stated above the lemma we have
$$\sum_{\alpha'=1}^{\alpha+1}v_{\alpha'}+(\tilde u^{\alpha+1}_1,\dots,\tilde u^{\alpha+1}_N)=\sum_{\alpha'=1}^{\alpha}v_{\alpha'}+(\tilde u^{\alpha}_1,\dots,\tilde u^{\alpha}_N).$$
Hence it follows that $\sum_{\alpha'=1}^Nv_{\alpha'}=(\tilde u_1,\dots,\tilde u_N)$. This completes the proof.
\end{proof}

The following lemma follows in exactly the same way noting that $v_1(x)=\tilde u_1(x)$, $x\in\Omega_1$ holds for the outermost region $\Omega_1$, if we construct $v_1(x)$ as in the proof.

\begin{lemma}\label{decompositionlem2}
Under the assumption (A') we have
$$\sum_{\alpha=1}^N\tilde Q_k\cap(H^2(\hat\Omega^{\alpha}))=\tilde Q_k\cap(\bigoplus_{\beta=1}^NH^2(\Omega_{\beta})).$$
\end{lemma}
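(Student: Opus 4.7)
The plan is to re-run verbatim the inductive construction from the proof of Lemma \ref{decompositionlem}, starting from $u=(\tilde u_1,\dots,\tilde u_N)\in \tilde Q_k\cap(\bigoplus_{\beta=1}^N H^2(\Omega_{\beta}))$ and producing $v_\alpha\in H^2(\hat\Omega^\alpha)$ with $\sum_{\alpha=1}^N v_\alpha = u$. The only new task is to verify that each $v_\alpha$, when embedded into $\bigoplus_{\beta=1}^N H^2(\Omega_{\beta})$ via the convention described just before Lemma \ref{decompositionlem}, actually lies in $\tilde Q_k$.

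First I would fix the labeling as in the proof of Lemma \ref{decompositionlem}, so that the outermost region carries the smallest index; then $\hat\Omega^1=D$ by the definition of $\hat\Omega^\alpha$ under (A'). For every $\alpha\geq 2$ the region $\hat\Omega^\alpha$ is a bounded subregion of $D$ whose boundary lies inside $D$, so $\Omega_1\not\subset\hat\Omega^\alpha$. By the embedding convention, the first component of the embedded $v_\alpha$ (the one sitting on $\Omega_1$) is then identically zero, and the quasi-periodic condition appearing in the definition of $\tilde Q_k$, which constrains only this first component, is satisfied trivially. Thus $v_\alpha\in \tilde Q_k\cap H^2(\hat\Omega^\alpha)$ for every $\alpha\geq 2$.

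For the remaining case $\alpha=1$ I would invoke the observation highlighted in the statement preceding the lemma: by the very first step of the inductive construction, $v_1(x)=\tilde u_1^0(x)=\tilde u_1(x)$ for $x\in\Omega_1$, so the first component of the embedded $v_1$ is exactly $\tilde u_1$. Since $u\in \tilde Q_k$ by hypothesis, $\tilde u_1$ already satisfies the quasi-periodic condition, and hence $v_1\in \tilde Q_k\cap H^2(\hat\Omega^1)$. Combining the two cases establishes the nontrivial inclusion $\tilde Q_k\cap(\bigoplus_{\beta=1}^N H^2(\Omega_{\beta}))\subset \sum_{\alpha=1}^N \tilde Q_k\cap H^2(\hat\Omega^\alpha)$; the opposite inclusion is immediate because $\tilde Q_k$ is a linear subspace of $\bigoplus_{\beta=1}^N H^1(\Omega_{\beta})$ and is therefore closed under finite sums.

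No serious obstacle is expected here, since all analytic work was already carried out in Lemma \ref{decompositionlem}. The only point requiring care, and thus the mildest obstacle, is the bookkeeping between $H^2(\hat\Omega^\alpha)$ and its image in the direct sum: one must notice that the quasi-periodic condition touches only the first slot, and that among the summands $v_1,\dots,v_N$ exactly one, namely $v_1$, carries a nonzero first component, which by construction coincides with $\tilde u_1$.
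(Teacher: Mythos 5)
Your proposal is correct and follows essentially the same route as the paper, whose proof consists precisely of rerunning the construction of Lemma \ref{decompositionlem} and observing that $v_1=\tilde u_1$ on $\Omega_1$ while the remaining $v_\alpha$ vanish on $\Omega_1$, so that the quasi-periodicity condition (which constrains only the $\Omega_1$-component) is inherited. Your write-up simply makes the bookkeeping explicit.
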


\begin{remark}
The following example clearly illustrates the proof corresponding to the case of Lemma \ref{decompositionlem2} with $k=0$. Set $\Omega_1=\{x:|x|>2,\ |x_i|<3,\ i=1,\dots,n\}$, $\Omega_2=\{x:1<|x|<2\}$ and $\Omega_3=\{x: |x|<1\}$ (see Figure \ref{fig5}).
\begin{figure}[h]
\centering
\includegraphics[width=0.8\textwidth, trim={2cm 2.5cm 2.5cm 1cm}]{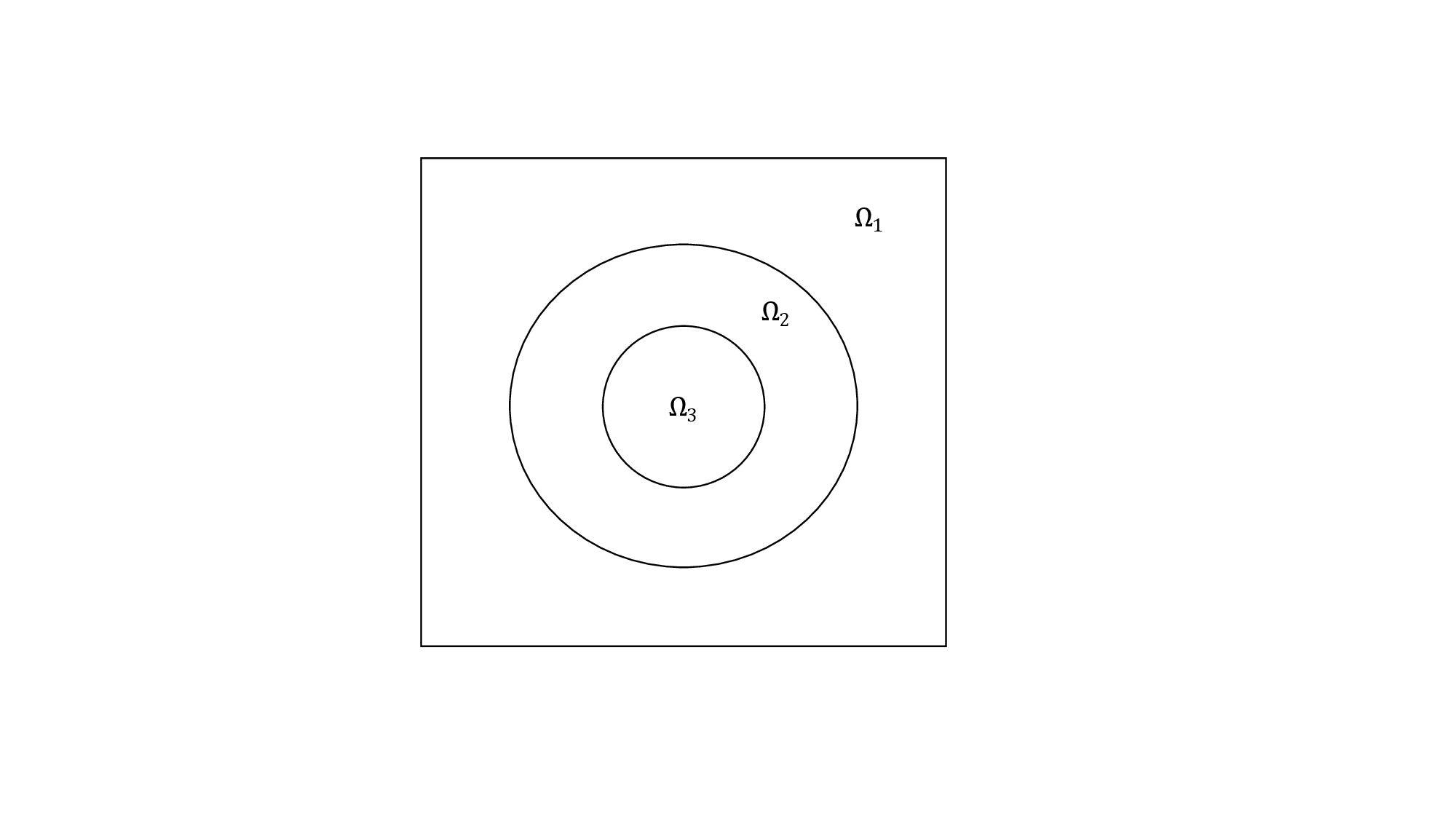}
\caption{Example of regions corresponding to Lemma \ref{decompositionlem2}}\label{fig5}
\end{figure}
Then we have $\hat\Omega^1=\{x:|x_i|<3,\ i=1,\dots n\}$, $\hat\Omega^2=\{x:|x|<2\}$ and $\hat\Omega^3=\Omega_3$. We also set $\tilde u_1(x)=1,\ x\in\Omega_1$, $\tilde u_2(x)=3,\ x\in\Omega_2$ and $\tilde u_3(x)=2,\ x\in\Omega_3$. Then by the construction of functions in the proof of Lemma \ref{decompositionlem} we obtain
\begin{align*}
&v_1(x)=1,\ x\in\hat\Omega^1,\\
&\tilde u_1^1(x)=0,\ \tilde u_2^1(x)=2,\ \tilde u_3^1(x)=1,\\
&v_2(x)=2,\ x\in \hat \Omega^2,\\
&\tilde u_1^2(x)=0,\ \tilde u_2^2(x)=0,\ \tilde u_3^2(x)=-1,\\
&v_3(x)=-1,\ x\in \hat \Omega^3,\\
&\tilde u_1^3(x)=0,\ \tilde u_2^3(x)=0,\ \tilde u_3^3(x)=0.\\
\end{align*}
\end{remark}

Let $X$ be a Hilbert space and $Y$ be a closed subspace of $X$. For $u\in X$ we set
$$\mathrm{dist}\, (u,Y):=\inf_{w\in Y}\lVert u-w\rVert_X,$$
where $\lVert \cdot\rVert_X$ is the norm in $X$. The following theorem is the main goal of this section and the main ingredient of the proof of Theorems \ref{mainthm} and \ref{mainthm2}. 

\begin{theorem}\label{distthm}
Assume (A) (resp., (A')) and set $X=\bigoplus_{\alpha=1}^NH^2(\Omega_{\alpha})$ and $Y=H^1(\mathbb R^n)\cap(\bigoplus_{\alpha=1}^NH^2(\Omega_{\alpha}))$ (resp., $X=\tilde Q_k\cap (\bigoplus_{\alpha=1}^NH^2(\Omega_{\alpha}))$ and $Y=Q_k\cap(\bigoplus_{\alpha=1}^NH^2(\Omega_{\alpha}))$). Then there exists a constant $C>0$ depending only on $\{\Omega_{\alpha}\}$ such that for any $u\in X$ we have
\begin{equation}\label{myeq2.1.0.1}
C^{-1}\mathrm{dist}\, (u,Y)\leq \sum_{\alpha\neq \beta}\lVert \tilde u_{\alpha}|_{\partial \Omega_{\alpha}}-\tilde u_{\beta}|_{\partial\Omega_{\beta}}\rVert_{H^{3/2}(\partial \Omega_{\alpha}\cap\partial\Omega_{\beta})}\leq C\mathrm{dist}\, (u,Y).
\end{equation}
\end{theorem}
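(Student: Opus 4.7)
The plan is to prove the two inequalities of \eqref{myeq2.1.0.1} separately. The right-hand inequality is an elementary consequence of Proposition~\ref{conectionpro} and trace continuity, while the left-hand inequality requires constructing an explicit approximant of $u$ in $Y$ by combining Lemma~\ref{decompositionlem} (or Lemma~\ref{decompositionlem2}) with Lemma~\ref{orthogonallem}.

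For the right-hand inequality I would take any $w=(\tilde w_1,\dots,\tilde w_N)\in Y$; Proposition~\ref{conectionpro} with $l=1$ forces $\tilde w_\alpha|_{\partial\Omega_\alpha}=\tilde w_\beta|_{\partial\Omega_\beta}$ on every nonempty $\partial\Omega_\alpha\cap\partial\Omega_\beta$, so the triangle inequality and continuity of the trace $T_0\colon H^2(\Omega_\alpha)\to H^{3/2}(\partial\Omega_\alpha)$ give
\begin{equation*}
\lVert\tilde u_\alpha|_{\partial\Omega_\alpha}-\tilde u_\beta|_{\partial\Omega_\beta}\rVert_{H^{3/2}(\partial\Omega_\alpha\cap\partial\Omega_\beta)}\le C\bigl(\lVert\tilde u_\alpha-\tilde w_\alpha\rVert_{H^2(\Omega_\alpha)}+\lVert\tilde u_\beta-\tilde w_\beta\rVert_{H^2(\Omega_\beta)}\bigr).
\end{equation*}
Summing over the finitely many interfaces and infimizing over $w\in Y$ yields the right inequality with a constant depending only on $\{\Omega_\alpha\}$.

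For the left-hand inequality I would apply Lemma~\ref{decompositionlem} (resp.\ Lemma~\ref{decompositionlem2}) to write $u=\sum_\alpha v_\alpha$ with $v_\alpha\in H^2(\hat\Omega^\alpha)$, and for each $\alpha$ let $v_\alpha^\perp\in (H_0^1(\hat\Omega^\alpha))^\perp\cap H^2(\hat\Omega^\alpha)$ be the unique weak solution of $-\Delta v+v=0$ with prescribed trace $v_\alpha|_{\partial\hat\Omega^\alpha}$ (which lies in $H^2$ by the argument proving Lemma~\ref{orthogonallem}) and set $v_\alpha^0:=v_\alpha-v_\alpha^\perp\in H_0^1(\hat\Omega^\alpha)\cap H^2(\hat\Omega^\alpha)$. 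Assumption (A) (resp.\ (A')) forces every $\Omega_\gamma$ to lie entirely inside or entirely outside $\hat\Omega^\alpha$, so the zero extension $\hat v_\alpha^0$ of $v_\alpha^0$ belongs to $H^1(\mathbb R^n)\cap\bigoplus_\gamma H^2(\Omega_\gamma)$ (resp.\ $Q_k\cap\bigoplus_\gamma H^2(\Omega_\gamma)$, using $H_0^1(D)\subset Q_k$ for the outer region $\hat\Omega^1=D$). Setting $w:=\sum_\alpha\hat v_\alpha^0\in Y$, on each $\Omega_\gamma$ one has $(u-w)|_{\Omega_\gamma}=\sum_\alpha v_\alpha^\perp|_{\Omega_\gamma}$ (with $v_\alpha^\perp$ extended by zero outside $\hat\Omega^\alpha$), whence Lemma~\ref{orthogonallem} gives
\begin{equation*}
\mathrm{dist}(u,Y)\le\lVert u-w\rVert_X\le\sum_\alpha\lVert v_\alpha^\perp\rVert_{H^2(\hat\Omega^\alpha)}\le C\sum_\alpha\lVert v_\alpha|_{\partial\hat\Omega^\alpha}\rVert_{H^{3/2}(\partial\hat\Omega^\alpha)}.
\end{equation*}

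The main obstacle is then to bound $\sum_\alpha\lVert v_\alpha|_{\partial\hat\Omega^\alpha}\rVert_{H^{3/2}}$ by $\sum_{\alpha\ne\beta}\lVert\tilde u_\alpha|_{\partial\Omega_\alpha}-\tilde u_\beta|_{\partial\Omega_\beta}\rVert_{H^{3/2}}$. Letting $\beta(\alpha)$ denote the unique index with $\hat\Omega^\alpha=\hat\Omega_{\beta(\alpha),p}$, so that $\partial\hat\Omega^\alpha=\partial\Omega_\alpha\cap\partial\Omega_{\beta(\alpha)}$, my goal is the clean identity
\begin{equation*}
v_\alpha|_{\partial\hat\Omega^\alpha}=\tilde u_\alpha|_{\partial\hat\Omega^\alpha}-\tilde u_{\beta(\alpha)}|_{\partial\hat\Omega^\alpha},
\end{equation*}
proved by induction on $\alpha$ in the ordering from the proof of Lemma~\ref{decompositionlem}. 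The recursion gives $v_\alpha|_{\partial\hat\Omega^\alpha}=\tilde u_\alpha|_{\partial\hat\Omega^\alpha}-\sum_{\gamma\in\mathrm{Anc}(\alpha)}v_\gamma|_{\partial\hat\Omega^\alpha}$, where the sum runs over the ancestors of $\alpha$ in the nesting hierarchy; the gluing operator $Z_1^{\beta,p}JT_1^{\beta,p}$, together with the fact that $J$ flips only normal derivatives and leaves boundary values invariant, makes each $v_\gamma$ an $H^2$ function across $\partial\hat\Omega^\alpha$, so its two-sided traces coincide and an analogous recursion holds for $v_{\beta(\alpha)}|_{\partial\hat\Omega^\alpha}$ with the sum over $\mathrm{Anc}(\alpha)\setminus\{\beta(\alpha)\}$. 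Substituting this into the displayed recursion causes all strict-ancestor contributions to telescope, leaving $\sum_{\gamma\in\mathrm{Anc}(\alpha)}v_\gamma|_{\partial\hat\Omega^\alpha}=\tilde u_{\beta(\alpha)}|_{\partial\hat\Omega^\alpha}$ and hence the claimed identity. Since every interface $\partial\Omega_\alpha\cap\partial\Omega_\beta$ appears as $\partial\hat\Omega^\gamma$ for exactly one $\gamma\in\{\alpha,\beta\}$, summation of the identity gives the desired bound, and the proof is complete. The principal subtlety is the careful telescoping bookkeeping of boundary traces of the auxiliary functions $v_\gamma$ along the ancestor chain; every other step is a formal application of results already in place.
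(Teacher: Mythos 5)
Your argument for case (A) is essentially the paper's proof: the same decomposition $u=\sum_\alpha v_\alpha$ from Lemma \ref{decompositionlem}, the same splitting of each $v_\alpha$ into an $H^1_0(\hat\Omega^\alpha)$ part and an $(H^1_0(\hat\Omega^\alpha))^\perp$ part (your $v_\alpha^0,v_\alpha^\perp$ are the paper's $\psi_\alpha,\phi_\alpha$), and Lemma \ref{orthogonallem} applied to the latter. The only real difference is how you identify $v_\alpha|_{\partial\hat\Omega^\alpha}$ with the jump of $u$ across the corresponding interface: you run a telescoping induction along the ancestor chain of the recursion, whereas the paper obtains the same identity \eqref{myeq2.1.2} in one step from $u=\Psi+\Phi$ with $\Psi\in H^1(\mathbb R^n)$, observing that exactly one $\phi_\gamma$ (the one with $\partial\hat\Omega^\gamma=\partial\Omega_\alpha\cap\partial\Omega_\beta$) is discontinuous across a given interface, every other summand being either $H^2$ across it or zero on both sides. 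Your induction is correct but does more bookkeeping than necessary; the right-hand inequality is handled identically in both.

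There is, however, a genuine gap in your treatment of case (A'). You decompose the outermost piece $v_1\in H^2(\hat\Omega^1)=H^2(D)$ as $v_1^0+v_1^\perp$ with $v_1^0\in H^1_0(D)$ and $v_1^\perp\in(H^1_0(D))^\perp$ carrying the trace $v_1|_{\partial D}$, and you keep only $v_1^0$ in the approximant $w$. This leaves the term $\lVert v_1^\perp\rVert_{H^2(D)}\leq C\lVert v_1|_{\partial D}\rVert_{H^{3/2}(\partial D)}$ in your estimate of $\lVert u-w\rVert_X$, and $\partial D$ is not an interface $\partial\Omega_\alpha\cap\partial\Omega_\beta$: the trace $v_1|_{\partial D}$ is in general nonzero even when $u$ has no jumps at all (take $u\in Y$), so this term cannot be dominated by the jump semi-norm and the left inequality does not follow. (A secondary problem: $\partial D$ is the boundary of a parallelepiped, hence not of class $C^3$, so Lemma \ref{orthogonallem} is not applicable to $\hat\Omega^1=D$ in the first place.) The fix is what the paper does implicitly: the outermost region must not be decomposed. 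Since $v_1\in\tilde Q_k\cap H^2(D)$ is automatically continuous across all internal interfaces and satisfies the quasi-periodicity condition, it already lies in $Y=Q_k\cap(\bigoplus_{\gamma}H^2(\Omega_{\gamma}))$, so one should take $v_1^\perp:=0$ and put all of $v_1$ into $w$; in case (A) this happens automatically because $\partial\hat\Omega^1=\emptyset$ forces $v_1^\perp=0$. With that correction your argument goes through.
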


\begin{remark}
(1) Theorem \ref{distthm} means that the distance $\mathrm{dist}\, (u,Y)$ is equivalent to the degree of discontinuity on the intersections of the boundaries. Note also that these values are semi-norms and the result can be regarded as equivalence of the semi-norms. We can confirm that $\mathrm{dist}\, (u,Y)$ is a semi-norm as follows. The conditions $\mathrm{dist}\, (u,Y)\geq0$ and $\mathrm{dist}\, (au,Y)=|a|\mathrm{dist}\, (u,Y)$ are obvious. To see $\mathrm{dist}\, (u+v,Y)\leq\mathrm{dist}\, (u,Y)+\mathrm{dist}\, (v,Y)$, for any $\epsilon>0$ choose $w_1,w_2\in Y$ such that $\lVert u-w_1\rVert_X\leq\mathrm{dist}\, (u,Y)+\epsilon,\ \lVert v-w_2\rVert_X\leq\mathrm{dist}\, (v,Y)+\epsilon$, and note $\lVert u+v-(w_1+w_2)\rVert_X\leq\mathrm{dist}\, (u,Y)+\mathrm{dist}\, (v,Y)+2\epsilon$.

(2) Since the true eigenfunctions are in $Y$, it follows from the right inequality that in order to approximate a true eigenfunction by $u\in \bigoplus_{\alpha=1}^NH^2(\Omega_{\alpha})$ we must make the semi-norm $\sum_{\alpha\neq \beta}\lVert \tilde u_{\alpha}|_{\partial \Omega_{\alpha}}-\tilde u_{\beta}|_{\partial\Omega_{\beta}}\rVert_{H^{3/2}(\partial \Omega_{\alpha}\cap\partial\Omega_{\beta})}$ small. From this fact we can see that Theorems \ref{mainthm} and \ref{mainthm2} are plausible. However, it is the left inequality that is used for the proof of the theorems, and it is much more difficult to prove than the right inequality.
\end{remark}

\begin{proof}[Proof of  Theorem \ref{distthm}]
First let us consider the case of (A). Let us prove the left inequality. We decompose $u=(\tilde u_1,\dots,\tilde u_N)\in \bigoplus_{\beta=1}^NH^2(\Omega_{\beta})$ as $u=\sum_{\alpha=1}^Nv_{\alpha},\ v_{\alpha}\in H^2(\hat\Omega^{\alpha})$ as in Lemma \ref{decompositionlem}. We decompose further $v_{\alpha}$ as $v_{\alpha}=\psi_{\alpha}+\phi_{\alpha}$,
\begin{align*}
\psi_{\alpha}&:=(-\Delta^D+1)^{-1}(-\Delta+1)v_{\alpha},\\
\phi_{\alpha}&:=v_{\alpha}-(-\Delta^D+1)^{-1}(-\Delta+1)v_{\alpha},
\end{align*}
where $-\Delta^D$ is the Dirichlet Laplacian in $\hat \Omega^{\alpha}$ and $-\Delta$ means the usual weak derivative.
Then $\psi_{\alpha}\in H^1_0(\hat\Omega^{\alpha})\cap H^2(\hat\Omega^{\alpha})$ and $\phi_{\alpha}\in (H^1_0(\hat\Omega^{\alpha}))^{\perp}\cap H^2(\hat\Omega^{\alpha})$, where $\phi_{\alpha}\in (H^1_0(\hat\Omega^{\alpha}))^{\perp}$ follows from $(-\Delta+1)\phi_{\alpha}=0$. (Note that $\hat\Omega^1=\mathbb R^n$ and $\psi_1=v_1,\ \phi_1=0$ if we label $\Omega_{\alpha}$ as in the proof of Lemma \ref{decompositionlem}.)

We set
$$\Psi:=\sum_{\alpha=1}^N\psi_{\alpha},\qquad \Phi:=\sum_{\alpha=1}^N\phi_{\alpha},$$
where the summation is in the sense of $\bigoplus_{\beta=1}^NH^2(\Omega_{\beta})$.
Then $u=\Psi+\Phi$ and $\Psi\in H^1(\mathbb R^n)\cap(\bigoplus_{\beta=1}^NH^2(\Omega_{\beta}))$. (Here note that since the image of $\psi_{\alpha}$ by the trace operator in $\hat\Omega^{\alpha}$ is $0$, $\psi_{\alpha}$ is extended to an element in $H^1(\mathbb R^n)$ by $\psi_{\alpha}(x)=0$ for $x\in\mathbb R^n\setminus\hat\Omega^{\alpha}$ by Proposition \ref{conectionpro}.)

By Lemma \ref{orthogonallem} it follows that there exists a constant $C>0$  independent of $\Phi$ such that
\begin{equation}\label{myeq2.1.1}
\lVert \Phi\rVert_X\leq \sum_{\alpha=1}^N\lVert \phi_{\alpha}\rVert_{H^2(\hat \Omega^{\alpha})}\leq C\sum_{\alpha=1}^N\lVert \phi_{\alpha}|_{\partial\hat\Omega^{\alpha}}\rVert_{H^{3/2}(\partial\hat\Omega^{\alpha})}.
\end{equation}
Assume $\alpha$ and $\beta$ satisfying $\alpha\neq\beta$ are fixed. Then since $\Psi\in H^1(\mathbb R^n)$, $\phi_{\alpha'}\in H^2(\hat\Omega^{\alpha'})$ and $\phi_{\alpha'}(x)=0,\ x\in\mathbb R^n\setminus\hat\Omega_{\alpha'}$, the difference of restrictions on $\partial\Omega_{\alpha}\cap\partial\Omega_{\beta}$ of $H^2(\Omega_{\alpha})$-component and $H^2(\Omega_{\beta})$-component of $\Psi$ or $\phi_{\alpha'},\alpha'\neq \alpha,\beta$ is $0$. Thus from $u=\Psi+\Phi$ we obtain
\begin{equation}\label{myeq2.1.2}
\sum_{\alpha\neq \beta}\lVert \tilde u_{\alpha}|_{\partial\Omega_{\alpha}}-\tilde u_{\beta}|_{\partial\Omega_{\beta}}\rVert_{H^{3/2}(\partial\Omega_{\alpha}\cap\partial\Omega_{\beta})}=\sum_{\alpha=1}^N\lVert \phi_{\alpha}|_{\partial\hat\Omega^{\alpha}}\rVert_{H^{3/2}(\partial\hat\Omega^{\alpha})}.
\end{equation}
The left inequality in \eqref{myeq2.1.0.1} follows immediately from $\Psi\in H^1(\mathbb R^n)$, \eqref{myeq2.1.1} and \eqref{myeq2.1.2}.

As for the right inequality, since $\tilde w_{\alpha}|_{\partial \Omega_{\alpha}}-\tilde w_{\beta}|_{\partial \Omega_{\beta}}=0$ on $\partial \Omega_{\alpha}\cap\partial\Omega_{\beta}$ for $w=(\tilde w_1,\dots,\tilde w_N)\in Y\subset X$, there exists a constant $C_{\alpha\beta}$ depending only on $\Omega_{\alpha}$ and $\Omega_{\beta}$ such that
\begin{align*}
&\lVert \tilde u_{\alpha}|_{\partial\Omega_{\alpha}}-\tilde u_{\beta}|_{\partial\Omega_{\beta}}\rVert_{H^{3/2}(\partial\Omega_{\alpha}\cap\partial\Omega_{\beta})}\\
&\quad=\lVert (\tilde u_{\alpha}-\tilde w_{\alpha})|_{\partial\Omega_{\alpha}}-(\tilde u_{\beta}-\tilde w_{\beta})|_{\partial\Omega_{\beta}}\rVert_{H^{3/2}(\partial\Omega_{\alpha}\cap\partial\Omega_{\beta})}\\
&\quad\leq C_{\alpha\beta}(\lVert \tilde u_{\alpha}-\tilde w_{\alpha}\rVert_{H^2(\Omega_{\alpha})}+\lVert \tilde u_{\beta}-\tilde w_{\beta}\rVert_{H^2(\Omega_{\beta})}),
\end{align*}
for any $w\in Y$, where we used the boundedness of the trace operator in the last inequality. The right inequality in \eqref{myeq2.1.0.1} follows immediately from this inequality.
The case of (A') is proved in exactly the same way using Lemma \ref{decompositionlem2} instead of Lemma \ref{decompositionlem}.
\end{proof}

Finally we need a lemma about orthonormalization of a basis in a Hilbert space. For a vector $x\in\mathbb C^M$ we define norms by $\lVert x\rVert_1:=\sum_{i=1}^M|x_i|,$ $\lVert x\rVert_2:=(\sum_{i=1}^M|x_i|^2)^{1/2}$ and $\lVert x\rVert_{\infty}:=\max_{1\leq i\leq M}|x_i|$. Correspondingly to $\lVert x \rVert_l$ we define the matrix norm for an $M\times M$ matrix $A$ by $\lVert A\rVert_l:=\sup_{x\neq 0}\frac{\lVert Ax\rVert_l}{\lVert x\rVert_l}$. It is well known that $\lVert A\rVert_1=\max_{1\leq j\leq M}\sum_{i=1}^M|a_{ij}|$ and $\lVert A\rVert_{\infty}=\max_{1\leq i\leq M}\sum_{j=1}^M|a_{ij}|$.

\begin{lemma}\label{Schmidt}
Let $\Psi^i,\dots,\Psi^M$ be linearly independent vectors in a Hilbert space $Z$ such that if we define a matrix $\mathcal E=(\epsilon_{ij})$ by
$$\langle \Psi^i,\Psi^j\rangle =\delta_{ij}+\epsilon_{ij},$$
we have $2\lVert\mathcal E\rVert_1+\epsilon_{\max}<1$, where $\epsilon_{\max}:=\max_{1\leq i\leq M}|\epsilon_{ii}|$. Then there exists an $M\times M$ matrix $B$ such that $(\hat\Psi^1,\dots,\hat\Psi^M)^T:=B(\Psi^1,\dots,\Psi^M)^T$ satisfies $\langle \hat\Psi^i,\hat\Psi^j\rangle=\delta_{ij}$ and $\lVert B-I_M\rVert_{\infty}<(2\lVert\mathcal E\rVert_1+\epsilon_{\max})(1-2\lVert\mathcal E\rVert_1-\epsilon_{\max})^{-1}$.
\end{lemma}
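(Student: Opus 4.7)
The plan is to construct $B$ explicitly by applying the Gram--Schmidt orthonormalization procedure to $\Psi^1,\dots,\Psi^M$ and to bound $\lVert B-I_M\rVert_\infty$ row by row. Writing $\hat\Psi^k=\chi^k/\lVert\chi^k\rVert$ with $\chi^k:=\Psi^k-\sum_{j<k}\langle\hat\Psi^j,\Psi^k\rangle\hat\Psi^j$ and expanding $\hat\Psi^k=\sum_{i\le k}b_{ki}\Psi^i$, the matrix $B=(b_{ki})$ is lower triangular, so the target row-sum bound becomes $s_k:=\sum_i|b_{ki}-\delta_{ki}|\le r/(1-r)$ for each $k$, where $r:=2\lVert\mathcal E\rVert_1+\epsilon_{\max}$.

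For the base case $k=1$, one has $b_{11}=(1+\epsilon_{11})^{-1/2}$, whence $s_1=|(1+\epsilon_{11})^{-1/2}-1|\le\epsilon_{\max}/(1-\epsilon_{\max})\le r/(1-r)$. For the inductive step, I would use the key identity
\[
\langle\hat\Psi^j,\Psi^k\rangle=\sum_{i\le j}\overline{b_{ji}}\,\epsilon_{ik}\qquad(j<k),
\]
valid because $i\le j<k$ kills the $\delta_{ik}$ contribution, together with the induction hypothesis on rows $1,\dots,k-1$ to control these Gram--Schmidt projections. Plugging into the expansion of $\chi^k$ in $\{\Psi^i\}$ gives $b_{ki}=-\lVert\chi^k\rVert^{-1}\sum_{j:\,i\le j<k}\langle\hat\Psi^j,\Psi^k\rangle b_{ji}$ for $i<k$ and $b_{kk}=\lVert\chi^k\rVert^{-1}$, while the identity $\lVert\chi^k\rVert^2=1+\epsilon_{kk}-\sum_{j<k}|\langle\hat\Psi^j,\Psi^k\rangle|^2$ controls $|1-\lVert\chi^k\rVert^{-1}|$. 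Combining and rearranging produces a recursive inequality of the form $s_k\le r(1+s_{k-1})$, whose fixed-point solution is exactly $r/(1-r)$, closing the induction.

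The main obstacle is the tight bookkeeping that produces precisely the coefficient $2$ in $r=2\lVert\mathcal E\rVert_1+\epsilon_{\max}$. One occurrence of $\lVert\mathcal E\rVert_1$ enters as the column sum $\sum_i|\epsilon_{ik}|$ when bounding $|\langle\hat\Psi^j,\Psi^k\rangle|$; a second enters when redistributing these projection coefficients back onto the $\Psi^i$-basis through the already-controlled rows $b_{j,\cdot}$, which individually have $\ell^1$-norm $\le 1+r/(1-r)=1/(1-r)$. The separate $\epsilon_{\max}$ summand is the diagonal contribution coming from $\lVert\Psi^k\rVert^2=1+\epsilon_{kk}$ via the normalization factor $\lVert\chi^k\rVert^{-1}$. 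The hypothesis $2\lVert\mathcal E\rVert_1+\epsilon_{\max}<1$ is exactly what is needed to keep $r<1$ so that the induction closes and the resulting matrix $B$ is well-defined.
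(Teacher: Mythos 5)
Your overall strategy (Gram--Schmidt plus a perturbative bound on the transformation matrix) is the same as the paper's, but your recursive, row-by-row formulation has a genuine gap in the inductive step. From $b_{ki}=-\lVert\chi^k\rVert^{-1}\sum_{j:\,i\le j<k}\langle\hat\Psi^j,\Psi^k\rangle b_{ji}$ and $\langle\hat\Psi^j,\Psi^k\rangle=\sum_{l\le j}\overline{b_{jl}}\,\epsilon_{lk}$ you get
\begin{equation*}
\sum_{i<k}|b_{ki}|\;\le\;\lVert\chi^k\rVert^{-1}\Bigl(\max_{j<k}\textstyle\sum_{i\le j}|b_{ji}|\Bigr)\sum_{l<k}|\epsilon_{lk}|\Bigl(\sum_{j:\,l\le j<k}|b_{jl}|\Bigr),
\end{equation*}
and the last factor is a \emph{column} sum of the already-constructed block of $B$. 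Your induction hypothesis controls only the \emph{row} sums $s_j$, and your heuristic remark that the ``already-controlled rows $b_{j,\cdot}$ individually have $\ell^1$-norm $\le 1/(1-r)$'' conflates the two: from $|b_{jl}|\le s_j\le r/(1-r)$ for $j\neq l$ one only gets $\sum_{j:\,l\le j<k}|b_{jl}|\le 1/(1-r)+(k-1)r/(1-r)$, which grows linearly in $k$ and destroys the claimed recursion $s_k\le r(1+s_{k-1})$. As written, the argument therefore yields a bound with an extra factor of order $M$, not the constant $(2\lVert\mathcal E\rVert_1+\epsilon_{\max})(1-2\lVert\mathcal E\rVert_1-\epsilon_{\max})^{-1}$ of the statement. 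To close the gap you would need to run a joint induction that also bounds the column sums $\lVert B_{<k}\rVert_1$ (which is plausible since $\mathcal E$ is Hermitian, so $\lVert\mathcal E\rVert_\infty=\lVert\mathcal E\rVert_1$, but it is additional work you have not done).

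The paper sidesteps this issue by not expressing the projection coefficients recursively through earlier rows of $B$ at all: it writes the orthogonal projection onto $\mathcal L(\Psi^1,\dots,\Psi^{i-1})$ directly via the inverse Gram matrix, $\tilde b_{ij}=\sum_k(A_{i-1}^{-1})_{jk}\langle\Psi^k,\Psi^i\rangle$, and dominates $|(A_{i-1}^{-1})_{jk}|$ entrywise by the Neumann series $(I-|\mathcal E|)^{-1}$, whose column-sum norm is at most $(1-\lVert\mathcal E\rVert_1)^{-1}$. This gives the needed column-sum control in one stroke and leads to the stated constants. Your Pythagorean identity for $\lVert\chi^k\rVert$ is fine (indeed it would give a slightly sharper, quadratic, bound for that term than the paper's linear estimate), but the core double-sum estimate must be reorganized along these lines for the proof to go through.
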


\begin{proof}
We use the Schmidt orthonormalization.
The Schmidt orthonormalization can be written as
\begin{equation}\label{myeq2,2}
\begin{split}
\tilde\Psi^1&:=\Psi^1,\\
\tilde\Psi^i&:=\Psi^i-\sum_{1\leq j,k\leq i-1}\{(A_{i-1}^{-1})_{jk}\langle\Psi^{k},\Psi^i\rangle\}\Psi^{j},\ 2\leq i\leq M,\\
\hat\Psi^i&:=\lVert \tilde\Psi^i\rVert^{-1}\tilde\Psi^i,\ 1\leq i\leq M,\\
\end{split}
\end{equation}
where $A_{i-1}$ is the $(i-1)\times(i-1)$ matrix defined by $(A_{i-1})_{jk}:=\langle\Psi^{j},\Psi^{k}\rangle,\ 1\leq j,k\leq i-1$, and $(A_{i-1}^{-1})_{jk}$ denotes the $(j,k)$-component of $A_{i-1}^{-1}$  and $\lVert \cdot\rVert$ is the norm in the Hilbert space $Z$. (Note that the orthogonal projection of $\Psi$ onto $\mathcal L(\Psi^1,\dots,\Psi^{i-1})$ is given by $\sum_{1\leq j,k\leq i-1}\{(A_{i-1}^{-1})_{jk}\langle\Psi^{k},\Psi\rangle\}\Psi^{j}$.) We set $M\times M$ matrices $\tilde B=(\tilde b_{ij})$, $F$ and $B$ by
$$\tilde b_{ij}:=\sum_{k=1}^{i-1}(A_{i-1}^{-1})_{jk}\langle\Psi^{k},\Psi^i\rangle,$$
$$F:=\mathrm{diag}\, [\lVert \tilde\Psi^1\rVert^{-1},\dots,\lVert \tilde\Psi^M\rVert^{-1}],$$
and $B:=F(I_M-\tilde B)$, where $I_M$ is the $M\times M$ identity matrix. Then $(\hat\Psi^1,\dots,\hat\Psi^M)^T\newline:=B(\Psi^1,\dots,\Psi^M)^T$ is obvious from \eqref{myeq2,2}, and it remains to prove the estimate of the norm of $B$. 

Let us define $M\times M$ matrices $\tilde A'=(\tilde a'_{jk})$ and $A'=(a'_{jk})$ by $\tilde a'_{jk}:=|\epsilon_{jk}|$ and $A':=\sum_{m=0}^{\infty}(\tilde A')^m=(I_M-\tilde A')^{-1}$ respectively. Then it is easily seen that $\lVert A'\rVert_1\leq (1-\lVert \mathcal E\rVert_1)^{-1}$.  We also set $\tilde A_{i-1}:=A_{i-1}-I_{i-1}$. Using the Neumann series $A_{i-1}^{-1}=\sum_{m=0}^{\infty}(-1)^m\tilde A_{i-1}^m$, noting $(\tilde A_{i-1})_{jk}=\epsilon_{jk}$ and considering the expression of the components of products $\tilde A_{i-1}^m$ of matrices by the components of $\tilde A_{i-1}$, we can easily see that $|(A_{i-1}^{-1})_{jk}|\leq a'_{jk}$ for $1\leq j,k\leq i-1$. Thus we have
\begin{equation*}
\sum_{j=1}^M|\tilde b_{ij}|\leq \sum_{j=1}^M\sum_{k=1}^{i-1}a'_{jk}|\epsilon_{ki}|\leq \lVert A'\rVert_1\lVert \mathcal E\rVert_1\leq \lVert \mathcal E\rVert_1(1-\lVert \mathcal E\rVert_1)^{-1},
\end{equation*}
which implies $\lVert\tilde B\rVert_{\infty}\leq\lVert \mathcal E\rVert_1(1-\lVert \mathcal E\rVert_1)^{-1}$.
Moreover, using $|\lVert \Psi^j\rVert-1|\leq|\lVert \Psi^j\rVert+1||\lVert \Psi^j\rVert-1|=|\lVert \Psi^j\rVert^2-1|=|\epsilon_{jj}|\leq \epsilon_{\max}$ and \eqref{myeq2,2} we obtain
\begin{equation*}
\lVert\tilde \Psi^i-\Psi^i\rVert\leq (1+\epsilon_{\max})\sum_{j=1}^{i-1}|\tilde b_{ij}|\leq (1+\epsilon_{\max})\lVert \mathcal E\rVert_1(1-\lVert \mathcal E\rVert_1)^{-1}.
\end{equation*}
Thus it follows that $\lVert\tilde \Psi^i\rVert\geq (1-2\lVert \mathcal E\rVert_1-\epsilon_{\max})(1-\lVert \mathcal E\rVert_{1})^{-1}$ and
\begin{equation*}
|1-\lVert \tilde \Psi^i\rVert^{-1}|=|\lVert \tilde \Psi^i\rVert-1|\lVert \tilde \Psi^i\rVert^{-1}\leq(\lVert \mathcal E\rVert_1+\epsilon_{\max})(1-2\lVert \mathcal E\rVert_1-\epsilon_{\max})^{-1},
\end{equation*}
where for the estimate of $|\lVert \tilde \Psi^i\rVert-1|$ we used that
$$|\lVert \tilde \Psi^i\rVert-1|=\begin{cases}
\lVert \tilde \Psi^i\rVert-1\leq \lVert \tilde\Psi^i-\Psi^i\rVert +\lVert\Psi^i\rVert-1 &(\lVert \tilde\Psi^i\rVert\geq 1)\\
1-\lVert \tilde \Psi^i\rVert\leq 1 -\lVert\Psi^i\rVert +\lVert \tilde\Psi^i-\Psi^i\rVert   &(\lVert \tilde\Psi^i\rVert< 1)\\
\end{cases},$$
that is, $|\lVert \tilde \Psi^i\rVert-1|\leq \lVert \tilde\Psi^i-\Psi^i\rVert +|\lVert\Psi^i\rVert-1|$.
Hence we can see that $\lVert F-I_M\rVert_{\infty}\leq (\lVert \mathcal E\rVert_{1}+\epsilon_{\max})(1-2\lVert \mathcal E\rVert_1-\epsilon_{\max})^{-1}$ and $\lVert F\tilde B\rVert_{\infty}\leq\lVert \mathcal E\rVert_{1}(1-2\lVert \mathcal E\rVert_1-\epsilon_{\max})^{-1}$. The result follows from these inequalities.
\end{proof}

\section{Proof of the main result}\label{thirdsec}
\begin{proof}[Proof of Theorem \ref{mainthm}]
Since $H=(h_{ij})$ is diagonalized by an $M\times M$ unitary matrix $Y=(y_{ij})$ as $\bar YHY^T=\mathrm{diag}\, [\tilde E_1,\dots,\tilde E_M]$, denoting $Y(u^1,\dots,u^M)^T$ again by $(u^1,\dots,u^M)^T$ we may assume that $H$ is a diagonal matrix from the beginning. However, we need to note that if we denote the old functions by $(u^1_{\mathrm{Old}},\dots,u^1_{\mathrm{Old}})^T$ and the new ones by $(u^1_{\mathrm{New}},\dots,u^1_{\mathrm{New}})^T=Y(u^1_{\mathrm{Old}},\dots,u^1_{\mathrm{Old}})^T$, the discontinuity of the new functions is estimated as
\begin{equation}\label{myeq3.0.0.2}
\begin{split}
&\lVert (\tilde u_{\mathrm{New}}^i)_{\alpha}|_{\partial \Omega_{\alpha}}-(\tilde u_{\mathrm{New}}^i)_{\beta}|_{\partial\Omega_{\beta}}\rVert_{H^{3/2}(\partial \Omega_{\alpha}\cap\partial\Omega_{\beta})}\\
&\quad\leq \sum_{j=1}^M\lVert (\tilde u_{\mathrm{Old}}^j)_{\alpha}|_{\partial \Omega_{\alpha}}-(\tilde u_{\mathrm{Old}}^j)_{\beta}|_{\partial\Omega_{\beta}}\rVert_{H^{3/2}(\partial \Omega_{\alpha}\cap\partial\Omega_{\beta})}.
\end{split}
\end{equation}
If there exists $M'< M$ such that $\tilde E_{M'+1},\dots,\tilde E_M> d$, from the assumption $E_i\leq d$ the inequality of Theorem \ref{mainthm} for these eigenvalues are obvious and we have only to consider $(u^1,\dots,u^{M'})$ and $M'\times M'$ matrix with entries $h_{ij},\ 1\leq i,j\leq M'$. Thus we may assume that $\tilde E_1,\dots,\tilde E_M\leq d$ from the beginning. Since
$$\langle \nabla u^i,\nabla u^i\rangle+\langle u^i, Vu^i\rangle=\tilde E_i\leq d,$$
by the assumption (B) we have
$$(1-a)\lVert\nabla u^i\rVert^2-b\lVert u^i\rVert^2\leq d,$$
that is,
$$\lVert \nabla u^i\rVert\leq \left(\frac{d+b}{1-a}\right)^{1/2},$$
where $\lVert w\rVert:=\langle w,w\rangle^{1/2}$.
By Theorem \ref{distthm} we can decompose $u^i$ as $u^i=\Psi^i+\Phi^i$, where $\Psi^i\in H^1(\mathbb R^n)\cap(\bigoplus_{\alpha=1}^NH^2(\Omega_{\alpha}))$ and
\begin{equation}\label{myeq3.0.0.1}
\lVert \Phi^i\rVert_{\bigoplus_{\alpha=1}^NH^2(\Omega_{\alpha})}\leq C\sum_{\alpha\neq \beta}\lVert \tilde u_{\alpha}^i|_{\partial \Omega_{\alpha}}-\tilde u_{\beta}^i|_{\partial\Omega_{\beta}}\rVert_{H^{3/2}(\partial \Omega_{\alpha}\cap\partial\Omega_{\beta})}.
\end{equation}
Henceforth, we denote constants independent of $M$ and $u^1,\dots,u^M$ by $C$.
Then $u^i=\Psi^i+\Phi^i$ yields
$$\lVert\nabla \Psi^i\rVert\leq C+\lVert \nabla\Phi^i\rVert,$$
where $C:=\left(\frac{d+b}{1-a}\right)^{1/2}$. 
Moreover, from $\lVert u^i\rVert=1$ it follows that
$$\lVert \Psi^i\rVert\leq 1+\lVert \Phi^i\rVert.$$

We define an $M\times M$ matrix $\check H=(\check h_{ij})$ by
$$\check h_{ij}:=\langle \nabla\Psi^i,\nabla\Psi^j\rangle +\langle\Psi^i,V\Psi^j\rangle.$$
Then we have 
\begin{equation}\label{myeq3.0.0.3}
\lVert H-\check H\rVert_{\infty}\leq C((1+\epsilon')\epsilon''+\epsilon'M),
\end{equation}
where $\epsilon':=\max_{1\leq i\leq M}(\lVert\Phi^i\rVert+\lVert\nabla\Phi^i\rVert)$ and $\epsilon'':=\sum_{j=1}^M(\lVert\Phi^j\rVert+\lVert\nabla\Phi^j\rVert)$.
By Lemma \ref{Schmidt} if 
\begin{equation}\label{myeq3.0.1}
3((1+\epsilon')\epsilon''+\epsilon'\sqrt M)<1/2,
\end{equation}
we can see that there exists an $M\times M$ matrix $B$ such that $(\hat\Psi^1,\dots,\hat\Psi^M)^T:=B(\Psi^1,\dots,\Psi^M)^T$ satisfies $\langle \hat\Psi^i,\hat\Psi^j\rangle=\delta_{ij}$, and $\lVert B-I_M\rVert_{\infty}\leq C(\epsilon''+\epsilon'\sqrt M)$. Here we used that
$$\langle\Psi^i,\Psi^j\rangle=\langle\Psi^i,(\Psi^j-u^j)\rangle+\langle(\Psi^i-u^i),u^j\rangle+\langle u^i,u^j\rangle=-\langle\Psi^i,\Phi^j\rangle-\langle\Phi^i,u^j\rangle+\delta_{ij},$$
and $\sum_{j=1}^M|\langle\Phi^i,u^j\rangle|\leq\sqrt M\lVert\Phi^i\rVert$ by the Bessel inequality and the Cauchy-Schwarz inequality.
We define $\hat H=(\hat h_{ij})$ by
$$\hat h_{ij}:=\langle \nabla\hat\Psi^i,\nabla\hat\Psi^j\rangle +\langle\hat\Psi^i,V\hat\Psi^j\rangle.$$
Then $\hat H=\bar B\check H B^T$ and $\lVert \hat H-\check H\rVert_{\infty}\leq CM(\epsilon''+\epsilon'\sqrt M)(1+\epsilon''+\epsilon'\sqrt M)$, which yields
\begin{equation}\label{myeq3.0.0.4}
\lVert \hat H-\check H\rVert_{\infty}\leq CM(\epsilon''+\epsilon'\sqrt M),
\end{equation}
under \eqref{myeq3.0.1}.
It follows from \eqref{myeq3.0.0.3} and \eqref{myeq3.0.0.4} that $\lVert H-\hat H\rVert_{\infty}\leq C\tilde\epsilon$ with $\tilde\epsilon:=\epsilon''+\epsilon'M+M(\epsilon''+\epsilon'\sqrt M)$ under \eqref{myeq3.0.1}.
Since $\hat\Psi^i\in H^1(\mathbb R^n)$, by the Rayleigh-Ritz method (cf. Lemma \ref{Rayleigh-Ritz}) the eigenvalues $\hat E_i$ of $\hat H$ are estimated as $\hat E_i\geq E_i,\ i=1,\dots,M$.  By the perturbation theorem of Hermitian matrices (cf. \cite[Problem 1.9.2]{Ch}) and $\lVert \cdot\rVert_2\leq \sqrt M\lVert\cdot\rVert_{\infty}$ which can easily be confirmed, we can also see that $|E_i-\hat E_i|\leq\lVert H-\hat H\rVert_{2}\leq\sqrt M\lVert H-\hat H\rVert_{\infty}$. Thus we obtain
\begin{equation}\label{myeq3.1}
\tilde E_i\geq \hat E_i-C\tilde \epsilon\sqrt M\geq E_i-C\tilde \epsilon\sqrt M.
\end{equation}
The result follows immediately from \eqref{myeq3.1}, \eqref{myeq3.0.0.1} and \eqref{myeq3.0.0.2}.
\end{proof}

The proof of Theorem \ref{mainthm2} is almost the same as that of Theorem \ref{mainthm}. The difference is that we use the case of (A') of Theorem \ref{distthm}. For the estimate $\hat E_i\geq E_i$ we use that $-\Delta+V$ with the domain $\mathcal D_k$ is the selfadjoint operator associated with the quadratic form $\langle \nabla u,\nabla u\rangle_D+\langle u,Vu\rangle_D$ with the form domain $Q_k$, which is proved in the appendix.

\section{Numerical example}\label{fourthsec}
In this section we see how the discontinuity of the function affects evaluation of the eigenvalue in a simple example. We consider the eigenvalue problem $(-\Delta +V)u=Eu$ in $\mathbb R^3$ with
$$V(x):=\begin{cases}
-V_0 & |x|<a\\
0 & |x|\geq a
\end{cases},$$
where $V_0>0$ is a constant. If the function space is restricted to that with angular momentum $0$, the solution is given by
$$u(x)=(4\pi)^{-1/2}r^{-1}\chi(r),$$
where $r:=|x|$, $\chi(r)=A\sin\alpha r,\ r<a$ and $\chi(r)=Ce^{-\beta r},\ r\geq a$ (cf. \cite[Section 15]{Sch}). Here $A, C\in \mathbb R$ are constants and $\alpha:=(V_0-|E|)^{1/2}$, $\beta:=|E|^{1/2}$. We set $\Omega_1:=\{x:|x|>a\}$ and $\Omega_2:=\{x: |x|<a\}$. The condition that the radial function is linearly dependent at $|x|=a$ is given by $\alpha\sin\alpha a-\beta\cos \alpha a=(V_0-|E|)^{1/2}\sin (V_0-|E|)^{1/2}a-|E|^{1/2}\cos(V_0-|E|)^{1/2} a=0$.
The eigenvalue is determined by this condition. If $\frac{\pi^2}{4}<V_0a^2\leq\frac{9\pi^2}{4}$, there exists only one eigenvalue. When $V_0=1$ and $a=\pi$, by solving the equation above numerically, we obtain the eigenvalue $E_1=-0.457591$. The coefficients for the normalized eigenfunction are $A=0.657960$ and $C=4.05791$.

We change $A$ and $C$ (increase $A$ and decrease $C$) under $\lVert u\rVert=1$ and evaluate
$$\tilde E_1=\langle \nabla u,\nabla u\rangle+\langle u,Vu\rangle.$$
Thus $u$ is no longer continuous on the boundary $\partial\Omega_1=\partial\Omega_2=\{x:|x|=a\}$. We obtain the estimate
\begin{equation}\label{myeq4.1}
\tilde E_1\geq-0.457591-0.285781\lVert u|_{\partial\Omega_1}- u|_{\partial\Omega_2}\rVert_{L^2(\partial\Omega_2)}-O(\lVert u|_{\partial\Omega_1}- u|_{\partial\Omega_2}\rVert_{L^2(\partial\Omega_2)}^2).
\end{equation}
The graph of $\tilde E_1$ as a function of $\Gamma:=\lVert u|_{\partial\Omega_1}- u|_{\partial\Omega_2}\rVert_{L^2(\partial\Omega_2)}$ is given in Figure \ref{graph1}.
\begin{figure}[h]
\centering
\includegraphics[width=0.8\textwidth, trim={0 0 0 0}]{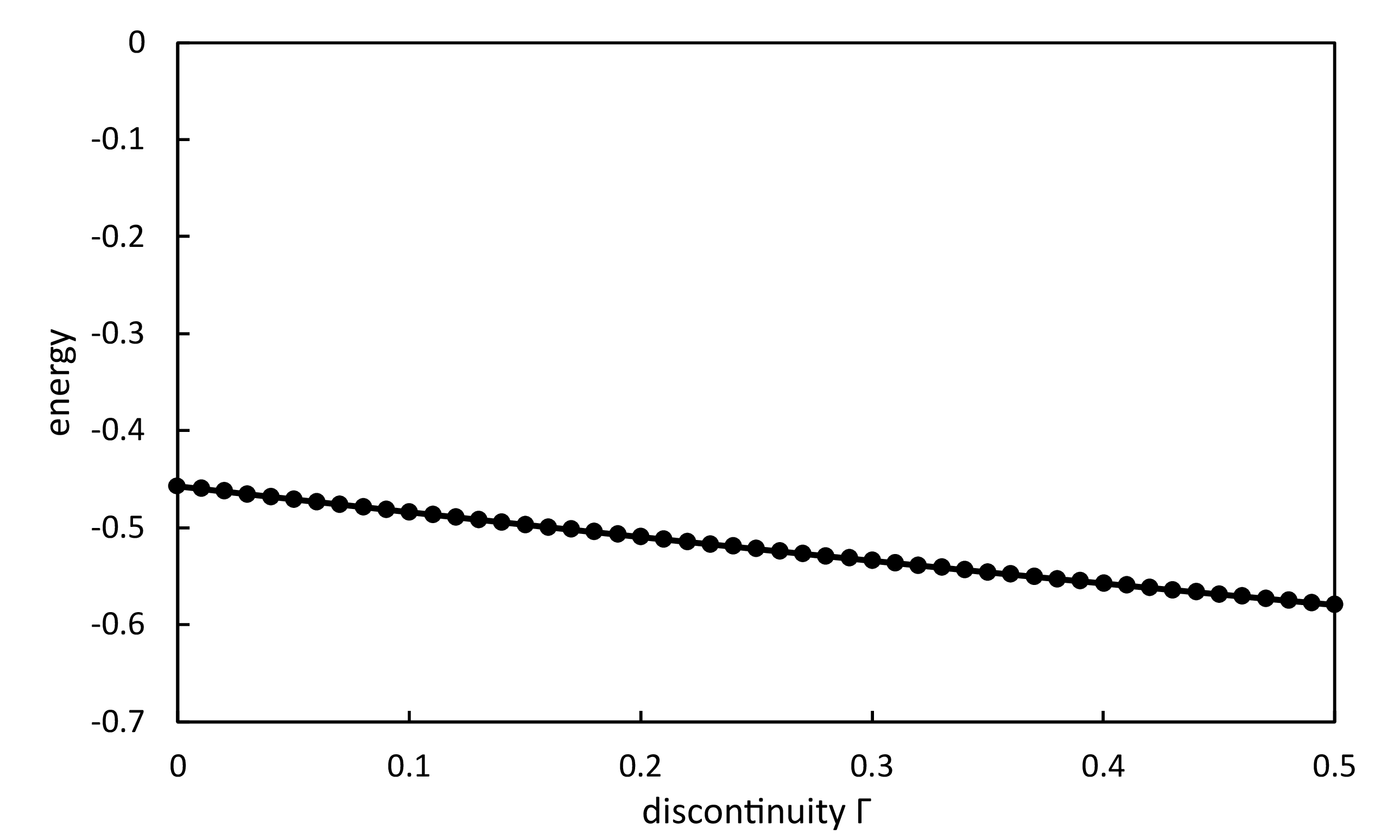}
\caption{Dependence of energy on discontinuity}\label{graph1}
\end{figure}
For example, for $\lVert u|_{\partial\Omega_1}- u|_{\partial\Omega_2}\rVert_{L^2(\partial\Omega_2)}=0.1$ we have
$$\tilde E_1=-0.484263<E_1=-0.457591.$$
Hence the value $\tilde E_1$ can be rather smaller than the true eigenvalue $E_1$. Since by the definition we can easily see that $H^{3/2}$ norm and $L^2$ norm are equivalent for constant valued functions, from \eqref{myeq4.1} we can also see that the order with respect to $\lVert u_{\alpha}|_{\partial\Omega_{\alpha}}- u_{\beta}|_{\partial\Omega_{\beta}}\rVert_{H^{3/2}(\partial\Omega_{\alpha}\cap\partial\Omega_{\beta})}$ in Theorem \ref{mainthm} can not be improved. Note that we investigated only the effect of changes of the coefficients $A$ and $C$, and there are infinitely many ways to change the function $u$ to decrease $\tilde E_1$, if we allow the discontinuity on $\partial\Omega_2$.

\bigskip

\noindent\textbf{Acknowledgment}
This work was supported by JSPS KAKENHI Grant Number JP23K13030.

\def\thesection{\Alph{section}}
\setcounter{section}{0}
\section{Selfadjointness, quadratic form and regularity} 
In this appendix we shall see the reason why a domain of a selfadjoint operator and the form domain of the associated quadratic form are important in evaluation of the eigenvalues of the operator. We also determine the selfadjoint realization of the operator $-\Delta+V$ corresponding to the Bloch function in the unit cell and the form domain of the associated quadratic form. In the last subsection regularity of the Bloch functions is considered.

\subsection{Quadratic form and the Rayleigh-Ritz method}
First we summarize definitions and standard results.
We denote the inner product and the norm in a Hilbert space $X$ by $\langle\cdot,\cdot\rangle$ and $\lVert\cdot\rVert$ respectively. A quadratic form $q$ is called semibounded if there exists $C\in\mathbb R$ such that $q(u,u)\geq C\lVert u\rVert^2$ for any $u\in Q(q)$, where $Q(q)$ is the form domain of $q$ (see e.g. \cite[Section VIII.6]{RS}). An operator $A$ in $X$ is called semibounded if there exists $C\in\mathbb R$ such that $\langle u,Au\rangle\geq C\lVert u\rVert^2$ for any $u\in\mathcal D(A)$, where $\mathcal D(A)$ is the domain of $A$. For the definitions of form core and closed form see e.g. \cite[Section VIII.6]{RS} and \cite[Section VI.1]{Ka}. There exists a one-to-one correspondence between closed semibounded quadratic forms and semibounded selfadjoint operators.

\begin{lemma}[{\cite[Theorem VIII.15 and example 2 in p.277]{RS} and \cite[Theorem 2.1 in Section VI.2]{Ka}}]\label{formop}
Let $q$ be a closed semibounded quadratic form. Then there exists a unique semibounded selfadjoint operator such that
\begin{itemize}
\item[(i)] $\mathcal D(A)\subset Q(q)$ and $q(u,v)=\langle Au,v\rangle$ for every $u\in \mathcal D(A)$ and $v\in Q(q)$;
\item[(ii)] $\mathcal D(A)$ is a form core of $q$;
\item[(iii)] if $u\in Q(q)$, $w\in X$ and $q(u,v)=\langle w,v\rangle$ holds for every $v$ belonging to a form core of $q$, then $u\in\mathcal D(A)$ and $Au=w$.
\end{itemize}
Conversely, if $A$ is a semibounded selfadjoint operator,  there exists a unique closed semibounded quadratic form $q$ such that (i)--(iii) hold.
\end{lemma}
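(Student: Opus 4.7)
The plan is to use the Friedrichs construction, which converts the problem of building a selfadjoint operator into an application of the Riesz representation theorem on an auxiliary Hilbert space. By replacing $q$ with $q(\cdot,\cdot)+(C+1)\langle\cdot,\cdot\rangle$ and later subtracting the shift from $A$, I may assume $q(u,u)\geq \lVert u\rVert^{2}$. The closedness of $q$ is precisely the statement that $Q(q)$ equipped with the inner product $\langle u,v\rangle_{q}:=q(u,v)$ is a Hilbert space, densely and continuously embedded in $X$. This reinterpretation is the conceptual heart of the proof; everything else is bookkeeping.

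Next I would define $A$ by $\mathcal{D}(A):=\{u\in Q(q):\exists w\in X,\ q(u,v)=\langle w,v\rangle\ \text{for all}\ v\in Q(q)\}$ and $Au:=w$. Density of $Q(q)$ in $X$ makes $w$ unique when it exists; symmetry and semiboundedness of $A$, together with condition (i), follow immediately from this definition. To see that $A$ is selfadjoint I would show that its range is all of $X$: given $w\in X$, the functional $v\mapsto \langle w,v\rangle$ is continuous on $(Q(q),\langle\cdot,\cdot\rangle_{q})$ because the embedding into $X$ is continuous, so the Riesz representation theorem produces $u_{w}\in Q(q)$ with $q(u_{w},v)=\langle w,v\rangle$ for every $v\in Q(q)$. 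Hence $u_{w}\in\mathcal{D}(A)$ and $Au_{w}=w$. A symmetric semibounded operator whose range is the whole space is selfadjoint.

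For (ii), I would verify that $\mathcal{D}(A)$ is dense in $Q(q)$ in the form norm: if $v\in Q(q)$ is orthogonal to every $u_{w}\in\mathcal{D}(A)$ with respect to $\langle\cdot,\cdot\rangle_{q}$, then $\langle w,v\rangle=q(u_{w},v)=0$ for every $w\in X$, so $v=0$. Property (iii) then follows because any identity $q(u,v)=\langle w,v\rangle$ holding on a form core extends to all $v\in Q(q)$ by continuity in the form norm. For the converse direction, given a semibounded selfadjoint $A$ I would shift so that $A\geq I$ and use the spectral theorem to define $q(u,v):=\langle A^{1/2}u,A^{1/2}v\rangle$ with $Q(q):=\mathcal{D}(A^{1/2})$; closedness of $q$ and validity of (i)--(iii) are straightforward from functional calculus, and uniqueness on both sides follows from polarization together with the density established in (ii).

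The main obstacle is part (ii), the form core property, since it is what guarantees that the operator built in the forward direction is compatible with the form core condition appearing in (iii) and with the converse construction. As sketched, however, it reduces quickly once $(Q(q),\langle\cdot,\cdot\rangle_{q})$ is recognised as a Hilbert space and the range of $w\mapsto u_{w}$ is interpreted as $\mathcal{D}(A)$; the orthogonal-complement argument then collapses to the non-degeneracy of the inner product on $X$.
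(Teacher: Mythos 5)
The paper does not prove this lemma at all: it is quoted verbatim as a known result, with the proof delegated to the cited references (Reed--Simon, Theorem VIII.15, and Kato, Theorem VI.2.1). Your sketch is precisely the standard textbook argument behind those citations --- shift so that $q(u,u)\geq\lVert u\rVert^2$, view $(Q(q),q)$ as a Hilbert space continuously and densely embedded in $X$, define $A$ by the weak identity $q(u,v)=\langle w,v\rangle$, and obtain surjectivity (hence selfadjointness, via the bound $\lVert u\rVert\leq\lVert Au\rVert$ and symmetry) from the Riesz representation theorem --- and it is sound, including the orthogonal-complement argument for (ii), the density/continuity extension for (iii), and the converse via $q(u,v)=\langle A^{1/2}u,A^{1/2}v\rangle$ on $\mathcal D(A^{1/2})$. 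The only points worth making explicit in a full write-up are that density of $Q(q)$ in $X$ is part of the definition of the form (it is what makes $w$ unique and $\mathcal D(A)$ dense), and that uniqueness of $A$ in the forward direction follows from (iii) applied with the form core $\mathcal D(A')$ of a competing operator $A'$ together with the fact that a selfadjoint operator admits no proper selfadjoint extension.
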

We call the quadratic form $q$ corresponding to $A$ in Lemma \ref{formop} the quadratic form associated with $A$. Conversely, $A$ is called the selfadjoint operator associated with the quadratic form $q$.

The Rayleigh-Ritz method is a useful method to evaluate eigenvalues of selfadjoint operators which can be stated as follows (cf. \cite[Theorem XIII.3]{RS}).
\begin{lemma}\label{Rayleigh-Ritz}
Let $A$ be a semibounded selfadjoint operator that has $M\in\mathbb N$ isolated eigenvalues $E_1,\dots,E_M$ in ascending order at the bottom of its spectrum, and $q$ be the quadratic form associated with $A$. Assume that $u^1,\dots,u^M\in Q(q)$ satisfy $\langle u^i,u^j\rangle=\delta_{ij}$. Define a matrix $H=(h_{ij})$ by $h_{ij}:=q(u^i,u^j)$ and let $\tilde E_1,\dots,\tilde E_M$ be the eigenvalues of $H$ in ascending order. Then
$$E_m\leq \tilde E_m,\ m=1,\dots,M.$$
\end{lemma}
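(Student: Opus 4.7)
The plan is to derive the inequality $E_m \leq \tilde E_m$ from the min-max principle by constructing a convenient $m$-dimensional trial subspace of $Q(q)$ out of the Ritz vectors. Concretely, since $A$ is a semibounded selfadjoint operator whose $M$ lowest spectral points $E_1 \leq \dotsb \leq E_M$ are isolated eigenvalues at the bottom of its spectrum, the min-max theorem (cf.\ \cite[Theorems XIII.1 and XIII.2]{RS4}) gives
$$E_m = \inf_{\substack{W \subset Q(q) \\ \dim W = m}} \sup_{\substack{u \in W \\ \lVert u \rVert = 1}} q(u,u),$$
so it suffices to exhibit an $m$-dimensional subspace $W_m \subset Q(q)$ on which $q(u,u) \leq \tilde E_m$ for every unit vector.

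Next, I would diagonalize the Hermitian matrix $H$. Pick a unitary $Y = (y_{ki})$ with $\bar Y H Y^T = \mathrm{diag}(\tilde E_1, \dots, \tilde E_M)$ and set
$$\varphi^k := \sum_{i=1}^M y_{ki}\, u^i \in Q(q), \qquad k = 1, \dots, M.$$
Using $\langle u^i, u^j \rangle = \delta_{ij}$ and sesquilinearity of $q$, one checks that $\langle \varphi^k, \varphi^l \rangle = (YY^*)_{kl} = \delta_{kl}$ and $q(\varphi^k, \varphi^l) = (Y H Y^*)_{kl} = \tilde E_k\,\delta_{kl}$.

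For fixed $m$, take $W_m := \mathrm{span}(\varphi^1, \dots, \varphi^m)$. Any unit vector $u = \sum_{k=1}^m c_k \varphi^k \in W_m$ satisfies $\sum_{k=1}^m |c_k|^2 = 1$ and, again by sesquilinearity,
$$q(u,u) = \sum_{k=1}^m |c_k|^2\, \tilde E_k \leq \tilde E_m \sum_{k=1}^m |c_k|^2 = \tilde E_m,$$
because $\tilde E_k \leq \tilde E_m$ for $k \leq m$. Combining this with the min-max formula yields $E_m \leq \tilde E_m$.

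The only delicate point is invoking the min-max characterization over the form domain rather than over $\mathcal D(A)$; this is exactly the statement in \cite[Theorem XIII.1]{RS4} applied to the associated quadratic form $q$ provided by Lemma \ref{formop}, and it is crucial here because our trial functions $u^i$ are assumed only to lie in $Q(q)$, not in $\mathcal D(A)$. Everything else reduces to a routine diagonalization argument.
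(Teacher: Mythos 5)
Your proof is correct and follows essentially the same route as the paper: the paper simply cites the standard Rayleigh--Ritz argument of \cite[Theorem XIII.3]{RS4} together with the form-domain version of the min-max principle (\cite[Theorem XIII.2]{RS4}), and your write-up is precisely that argument spelled out -- diagonalize $H$, form the orthonormal Ritz vectors $\varphi^k$ in $Q(q)$, and apply the min-max characterization to $\mathrm{span}(\varphi^1,\dots,\varphi^m)$. No gaps.
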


\begin{remark}
The assumption in \cite[Theorem XIII.3]{RS} corresponds to $u^1,\dots,u^M\newline\in \mathcal D(A)$, but it is easily seen that $\mathcal D(A)$ can be replaced by $Q(q)$ as in Lemma \ref{Rayleigh-Ritz} using \cite[Theorem XIII.2]{RS} instead of \cite[Theorem XIII.1]{RS} in the proof.
\end{remark}

It is of crucial importance to choose the quadratic form together with its form domain associated with the selfadjoint operator whose eigenvalues are sought in the Rayleigh-Ritz method, which can be seen from the following example. Let $I:=(0,\pi)$ be an interval. Then the operator $A:=-\partial_x^2$ in $L^2(I)$ with the domain $H^1_0(I)\cap H^2(I)$ or $\{u\in H^2(I): u'(0)=u'(\pi)=0\}$ is a selfadjoint operator. We denote the operator with the first domain by $A^D$ and that with the latter domain by $A^N$. Then the quadratic form associated with $A^D$ (resp., $A^N$) is $q(u,v):=\langle \partial_xu,\partial_xv\rangle_I$ on the form domain $Q^D(q):=H^1_0(I)$ (resp., $Q^N(q):=H^1(I)$). The first eigenvalue of $A^D$ (resp., $A^N$) is $E_1^D=1$ (resp., $E_1^N=0$) and the corresponding eigenfunction is $c\sin x$ (resp., $c'$), where $c$ and $c'$ are normalization constants. When we seek eigenvalues of $A^D$, we can not use $Q^N(q)$ in the Rayleigh-Ritz method, which can be seen from the fact that $q(c',c')=0< E_1^D$ holds for the eigenfunction $c'\in Q^N(q)$ of $A^N$, and therefore, $q(c',c')$ does not give an upper bound of $E^D_1$. Therefore, we need to use the form domain associated with the selfadjoint operator whose eigenvalues are sought. This is the reason why we need to determine the domain of the selfadjoint operator and the form domain of the associated quadratic form corresponding to the Bloch function in the APW method explicitly.

\subsection{Selfadjoint operator and the associated quadratic form for the Bloch function}
In this subsection we prove selfadjointness of $-\Delta+V$ on $\mathcal D_k$ and determine the quadratic form associated with the operator.
We define an operator $-\Delta^k$ in $L^2(D)$ with the domain 
\begin{align*}
\mathcal D(-\Delta^k)=&\mathcal D_k=\Bigg\{u\in H^2(D): \forall j,\ u(x+a_j)=e^{ik\cdot a_j}u(x),\\
&(G_j\cdot\nabla) u(x+a_j)=e^{ik\cdot a_j}(G_j\cdot\nabla) u(x),\ \mathrm{for}\ x=\sum_{l\neq j}c_la_l,\ 0< c_l< 1\Bigg\},
\end{align*}
by $-\Delta^k u:=-\Delta u$, where the right-hand side is the usual weak derivative. Then we have the following.

\begin{proposition}\label{selfak}
\begin{itemize}
\item[(1)] $-\Delta^k$ is a selfadjoint operator with the compact resolvent.\\
\item[(2)] The quadratic form $\langle \nabla u,\nabla u\rangle_{D}$ with the form domain $Q_k$ is the quadratic form associated with the selfadjoint operator $-\Delta^k$.
\end{itemize}
\end{proposition}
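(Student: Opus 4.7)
My plan is to deduce both (1) and (2) simultaneously by constructing the selfadjoint operator from the quadratic form. First I would verify that $Q_k$ is a closed subspace of $H^1(D)$: the quasi-periodicity conditions $u(x+a_j)=e^{ik\cdot a_j}u(x)$ are linear identities among boundary traces, and closedness follows from continuity of the trace operator $H^1(D)\to H^{1/2}(\partial D)$. Hence the non-negative symmetric form $q(u,v):=\langle \nabla u,\nabla v\rangle_D$ with form domain $Q_k$ is closed: if $u_n\to u$ in $L^2(D)$ and $q(u_n-u_m,u_n-u_m)\to 0$, then $\{u_n\}$ is Cauchy in $H^1(D)$, so its $H^1$-limit is $u$, and $u\in Q_k$ since $Q_k$ is closed in $H^1(D)$.

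By Lemma \ref{formop} there is a unique semibounded selfadjoint operator $A_0$ in $L^2(D)$ associated with $q$. Testing $\langle A_0u,v\rangle_D=q(u,v)$ against $v\in C_0^\infty(D)\subset Q_k$ shows $A_0u=-\Delta u$ in the distribution sense for any $u\in\mathcal D(A_0)$. It remains to identify $\mathcal D(A_0)=\mathcal D_k$, after which $A_0=-\Delta^k$, giving both (1) and (2). Compactness of the resolvent will then follow immediately because $\mathcal D(A_0)\subset Q_k\subset H^1(D)$ and the Rellich--Kondrachov embedding $H^1(D)\hookrightarrow L^2(D)$ is compact ($D$ is bounded).

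For $\mathcal D_k\subset \mathcal D(A_0)$, I would take $u\in\mathcal D_k$ and $v\in Q_k$ and integrate by parts on $D$: $\langle \nabla u,\nabla v\rangle_D=-\langle \Delta u,v\rangle_D+\int_{\partial D}(\partial_\nu u)\bar v\,dS$. The boundary $\partial D$ is a union of pairs of opposite faces, and on each such pair the relevant normal direction is parallel to some $G_j$. Combining the Bloch condition $v(x+a_j)=e^{ik\cdot a_j}v(x)$ for $v\in Q_k$ with $(G_j\cdot\nabla)u(x+a_j)=e^{ik\cdot a_j}(G_j\cdot\nabla)u(x)$ for $u\in\mathcal D_k$ makes the contributions from opposite faces exactly cancel, yielding $q(u,v)=\langle -\Delta u,v\rangle_D$ for every $v\in Q_k$. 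Lemma \ref{formop}(iii) then gives $u\in \mathcal D(A_0)$ with $A_0u=-\Delta u$.

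The reverse inclusion $\mathcal D(A_0)\subset\mathcal D_k$ will be the main obstacle. For $u\in\mathcal D(A_0)$, I would extend $u$ quasi-periodically by $\tilde u(x+T):=e^{ik\cdot T}u(x)$ for every lattice translation $T=l_1a_1+\cdots+l_na_n$. Since $u\in Q_k$, the traces of $\tilde u$ match across every cell face, so by the $l=1$ case of Proposition \ref{conectionpro} we have $\tilde u\in H^1_{\mathrm{loc}}(\mathbb R^n)$; moreover, the extension of $A_0u$ by the same rule lies in $L^2_{\mathrm{loc}}$, and $-\Delta\tilde u$ equals this extension as distributions on $\mathbb R^n$. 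Interior elliptic regularity (e.g.\ \cite[Theorem 8.12]{GT}) applied in a neighborhood of each cell face then upgrades $\tilde u$ to $H^2_{\mathrm{loc}}(\mathbb R^n)$, whence $u\in H^2(D)$ and, automatically, the trace of $G_j\cdot\nabla u$ on the face $c_j=1$ equals $e^{ik\cdot a_j}$ times its trace on $c_j=0$. Thus $u\in \mathcal D_k$, completing the identification. The subtlety is that the quasi-periodic extension is an $H^1$ (and not $H^2$) object a priori, so one must genuinely invoke the PDE $-\Delta\tilde u\in L^2_{\mathrm{loc}}$ rather than pushing $H^2$ traces across; everything else is routine.
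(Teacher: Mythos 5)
Your proof is correct, but it takes the route the paper explicitly sets aside: the paper notes that the proposition can be proved either by elliptic regularity or by Fourier analysis, and then chooses the latter, conjugating by $e^{-ik\cdot x}$, mapping $D$ onto $(0,2\pi)^n$, and diagonalizing the operator by Fourier series so that selfadjointness, the identification of the form domain, and compactness of the resolvent are all read off from an explicit multiplication operator on $l^2(\mathbb Z^n)$ (with $\hat{\mathcal D}_0$ identified with $H^2(\mathbb T^n)$). You instead start from the closed form $q$ on $Q_k$, invoke the representation theorem (Lemma \ref{formop}), and identify the operator domain with $\mathcal D_k$ by integration by parts in one direction and by quasi-periodic extension plus interior elliptic regularity in the other; compactness then comes from Rellich via $\mathcal D(A_0)\subset Q_k\subset H^1(D)$. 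Both work. The Fourier route buys an explicit spectral representation and avoids regularity theory entirely; your route is more robust (it would survive variable coefficients or a less symmetric cell) but concentrates all the difficulty in the domain identification. One step there deserves to be written out: to conclude that $-\Delta\tilde u$ equals the quasi-periodic extension of $A_0u$ as a distribution on all of $\mathbb R^n$---and not merely on each open cell, which would leave room for a single-layer term supported on the faces---you must test against arbitrary $\varphi\in C_0^\infty(\mathbb R^n)$ whose support crosses a face; this is done by folding $\varphi$ into the element $v(x)=\sum_T e^{-ik\cdot T}\varphi(x+T)$ of $Q_k$ and using $q(u,v)=\langle A_0u,v\rangle_D$. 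You correctly flag this as the subtle point but do not carry it out; with that computation supplied, the argument is complete.
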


There are two ways of proofs. One is that based on the elliptic regularity, and the other is that based on the Fourier analysis (i.e. direct construction of a spectral representation). Here we adopt the Fourier analysis. A proof of the selfadjointness of $-\Delta^k$ based on the Fourier analysis was also referred to in \cite{Wi}.

\begin{proof}
We transform $-\Delta$ by the unitary transform of multiplication $e^{-ik\cdot x}$ as $e^{-ik\cdot x}(-\Delta)e^{ik\cdot x}=-(\nabla+ik)^T(\nabla+ik)$. By this transformation the domain $\mathcal D_k$ is transformed into $\mathcal D_0=\{u\in H^2(D): \forall j,\ u(x+a_j)=u(x),\ (G_j\cdot\nabla) u(x+a_j)=(G_j\cdot\nabla) u(x),\ \mathrm{for}\ x=\sum_{l\neq j}c_la_l,\ 0< c_l< 1\}$. Moreover, the quadratic form $\langle\nabla u,\nabla u\rangle_D$ with the form domain $Q_k$ is transformed into $\langle (\nabla+ik)u,(\nabla+ik)u\rangle_D$ with the form domain $Q_0=\{u\in H^1(D): \forall j,\ u(x+a_j)=u(x),\ \mathrm{for}\ x=\sum_{l\neq j}c_la_l,\ 0< c_l< 1\}$.

Let $B=(b_{ij})$ be the $n\times n$ matrix that maps $D$ onto $(0,2\pi)^n$. Then by the transformation of coordinates by $B$, $-(\nabla+ik)^T(\nabla+ik)$ is transformed into $-(\nabla+ik')^TA(\nabla+ik')$, where $A:=BB^T$ is a positive symmetric matrix and $k':=(B^T)^{-1}k$. The domain $\mathcal D_0$ is transformed into $\hat{\mathcal D}_0:=\{u\in H^2((0,2\pi)^n): \forall j,\ u(x)=u(x+2\pi e_j),\ \partial_{x_j} u(x)=\partial_{x_j} u(x+2\pi e_j),\ \mathrm{for}\ x=(x_1,\dots,x_n),\ x_j=0 \}$, where $e_j$ is the unit vector in the direction of the $j$th axis.  We denote by $\hat{\mathcal H}_0$ the operator $-(\nabla+ik')^TA(\nabla+ik')$ with this domain. Moreover, the quadratic form $\langle \nabla u,\nabla u\rangle_D$ with the form domain $Q_0$ is transformed into $\langle (\nabla+ik')u,A(\nabla+ik')u\rangle_{(0,2\pi)^n}$ with the form domain $\hat Q_0:=\{u\in H^1((0,2\pi)^n): \forall j,\ u(x)=u(x+2\pi e_j),\ \mathrm{for}\ x=(x_1,\dots,x_n),\ x_j=0 \}$.

By the Fourier series expansion, $L^2((0,2\pi)^n)$ is transformed into $l^2(\mathbb Z^n)=\bigotimes_{i=1}^n\newline l^2(\mathbb Z)$, where $l^2(\mathbb Z)$ is the set of all bilateral sequences $\{c_j\}_{j=0,\pm1,\pm2,\dots}$ such that $\sum_{j=-\infty}^{\infty}|c_j|^2<\infty$. Denoting the transformation by $\mathcal F:L^2((0,2\pi)^n)\to l^2(\mathbb Z^n)$, $\hat{\mathcal H}_0$ is transformed into the multiplication operator $\mathcal F \hat{\mathcal H}_0\mathcal F^{-1}=-(im+ik')^TA(im+ik')$ with the domain $\{\{a_{m}\}\in l^2(\mathbb Z^n):\{|m|^2a_{m}\}\in l^2(\mathbb Z^n)\}$, where $m=(m_1,\dots,m_n)\in \mathbb Z^n$ and $a_{m}$ is the Fourier coefficient. Here for the correspondence of the domains, note that $\hat{\mathcal D}_0$ is identified with $H^2(\mathbb T^n)$ (recall Proposition \ref{conectionpro}) and that the $H^2((0,2\pi)^n)$-norm for $u\in H^2(\mathbb T^n)$ corresponds to the norm $\sum_{m\in\mathbb Z^n}|a_{m}|^2(1+|m|^2+|m|^4)$, where $\mathbb T^n$ is the $n$-dimensional torus and $a_{m}$ is the Fourier coefficient of $u$. The operator $\mathcal F \hat{\mathcal H}_0\mathcal F^{-1}$ is obviously selfadjoint and associated with the quadratic form $\sum_{m\in\mathbb Z^n}^{\infty}|a_{m}|^2(-im-ik')^TA(im+ik')$ with the form domain $\{\{a_{m}\}\in l^2(\mathbb Z^n):\{|m|a_{m}\}\in l^2(\mathbb Z^n)\}$. Thus the selfadjointness of $-\Delta^k$ is obvious, and noting that the transform of this form domain by $\mathcal F^{-1}$ is $\hat Q_0$ we can see that (2) holds. The compactness of the resolvent follows from the Rellich's theorem, if we note that the image of a bounded set in $L^2((0,2\pi)^n))$ by $(\hat{\mathcal H}_0+1)^{-1}$ can be regarded as a bounded set in $H^2(\mathbb T^n)$. The proof is complete.
\end{proof}

\begin{proposition}
If $V$ is $-\Delta^k$ bounded with the relative bound smaller than $1$, $-\Delta+V$ in $L^2(D)$ with the domain $\mathcal D_k$ is a selfadjoint operator (denoted by $\mathcal H^k$) with the discrete spectrum. The eigenvalues $E_m(k),\ m=1,2,\dots$ of $\mathcal H^k$ satisfy $E_m(k)\to\infty$ as $m\to\infty$. Moreover, if $E_m(k)$ are labeled in ascending order, each $E_m(k)$ depends analytically on $k$ except at points in a real-analytic subset of $\mathbb R^n$ (i.e. subset expressed as zeros of some real-analytic function) on which $E_m(k)$ is degenerated.
\end{proposition}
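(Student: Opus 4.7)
The plan is to verify selfadjointness and compactness of the resolvent by combining Proposition~\ref{selfak} with the Kato--Rellich theorem, and then to obtain analyticity in $k$ by passing to a $k$-independent domain via a unitary gauge transformation and invoking Kato's analytic perturbation theory.

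First I would note that $-\Delta^k$ is selfadjoint on $\mathcal D_k$ with compact resolvent by Proposition~\ref{selfak}. Since $V$ is $-\Delta^k$-bounded with relative bound strictly less than $1$, the Kato--Rellich theorem yields selfadjointness and semiboundedness of $\mathcal H^k=-\Delta^k+V$ on the same domain $\mathcal D_k$, and the graph norm of $\mathcal H^k$ is equivalent to that of $-\Delta^k$, hence (by the closed graph theorem together with (1) of Proposition~\ref{selfak}) to the $H^2(D)$-norm on $\mathcal D_k$. The Rellich--Kondrachov embedding $H^2(D)\hookrightarrow L^2(D)$ then makes the resolvent of $\mathcal H^k$ compact, so the spectrum is purely discrete with eigenvalues $E_m(k)\to\infty$.

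For the dependence on $k$, I would introduce the unitary multiplication $U_k u(x):=e^{-ik\cdot x}u(x)$, which by direct computation maps $\mathcal D_k$ bijectively onto $\mathcal D_0$; on this fixed domain
$$\widetilde{\mathcal H}(k):=U_k^{-1}\mathcal H^k U_k=-\Delta-2ik\cdot\nabla+|k|^2+V,$$
which is a polynomial, hence entire, function of $k\in\mathbb C^n$ with a $k$-independent domain $\mathcal D_0$. Thus $\{\widetilde{\mathcal H}(k)\}_{k\in\mathbb C^n}$ is an analytic family of type (A) in the sense of Kato, selfadjoint for $k\in\mathbb R^n$, with compact resolvent. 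By the Kato--Rellich analytic perturbation theorem (cf.\ \cite[Theorem XII.13]{RS4} or \cite[Chapter VII]{Ka}), near any $k_0\in\mathbb R^n$ the eigenvalues of $\widetilde{\mathcal H}(k)$ (and hence of $\mathcal H^k$, since $U_k$ is unitary) can be labeled as finitely many real-analytic branches; after re-ordering into the ascending sequence $E_m(k)$, analyticity is lost only at points where two branches meet.

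The main obstacle is showing that this crossing set is a \emph{real-analytic} subset of $\mathbb R^n$. To handle this, for each $k_0$ I would enclose a finite cluster of eigenvalues of $\widetilde{\mathcal H}(k_0)$ by a contour $\Gamma\subset\mathbb C$ disjoint from the rest of the spectrum; by analyticity of the resolvent in $k$, the Riesz projector
$$P(k):=\frac{1}{2\pi i}\oint_\Gamma (\widetilde{\mathcal H}(k)-z)^{-1}\,dz$$
is analytic in $k$ with constant finite rank $N$ on a neighbourhood of $k_0$, and the eigenvalues enclosed by $\Gamma$ are the roots of the characteristic polynomial of the $N\times N$ matrix representation of $P(k)\widetilde{\mathcal H}(k)P(k)$ in any analytically chosen orthonormal basis of $\operatorname{Ran} P(k)$ (such bases exist by Kato's construction). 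The coefficients of this characteristic polynomial are real-analytic in $k$, so degeneracy of two eigenvalues is equivalent to vanishing of its discriminant, a nontrivial real-analytic function of $k$. Patching these local descriptions via a countable covering of $\mathbb R^n$ then exhibits the global degeneracy set as a real-analytic subset, outside of which each $E_m(k)$ coincides locally with a single analytic branch and is therefore analytic.
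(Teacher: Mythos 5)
Your proposal is correct and follows essentially the same route as the paper: Kato--Rellich for selfadjointness, compactness of the resolvent inherited from $-\Delta^k$, a unitary gauge transformation to a fixed domain giving an analytic (type (A)) family, and reduction via Riesz projections to the discriminant of a finite-dimensional characteristic polynomial to identify the degeneracy set as a real-analytic subset. The only cosmetic differences are that the paper localizes the gauge transformation at each $k_0$ (mapping to $\mathcal D_{k_0}$ rather than $\mathcal D_0$) and deduces compactness directly from relative boundedness rather than via the $H^2(D)\hookrightarrow L^2(D)$ embedding; also, your claim that the discriminant is \emph{nontrivial} is not needed (and not automatic), since the statement only requires the degeneracy locus to be the zero set of some real-analytic function.
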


\begin{remark}
The analyticity of $E_m(k)$ with respect to $k$ was also proved by \cite[Theorem 1]{Wi} for three-dimensional space $\mathbb R^3$ using the explicit expression of the resolvent kernel of $-\Delta^k$ and the classical Fredholm theory. However, contrary to the comment before Theorem 1 in \cite{Wi}, the analyticity follows easily from the analytic perturbation theory under general settings as in the following proof. As for the exceptional point of the analyticity, existence of a conic singular point for a hexagonal lattice was proved by \cite{Gu}.
\end{remark}

\begin{proof}
By Proposition \ref{selfak}, the relative boundedness, and the Kato-Rellich theorem (see e.g. \cite[Theorem X.12]{RS2}), $-\Delta+V$ is selfadjoint on $\mathcal D_k=\mathcal D(-\Delta^k)$. Moreover, by the relative boundedness and the compactness of the resolvent of $-\Delta^k$, the resolvent of $\mathcal H^k$ is compact, and thus the spectrum of the operator is discrete and the eigenvalues accumulate at $+\infty$ (see e.g. \cite[Theorem XIII.64]{RS4}). As for the analyticity of $E_m(k)$, let $k_0$ be a fixed value. Then by the unitary operator of multiplication by $e^{i(k-k_0)\cdot x}$, $\mathcal H^k$ is transformed into
$$\tilde{\mathcal H}^k:=e^{-i(k-k_0)\cdot x}\mathcal H^ke^{i(k-k_0)\cdot x}=-\Delta-2i(k-k_0)\cdot\nabla+|k-k_0|^2+V,
$$
with the domain $\mathcal D_{k_0}$. Since $2i(k-k_0)\cdot\nabla$ is $-\Delta^k$ bounded, considering the Neumann series of $(\tilde{\mathcal H^k}-z)^{-1}=(\mathcal H^{k_0}-z)^{-1}(I+(\tilde{\mathcal H}^k-\mathcal H^{k_0})(\mathcal H^{k_0}-z)^{-1})^{-1}$, we can see that the resolvent of $\tilde{\mathcal H}^k$ is bounded holomorphic, and thus $\tilde{\mathcal H}^k$ is holomorphic (cf. \cite[Theorem VII-1.3]{Ka}, note that these notions of holomorphy for multivariable $k$ is defined in the same way as in the single variable case). Therefore, by the reduction to the case of a finite-dimensional matrix if we consider the eigenvalues $E_m(k)$ in a bounded open interval $I\subset\mathbb R$ for $k$ in a sufficiently small neighborhood $\mathcal N_{k_0}$ of $k_0$, they are solutions of a characteristic equation $f(k,E)=0$ for $E$, where $f(k,E)$ is a polynomial with respect to $E$ whose coefficients are analytic functions of $k$ (cf. \cite[p.63 and p.370]{Ka}). Hence by the analytic version of the implicit function theorem, we can see that $E_m(k)$ is holomorphic in a sufficiently small neighborhood $\mathcal N_{k_0}$ unless $E_m(k)$ is degenerated at $k_0$ (cf. \cite[p.65]{Ka}). The necessary and sufficient condition that $E_m(k_0)\in I$ is degenerated is that the discriminant $\omega(k)$ of $f(k,E)$ vanishes at $k=k_0$ (cf. \cite[p.101]{Wa}). Since the discriminant $\omega(k)$ is a real-analytic function of $k$, the set $\{k\in\mathcal N_{k_0}:\omega(k)=0\}$ is a real-analytic subset, which completes the proof.
\end{proof}

\begin{proposition}
If $V$ is $-\Delta^k$ bounded with relative bound smaller than $1$ and there exist constants $0\leq a<1,\ b\geq 0$ such that
$$|\langle u,Vu\rangle_D|\leq a\langle \nabla u,\nabla u\rangle_D+b\langle u,u\rangle_D,$$
for any $u\in Q_k$, the selfadjoint operator $\mathcal H^k$ is associated with the quadratic form $\langle\nabla u,\nabla u\rangle_D+\langle u,Vu\rangle_D$ with the form domain $Q_k$.
\end{proposition}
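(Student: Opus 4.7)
The plan is to invoke a form-perturbation argument (KLMN theorem) to construct the selfadjoint operator associated with $q(u,v):=\langle\nabla u,\nabla v\rangle_D+\langle u,Vv\rangle_D$ on $Q_k$, and then identify it with $\mathcal H^k$ by integration by parts on $D$. The starting point is Proposition~\ref{selfak}, which says that $q_0(u,v):=\langle\nabla u,\nabla v\rangle_D$ on $Q_k$ is closed, nonnegative, and associated with $-\Delta^k$. The hypothesis $|\langle u,Vu\rangle_D|\leq a\,q_0(u,u)+b\langle u,u\rangle_D$ with $a<1$ is exactly the condition that $V$ be $q_0$-bounded with relative bound less than $1$. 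By the KLMN theorem (see e.g.\ \cite[Theorem X.17]{RS2}), $q$ is closed and semibounded on $Q_k$, so by Lemma~\ref{formop} there is a unique associated selfadjoint operator $A$.

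Next I would identify $A$ with $\mathcal H^k$. Since both are selfadjoint, it suffices (by maximality) to show $\mathcal D_k\subset\mathcal D(A)$ and $Au=\mathcal H^k u$ on $\mathcal D_k$. By part (iii) of Lemma~\ref{formop}, this reduces to verifying
\begin{equation*}
q(u,v)=\langle \mathcal H^k u,v\rangle_D \quad\text{for all } u\in\mathcal D_k,\ v\in Q_k,
\end{equation*}
since $Q_k$ is trivially a form core of $q$. The $V$-term matches the pairing $\langle Vu,v\rangle_D$ directly, so the task is to show
\begin{equation*}
\langle\nabla u,\nabla v\rangle_D = \langle -\Delta u,v\rangle_D,
\end{equation*}
i.e.\ that the boundary integral $\int_{\partial D}\overline{\partial_\nu u}\,v\,dS$ produced by the standard $H^2$--$H^1$ Green identity vanishes.

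The key technical step, and the main obstacle, is the cancellation of this boundary term using the quasi-periodic conditions that define $\mathcal D_k$ and $Q_k$. I would pair each face $F_j^{0}:=\{\sum_{l\neq j}c_l a_l:0<c_l<1\}$ of $\partial D$ with the opposite face $F_j^{1}:=F_j^{0}+a_j$. Since $G_i\cdot a_j=2\pi\delta_{ij}$, the outward unit normal on $F_j^{0}$ is $-G_j/|G_j|$ and on $F_j^{1}$ is $+G_j/|G_j|$, so $|G_j|\partial_\nu u = \mp(G_j\cdot\nabla)u$ there. Translating $y\in F_j^{0}$ to $y+a_j\in F_j^{1}$ and applying the identities $v(y+a_j)=e^{ik\cdot a_j}v(y)$ (from $Q_k$) and $(G_j\cdot\nabla)u(y+a_j)=e^{ik\cdot a_j}(G_j\cdot\nabla)u(y)$ (from $\mathcal D_k$), the phase factors combine as $\overline{e^{ik\cdot a_j}}e^{ik\cdot a_j}=1$, so the contribution on $F_j^{1}$ becomes exactly the negative of the contribution on $F_j^{0}$. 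Each pair of opposite faces therefore cancels and the total boundary integral is $0$.

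With the identity $q(u,v)=\langle \mathcal H^k u,v\rangle_D$ in hand for all $u\in\mathcal D_k$, $v\in Q_k$, Lemma~\ref{formop}(iii) yields $\mathcal D_k\subset\mathcal D(A)$ and $Au=\mathcal H^k u$ on $\mathcal D_k$. Since $\mathcal H^k$ is already selfadjoint on $\mathcal D_k$ (Kato--Rellich), any selfadjoint extension equals it, hence $A=\mathcal H^k$, completing the proof. The arithmetic of the boundary-term cancellation is where all the structure of the quasi-periodic boundary conditions is used; the remaining pieces are routine applications of the KLMN theorem and of Lemma~\ref{formop}.
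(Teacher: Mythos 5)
Your proof is correct and follows essentially the same route as the paper: apply the KLMN theorem to the perturbation of $q_0(u,v):=\langle\nabla u,\nabla v\rangle_D$ on $Q_k$, identify the resulting operator with $\mathcal H^k$ on $\mathcal D_k$ via Lemma~\ref{formop}(iii), and conclude because a selfadjoint operator admits no proper selfadjoint extension (the paper instead compares the two quadratic forms on the common form core $\mathcal D_k$; both closings are fine). The one place you do genuinely extra work is the identity $\langle\nabla u,\nabla v\rangle_D=\langle-\Delta u,v\rangle_D$ for $u\in\mathcal D_k$, $v\in Q_k$: the paper gets this for free by combining Proposition~\ref{selfak}(2) with Lemma~\ref{formop}(i), so the boundary cancellation is already encoded in the Fourier-series proof of Proposition~\ref{selfak}, whereas you re-derive it by Green's identity with explicit pairing of opposite faces. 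Your cancellation is arithmetically right --- on $F_j^0$ and $F_j^1$ the outward normals are $\mp G_j/|G_j|$ and the quasi-periodic phases combine as $\overline{e^{ik\cdot a_j}}e^{ik\cdot a_j}=1$ --- but note that $\partial D$ is only piecewise smooth, so this route requires the trace theorem and Green identity on a Lipschitz (polyhedral) domain rather than the $C^l$ versions invoked elsewhere in the paper; that technicality is precisely what the Fourier-analytic proof of Proposition~\ref{selfak} sidesteps, which is why the shorter path through Lemma~\ref{formop}(i) is preferable here.
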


\begin{proof}
By the KLMN theorem (see e.g. \cite[Theorem X.17]{RS2}) there exists a selfadjoint operator $\mathcal G^k$ associated with the quadratic form
\begin{equation}\label{myeqa.2}
\langle \nabla u,\nabla v\rangle_D+\langle u,Vv\rangle_D,
\end{equation}
for $u,v\in Q_k$, and $\mathcal D_k$ is a form core for $\mathcal G^k$. By Lemma \ref{formop} (iii) and \eqref{myeqa.2}, we can see that $\mathcal D_k\subset\mathcal D(\mathcal G^k)$ and that $\mathcal G^ku=\mathcal H^ku$ for any $u\in \mathcal D_k$, where $\mathcal D(\mathcal G^k)$ is the domain of $\mathcal G^k$. Since the quadratic form associated with $\mathcal H^k$ also has the form core $\mathcal D_k$, we can see that the quadratic forms associated with $\mathcal G^k$ and $\mathcal H^k$ coincide. This means that $\mathcal G^k=\mathcal H^k$, which completes the proof.
\end{proof}

\subsection{Regularity of the Bloch function}
In the literature, in general the Bloch function is ambiguously referred to as a solution to $-\Delta u+Vu=Eu$ such that $u(x+T)=e^{ik\cdot T}u(x)$ for any $T=l_1a_1+\dotsm + l_na_n,\ l_1,\dots,l_n\in\mathbb Z$. Since the latter condition can be expressed without using the restriction of $u$ to the boundary of the unit cell, we do not need to assume the regularity condition $u\in H^2_{\mathrm{loc}}(\mathbb R^n)$ to define the Bloch function in advance. In this subsection, we prove that if $u\in L^2_{\mathrm{loc}}(\mathbb R^n)$ satisfies $-\Delta u+Vu=Eu$ in the sense of distribution, we have $u\in H^2_{\mathrm{loc}}(\mathbb R^n)$ under an assumption on the potential $V$ which is satisfied by e.g. potentials with Coulomb type singularities. Thus under the assumption, all Bloch functions in the natural sense are obtained as eigenfunctions of selfadjoint operators in the unit cell. This regularity is slightly different from the interior regularity of weak solutions, because we do not assume that $u\in H^1_{\mathrm{loc}}(\mathbb R^n)$ in advance. It should be mentioned here that in \cite{HNS} improved regularity of the eigenfunctions in the unit cell is studied for potentials smooth except at a discrete set on which the potential has Coulomb type singularities.

\begin{proposition}
If there exists a constant $C>0$ such that $\lVert Vu\rVert\leq C(\lVert \nabla u\rVert+\lVert u\rVert)$ for any $u\in H^1(\mathbb R^n)$, then any solution $u\in L^2_{\mathrm{loc}}(\mathbb R^n)$ to $-\Delta u+Vu=Eu$ with some $E\in\mathbb R$ in the sense of distribution satisfies $u\in H^2_{\mathrm{loc}}(\mathbb R^n)$.
\end{proposition}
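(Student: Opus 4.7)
The plan is to carry out a two-step bootstrap $L^2_{\mathrm{loc}}\to H^1_{\mathrm{loc}}\to H^2_{\mathrm{loc}}$ via a cutoff argument on the Fourier side, the whole point being to interpret the distribution $Vu$ carefully using only the relative $H^1$-boundedness assumption. Note first that the hypothesis, applied to $\phi\in C_0^\infty(\mathbb R^n)\subset H^1(\mathbb R^n)$, yields $V\phi\in L^2(\mathbb R^n)$, so $V\in L^2_{\mathrm{loc}}(\mathbb R^n)$. Thus for $u\in L^2_{\mathrm{loc}}$ the pairing $\langle Vu,\phi\rangle:=\langle u,V\phi\rangle$ defines $Vu$ as a distribution, and the equation $-\Delta u+Vu=Eu$ in $\mathcal D'(\mathbb R^n)$ is meaningful.

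The main step is to upgrade $u$ to $H^1_{\mathrm{loc}}$. Fix $\chi\in C_0^\infty(\mathbb R^n)$ and set $w:=\chi u\in L^2(\mathbb R^n)$ (compactly supported). Distributionally the product rule gives
$$-\Delta w=(\Delta\chi)u-2\nabla\cdot(u\nabla\chi)-\chi V u+Ew,$$
and I would check that each term on the right lies in $H^{-1}(\mathbb R^n)$. The first and last are in $L^2(\mathbb R^n)\subset H^{-1}$. For the divergence term, each component of $u\nabla\chi$ is in $L^2(\mathbb R^n)$, so $\nabla\cdot(u\nabla\chi)\in H^{-1}$. The essential point is $\chi V u\in H^{-1}$: choosing $\chi'\in C_0^\infty$ with $\chi'\equiv 1$ on $\operatorname{supp}\chi$, for $\phi\in C_0^\infty(\mathbb R^n)$ I would write
$$|\langle \chi V u,\phi\rangle|=|\langle \chi' u,V(\chi\phi)\rangle|\leq \lVert \chi' u\rVert_{L^2}\lVert V(\chi\phi)\rVert_{L^2}\leq C\lVert \chi' u\rVert_{L^2}\lVert \phi\rVert_{H^1},$$
by the hypothesis applied to $\chi\phi\in H^1$, so $\chi V u$ extends to a bounded linear functional on $H^1(\mathbb R^n)$. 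Hence $\Delta w\in H^{-1}$. Combined with $w\in L^2$, the standard Fourier characterization $(1+|\xi|^2)^{1/2}\widehat w\in L^2$ (obtained by multiplying $|\xi|^2\widehat w\in H^{-1}$ pointwise by $(1+|\xi|^2)^{-1/2}$) yields $w\in H^1(\mathbb R^n)$. As $\chi$ was arbitrary, $u\in H^1_{\mathrm{loc}}(\mathbb R^n)$.

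With $H^1_{\mathrm{loc}}$ in hand the second step is routine. Given another cutoff $\tilde\chi\in C_0^\infty$, choose $\chi\in C_0^\infty$ with $\chi\equiv 1$ on $\operatorname{supp}\tilde\chi$, so that $\tilde\chi V u=\tilde\chi V(\chi u)$ with $\chi u\in H^1(\mathbb R^n)$; the hypothesis then gives $\tilde\chi V u\in L^2(\mathbb R^n)$. Since $u\in H^1_{\mathrm{loc}}$, the product rule now reads
$$-\Delta(\tilde\chi u)=-(\Delta\tilde\chi)u-2\nabla\tilde\chi\cdot\nabla u+\tilde\chi(E-V)u,$$
and the right-hand side lies in $L^2(\mathbb R^n)$. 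Hence $\tilde\chi u\in L^2$ with $\Delta(\tilde\chi u)\in L^2$, so by the standard $H^2$ regularity for $-\Delta$ on $\mathbb R^n$ (again via Fourier), $\tilde\chi u\in H^2(\mathbb R^n)$. Since $\tilde\chi\in C_0^\infty$ is arbitrary, $u\in H^2_{\mathrm{loc}}(\mathbb R^n)$.

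The main obstacle is the first step, where $u$ is not yet known to have any derivatives and therefore the hypothesis on $V$ cannot be applied directly to $u$; the key trick is to transfer the $H^1$ norm onto the test function $\chi\phi$ via the symmetry of the multiplication operator, which is precisely what the $H^{-1}$ bound on $\chi V u$ captures. The rest is a standard elliptic bootstrap.
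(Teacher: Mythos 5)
Your proof is correct. It shares with the paper's proof the one essential idea: since $u$ is not yet known to lie in $H^1$, the relative bound on $V$ cannot be applied to $u$ itself, so one transfers $V$ by symmetry onto a smooth, compactly supported object ($\chi\phi$ in your case; the doubly mollified cutoff $(\eta^2(u*\rho_{\epsilon}))*\rho_{\epsilon}$ in the paper's). But the surrounding machinery is genuinely different. The paper mollifies $u$, derives a uniform-in-$\epsilon$ energy estimate $\lVert\eta\nabla(u*\rho_{\epsilon})\rVert^2\leq C$, extracts a weakly convergent subsequence of $\partial_{x_i}(u*\rho_{\epsilon_k})$, and identifies the weak limit with the distributional derivative; you instead show directly that $\Delta(\chi u)\in H^{-1}(\mathbb R^n)$ and invoke the Fourier characterization of $H^1$, with no mollification and no compactness argument. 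Your route is shorter and cleaner for the constant-coefficient Laplacian on $\mathbb R^n$, since the multiplier identity $(1+|\xi|^2)^{1/2}\leq(1+|\xi|^2)^{-1/2}+(1+|\xi|^2)^{-1/2}|\xi|^2$ does all the work that weak compactness does in the paper; the mollifier argument is more robust and would carry over to settings where the global Fourier transform is unavailable (variable coefficients, domains). Your second step ($H^1_{\mathrm{loc}}\to H^2_{\mathrm{loc}}$) is the standard one and matches what the paper only sketches with ``in a similar way.'' Two small points worth making explicit if you write this up: the preliminary observation that $V\in L^2_{\mathrm{loc}}$ (so that the pointwise product $Vu\in L^1_{\mathrm{loc}}$ and your duality definition of $Vu$ agree) deserves a sentence, and the passage from boundedness of $\phi\mapsto\langle\chi Vu,\phi\rangle$ on $C_0^{\infty}$ to membership in $H^{-1}$ uses the density of $C_0^{\infty}(\mathbb R^n)$ in $H^1(\mathbb R^n)$, which holds on all of $\mathbb R^n$ but should be cited.
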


\begin{proof}
Let $\Omega\subset\mathbb R^n$ be a bounded region. Let us prove $u\in H^1(\Omega)$. We use the mollifier. Let $\rho\in C_0^{\infty}(\mathbb R^n)$ be a function such that $\rho(x)\geq 0$, $\rho(x)=0$ for $|x|\geq 1$, $\int_{\mathbb R^n}\rho(x)=1$ and $\rho(-x)=\rho(x)$. For $\epsilon>0$ we define $\rho_{\epsilon}$ by $\rho_{\epsilon}:=\epsilon^{-n}\rho(\frac{x}{\epsilon})$. For any $\eta\in C_0^{\infty}(\mathbb R^n)$ such that $\eta(x)=1$ for $x\in \Omega$. we have
\begin{align*}
\lVert\eta\nabla(u*\rho_{\epsilon})\rVert^2&=-\langle u*\rho_{\epsilon},\eta^2\Delta(u*\rho_{\epsilon})\rangle-\langle u*\rho_{\epsilon},2\eta(\nabla\eta)\cdot\nabla(u*\rho_{\epsilon})\rangle\\
&\leq \langle(\eta^2(u*\rho_{\epsilon}))*\rho_{\epsilon},(E-V)u\rangle+C_1\lVert (\tilde\eta u)*\rho_{\epsilon}\rVert^2\\
&\quad+c\lVert\eta\nabla(u*\rho_{\epsilon})\rVert^2\\
&\leq (C+|E|)(\lVert \nabla((\eta^2(u*\rho_{\epsilon}))*\rho_{\epsilon})\rVert+\lVert ((\eta^2(u*\rho_{\epsilon}))*\rho_{\epsilon})\rVert)\lVert \tilde \eta u\rVert\\
&\quad+C_2\lVert\tilde \eta u\rVert+c\lVert\eta\nabla(u*\rho_{\epsilon})\rVert^2\\
&\leq C_3\lVert\tilde \eta u\rVert + 2c\lVert\eta\nabla(u*\rho_{\epsilon})\rVert^2,
\end{align*}
where $\tilde \eta\in C_0^{\infty}(\mathbb R^n)$ satisfies $\tilde \eta(x)=1$ for $x\in \{x:x=y+z,\ y\in\mathrm{supp}\, \eta,\ |z|\leq \epsilon\}$, and $C_i>0\ ,i=1,2,3$ and $0<c<1/2$ are constants. Thus we obtain $\lVert\eta\nabla(u*\rho_{\epsilon})\rVert^2\leq (1-2c)^{-1}C_3\lVert\tilde \eta u\rVert$. Therefore, there exists a sequence $\epsilon_k$ such that $\partial_{x_{i}}(u*\rho_{\epsilon_k})$ converges weakly to some $v_i\in L^2(\Omega)$ in $\Omega$. Thus $\langle\partial_{x_{i}}(u*\rho_{\epsilon_k}),\varphi\rangle_{\Omega}\to\langle v_i,\varphi\rangle_{\Omega}$ for $\varphi\in C_0^{\infty}(\Omega)$. On the other hand, the left-hand side converges obviously to $-\langle u,\partial_{x_i}\varphi\rangle_{\Omega}$. Hence $u$ is weakly differentiable and $\partial_{x_i}u=v_i$ in $\Omega$, which implies $u\in H^1_{\mathrm{loc}}(\mathbb R^n)$. In a similar way, we can also prove that $u\in H^2_{\mathrm{loc}}(\mathbb R^n)$, which completes the proof.
\end{proof}

\bibliographystyle{plain}
\bibliography{sn-bibliography}

\end{document}